\documentclass{amsart}

\usepackage[left=1.5in,top=1.0in,right=1.5in]{geometry}
\usepackage{amsthm}
\usepackage{amsmath}
\usepackage{amssymb}
\usepackage{tikz}
\usetikzlibrary{decorations.text}

\usepackage{wrapfig}
\usepackage[all]{xy}
\usepackage{graphicx}
\usetikzlibrary{patterns}
\usetikzlibrary{calc}
\usepackage{mathtools}
\usepackage{enumitem}
\usepackage{bm}
\usepackage{subcaption}
\include{preamble}

\newtheorem{theorem}{Theorem}[section]
\newtheorem{corollary}[theorem]{Corollary}
\newtheorem{lemma}[theorem]{Lemma}
\newtheorem{proposition}[theorem]{Proposition}

\newtheorem{question}[theorem]{Question}

\newtheorem{notation}[theorem]{Notation}
\newtheorem{setup}[theorem]{Setup}
\newtheorem{algorithm}[theorem]{Algorithm}

\theoremstyle{remark}
\newtheorem{example}[theorem]{Example}
\newtheorem{remark}[theorem]{Remark}

\theoremstyle{definition}
\newtheorem{definition}[theorem]{Definition}

\newcommand{\RR}{\mathbb{R}}

\newcommand{\xx}{\textbf{x}}

\newcommand{\ee}{\textbf{e}}
\newcommand{\ff}{\textbf{f}}
\newcommand{\yy}{\textbf{y}}
\newcommand{\bfa}{\textbf{a}}
\newcommand{\bb}{\textbf{b}}

\newcommand{\ww}{\textbf{w}}
\newcommand{\A}{\textrm{A}}
\newcommand{\conv}{\textrm{conv}}

\newcommand{\BB}{{\mathcal B}}
\newcommand{\OO}{{\mathcal O}}
\newcommand{\Br}{\textrm{Br}}

\newcommand{\cT}{{\mathcal T}}
\newcommand{\cS}{{\mathcal S}}
\newcommand{\cD}{{\mathcal D}}

\def\fS{ {\mathfrak{S}}}

\def\Type{ {\operatorname{Type}}}
\def\Def{\operatorname{Def}}
\def\Nef{\operatorname{Nef}}
\def\Perm{\operatorname{Perm}}
\def\ncone{\operatorname{ncone}}

\def\card{\operatorname{card}}
\def\ch{\operatorname{ch}}
\def\1{ {\bf{1}}}
\def\balpha{ {\bm \alpha}}
\def\bbeta{ {\bm \beta}}

\def\ren{\!\includegraphics[scale=0.7]{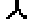}}

\numberwithin{equation}{section}

\begin{document}

\title{Deformation Cones of nested Braid fans}
\author{Federico Castillo and Fu Liu}
\keywords{deformation cone, Braid fan, generalized permutohedra, Submodularity Theorem, nested permutohedra} 

\address{Federico Castillo, Department of Mathematics, University of Kansas, 405 Snow Hall, 1460 Jayhawk Blvd, Lawrence, KS 66045 USA.}
\email{efecastillo.math@gmail.com}
\address{Fu Liu, Department of Mathematics, University of California, Davis, One Shields Avenue, Davis, CA 95616 USA.}
\email{fuliu@math.ucdavis.edu}
\begin{abstract}

Generalized permutohedra are deformations of regular permutohedra, and arise in many different fields of mathematics. One important characterization of generalized permutohedra is the Submodularity Theorem, which is related to the deformation cone of the Braid fan. We lay out general techniques for determining deformation cones of a fixed polytope and apply it to the Braid fan to obtain a natural combinatorial proof for the Submodularity Theorem.

We also consider a refinement of the Braid fan, called the nested Braid fan, and construct usual (respectively, generalized) nested permutohedra which have the nested Braid fan as (respectively, a coarsening of) their normal fan. We extend many results on generalized permutohedra to this new family of polytopes, including a one-to-one correspondence between faces of nested permutohedra and chains in ordered partition posets, and a theorem analogous to the Submodularity Theorem. Finally, we show that the nested Braid fan is the barycentric subdivision of the Braid fan, which gives another way to construct this new combinatorial object.

\end{abstract}

\maketitle

\section{Introduction}

Let $V$ be a finite dimensional real vector space, whose dimension we will always denote by $d.$ The \emph{dual space} $W$ of $V$ is another real vector space together with a perfect pairing $\langle\cdot,\cdot\rangle:W\times V \to \RR$.
%
A \emph{polyhedron} $P \subset V$ is the solution set of a finite set of linear inequalities:
\begin{equation}\label{ineq}
P = \{ \xx \in V \ : \ \langle \bfa_i, \xx \rangle \le b_i, \ i\in I \},
\end{equation}
where $\bfa_i$ are elements in $W$ and $b_i \in \RR$ and $I$ is a finite set of indices. By choosing bases, we can abbreviate the above system of linear inequalities as 
\begin{equation}\label{matrix}
\A \xx \le \bb,
\end{equation}
where $\A$ is the matrix whose row vectors are $\bfa_i$'s and $\bb$ is the vector with components $b_i$'s. 
A \emph{polytope} is a bounded polyhedron. A $k$-dimensional polytope $P\subset V$ is \emph{simple} if each vertex lies on exactly $k$ facets.  

In this paper, we want to study special cases of the following question: For a fixed polytope $P_0 \subset V,$ how do we characterize all ``deformations'' of $P_0$?  
In the literature, there are different equivalent definitions for what we call deformations. The initial approach we take here is to move facets of $P_0$ without passing a vertex (See Definition \ref{defn:deform0}). We also make use of an alternative definition in terms of normal fans; deformations correspond to coarsenings of the normal fan of $P_0$ (See Proposition \ref{prop:deform}). Lastly, we want to mention that this notion is equivalent (via Shephard's theorem \cite[Chapter 15, Theorem 2]{grunbaum}) to ``weak Minkowski summands'', which is central to McMullen's work on the polytope algebra (See \cite{mcmullen}).

One important family of polytopes for this paper is the family of \emph{generalized permutohedra}, which was originally introduced by Postnikov \cite[Definition 6.1]{post} as deformations of usual permutohedra. 
Generalized permutohedra contain many previously known interesting families of polytopes, including Stanley-Pitman polytopes \cite{stanley-pitman} and matroid polytopes \cite{ardila}. 
However, it turns out generalized permutohedra are translations of polymatroids (see Theorem \ref{thm:polymatroid}), which have been studied since the 70's. Polymatroids were initially defined in the context of optimization, in particular the greedy algorithm. See Edmonds' survey \cite{edmonds}, or Fujishige's book \cite{fujishige} for a more recent perspective. 
Since Postnikov's work \cite{post}, generalized permutohedra have received much research attention in the last ten years (see for example \cite{PosReiWil}, \cite{suho}, \cite{zele}). More recently, relations with Hopf monoids have been developed \cite{aguiar}.

The motivation of this article comes from two questions related to generalized permutohedra. We will discuss them in two parts below.

\subsection*{Submodularity Theorem}
One well-known result on generalized permutohedra is the Submodularity Theorem. 

\begin{definition}\label{defn:submod}
	Let $E$ be a finite set. A \emph{submodular function} is a set function $f: 2^E \to \RR$ satisfying
	\[ f(S \cup T) + f(S \cap T) \le f(S) + f(T), \quad \forall S, T \subseteq E.\]
\end{definition}

\begin{theorem}[Submodularity Theorem] \label{thm:submodular}
	There exists a bijection between generalized permutohedra of dimension at most $d$ and submodular functions $f$ on $2^{[d+1]}$ satisfying $f(\emptyset)=0$. (Here $[d+1]=\{1,2,\dots,d+1\}$.) 
\end{theorem}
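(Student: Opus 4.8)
The plan is to exhibit the bijection concretely through support functions, identifying the deformation cone of the standard permutohedron with the cone of submodular functions vanishing at $\emptyset$. Write $\Perm=\conv\{(\sigma(1),\dots,\sigma(d+1)):\sigma\in\fS_{d+1}\}\subset\RR^{d+1}$ and let $\Br$ denote its normal fan, the Braid fan. By definition the generalized permutohedra of dimension at most $d$ are the deformations of $\Perm$, so by Proposition~\ref{prop:deform} they are precisely the polytopes $P$ whose normal fan coarsens $\Br$; equivalently, their support functions $h_P$ are exactly the convex piecewise-linear functions that are linear on each maximal cone of $\Br$. Such a function is determined by its values on the rays of $\Br$ together with linearity on cones, and I will send $P$ to the set function $f_P(S):=h_P(\ee_S)$, where $\ee_S:=\sum_{i\in S}\ee_i$ (so automatically $f_P(\emptyset)=0$ and $f_P([d+1])=h_P(\1)$), and show that $P\mapsto f_P$ is a bijection onto submodular functions $f$ with $f(\emptyset)=0$.

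First I would record the combinatorics of $\Br$. Its maximal cones are the chambers $C_{(i_1,\dots,i_{d+1})}=\{u:u_{i_1}\ge\cdots\ge u_{i_{d+1}}\}$, each with lineality line $\RR\1$; the extreme rays of $C_{(i_1,\dots,i_{d+1})}$ are $\ee_{\{i_1\}},\ee_{\{i_1,i_2\}},\dots,\ee_{\{i_1,\dots,i_d\}}$, so the rays of $\Br$ are exactly the $\ee_S$ for $\emptyset\ne S\subsetneq[d+1]$. Two chambers related by the adjacent transposition of positions $k$ and $k+1$ meet along the wall where $u_{i_k}=u_{i_{k+1}}$; they have all extreme rays in common except $\ee_{A\cup\{i_k\}}$ versus $\ee_{A\cup\{i_{k+1}\}}$, where $A=\{i_1,\dots,i_{k-1}\}$, and one has the single linear relation $\ee_{A\cup\{i_k\}}+\ee_{A\cup\{i_{k+1}\}}=\ee_A+\ee_{A\cup\{i_k,i_{k+1}\}}$ (reading $\ee_\emptyset:=0$, and $\ee_{[d+1]}=\1$ in the borderline cases $A=\emptyset$ or $k=d$).

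Injectivity of $P\mapsto f_P$ is immediate, since two deformations of $\Perm$ agreeing on every ray of $\Br$ have equal support functions. For the forward implication I would use that support functions are subadditive: given $S,T\subseteq[d+1]$, the vectors $\ee_{S\cap T}$ and $\ee_{S\cup T}$ lie in a common chamber (list the elements of $S\cap T$, then those of $(S\cup T)\setminus(S\cap T)$, then the rest), so $h_P$ is linear there, and combining this with the identity $\ee_S+\ee_T=\ee_{S\cup T}+\ee_{S\cap T}$ gives $f_P(S\cup T)+f_P(S\cap T)=h_P(\ee_S+\ee_T)\le h_P(\ee_S)+h_P(\ee_T)=f_P(S)+f_P(T)$, so $f_P$ is submodular. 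Conversely, given a submodular $f$ with $f(\emptyset)=0$, I would build a piecewise-linear function $g$ on $\Br$ by interpolating the prescribed ray values $f(S)$ linearly on each chamber, with value $f([d+1])$ in the lineality direction $\1$. It is well defined because, along any wall, the two incident linear pieces are prescribed to agree on all rays of the wall (by the analysis above they differ only on the single ray lying off the wall); and the convexity condition across that wall reads precisely $f(A\cup\{i_k\})+f(A\cup\{i_{k+1}\})\ge f(A)+f(A\cup\{i_k,i_{k+1}\})$, which is one of the submodular inequalities for $f$. Since a piecewise-linear function that is linear on the maximal cones of a complete fan and convex across every wall is convex, $g=h_P$ for a unique polytope $P$; as $g$ is linear on every chamber, the normal fan of $P$ coarsens $\Br$, so $P$ is a generalized permutohedron with $f_P=f$ (and $P\subseteq\{x:\sum_i x_i=f([d+1])\}$ since $g(\pm\1)=\pm f([d+1])$). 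Hence $P\mapsto f_P$ is the asserted bijection.

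The part requiring the most care is the converse half of the last step: that the chamberwise interpolation $g$ is well defined and genuinely convex. Well-definedness is just the ray/wall bookkeeping above, and convexity rests on the standard principle — part of the deformation-cone framework around Proposition~\ref{prop:deform} — that for a complete fan, convexity across each wall implies global convexity. What makes the proof ``combinatorial,'' as promised, is that the wall-crossing inequalities of the Braid fan are \emph{exactly} the elementary submodular inequalities $f(A\cup i)+f(A\cup j)\ge f(A)+f(A\cup\{i,j\})$: in the converse one never has to derive general submodular inequalities from the elementary ones, and in the forward direction subadditivity of $h_P$ bypasses the issue entirely. The remaining matters — the lineality direction and the value $f([d+1])$, and the translation ambiguity of deformations — are routine.
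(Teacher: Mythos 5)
Your proposal is correct, and at its core it is the same proof as the paper's: both arguments determine the deformation cone of the Braid fan by identifying the pairs of adjacent chambers, observing that the unique linear relation among their spanning rays is $\ee_{A\cup\{i\}}+\ee_{A\cup\{j\}}=\ee_{A}+\ee_{A\cup\{i,j\}}$, and concluding that the cone is cut out exactly by the elementary (diamond) submodular inequalities — your wall-crossing convexity conditions are the support-function restatement of the inequalities $I_{\{\sigma,\sigma'\}}(\bb)$ in Proposition \ref{prop:reduxfan}, which is how Theorem \ref{thm:centralsub} is proved. There are two genuine points of divergence. In the forward direction you obtain the submodular inequality for arbitrary $S,T$ in one stroke from subadditivity of $h_P$ plus the observation that $\ee_{S\cap T}$ and $\ee_{S\cup T}$ lie in a common chamber; the paper proves only the diamond inequalities and then recovers the general case by induction on $|S\cup T|-|S\cap T|$ (Remark \ref{rem:equivsub}), so your route is slightly cleaner there. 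In the converse direction you cite as a black box the principle that linearity on chambers plus convexity across every wall of a complete fan implies global convexity; the paper does not assume this but proves the corresponding statement from scratch via the path argument in the implication $(4)\Rightarrow(2)$ of Proposition \ref{prop:preredux}, so a fully self-contained write-up would still need to supply that step. The remaining bookkeeping — the lineality direction $\1$, the value $f([d+1])$, and the reduction to the centralized case — matches the translation argument in Theorem \ref{thm:submodrestate}.
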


Even though the Submodularity Theorem was known well before the original definition of generalized permutohedra was given by Postnikov, we couldn't find a direct reference for the statement and proof. Research papers commonly cite to \cite{post} and \cite{PosReiWil}; but it is written in neither. 
In \cite{ranktest} it appears as Proposition 15; but only the proof for one direction of the statement is provided. 
The standard classic proof we can find is in \cite[Chapter 44, Theorem 44.3]{schrijver} which has the statement in terms of polymatroids. However, the proof uses ideas from optimization, and we could not find a place that gives a clear statement of the connection between polymatroids and generalized permutohedra. 
Hence, it is still interesting to find a natural combinatorial proof for the Submodularity Theorem. 

In \cite{PosReiWil}, the authors give several equivalent definitions for generalized permutohedra, one of which states that generalized permutohedra are precisely translations of polytopes whose normal fans are coarsenings of the ``Braid fan'' $\Br_d$, which is the normal fan of the ``centralized regular permutohedron'' $\widetilde{\Pi_d}.$ (See Proposition \ref{prop:coarser}.)
As a consequence, the Submodularity Theorem is closely related to the characterization for the deformation cone of the polytope $\widetilde{\Pi_d}$ or the fan $\Br_d.$

Having this in mind, we consider the question of determining deformation cones of a general polytope $P_0$ in Section \ref{sec:prel}. 
After providing a precise definition for deformations of $P_0$ using the idea of ``moving facets without passing vertices'', we derive general techniques for computing the deformation cone of $P_0,$ using which we provide in Section \ref{sec:GP} a new combinatorial proof for Theorem \ref{thm:submodular}. 
After the notation and machinery is introduced, the proof flows naturally, which is an indication that the techniques laid out in Section \ref{sec:prel} are a good way of attacking this kind of problem. 
Another consequence of our techniques is a proof for the connection between polymatroids and generalized permutohedra.

\subsection*{The nested Braid fan} 
One characterization for generalized permutohedra is that all the edge directions are in the form of $\ee_i - \ee_j$ (See Remark \ref{rem:edges}). However, if one tries to move some facet passing a vertex, edge directions in the form of $\ee_i+\ee_j - \ee_k - \ee_\ell$ can appear. 
Therefore, we ask whether the family of generalized permutohedra can be generalized further to allow these edge directions. 
This was the original motivation for the work described in Section \ref{sec:NBF} of this article.

	\begin{figure}[t]
\begin{center}
\begin{tikzpicture}%

	\begin{scope}%
	[x={(0.249656cm, -0.577639cm)},
	y={(0.777700cm, -0.358578cm)},
	z={(-0.576936cm, -0.733318cm)},
	scale=0.300000,
	back/.style={loosely dotted, thin},
	edge/.style={color=blue!95!black, thick},
	facet/.style={fill=blue!95!black,fill opacity=0.300000},
	vertex/.style={inner sep=1pt,circle}]
%
%
\coordinate (-8.00000, -4.00000, 0.00000) at (-8.00000, -4.00000, 0.00000);
\coordinate (-8.00000, 0.00000, -4.00000) at (-8.00000, 0.00000, -4.00000);
\coordinate (-8.00000, 0.00000, 4.00000) at (-8.00000, 0.00000, 4.00000);
\coordinate (-8.00000, 4.00000, 0.00000) at (-8.00000, 4.00000, 0.00000);
\coordinate (-4.00000, -8.00000, 0.00000) at (-4.00000, -8.00000, 0.00000);
\coordinate (-4.00000, 0.00000, -8.00000) at (-4.00000, 0.00000, -8.00000);
\coordinate (-4.00000, 0.00000, 8.00000) at (-4.00000, 0.00000, 8.00000);
\coordinate (-4.00000, 8.00000, 0.00000) at (-4.00000, 8.00000, 0.00000);
\coordinate (0.00000, -8.00000, -4.00000) at (0.00000, -8.00000, -4.00000);
\coordinate (0.00000, -8.00000, 4.00000) at (0.00000, -8.00000, 4.00000);
\coordinate (0.00000, -4.00000, -8.00000) at (0.00000, -4.00000, -8.00000);
\coordinate (0.00000, -4.00000, 8.00000) at (0.00000, -4.00000, 8.00000);
\coordinate (0.00000, 4.00000, -8.00000) at (0.00000, 4.00000, -8.00000);
\coordinate (0.00000, 4.00000, 8.00000) at (0.00000, 4.00000, 8.00000);
\coordinate (0.00000, 8.00000, -4.00000) at (0.00000, 8.00000, -4.00000);
\coordinate (0.00000, 8.00000, 4.00000) at (0.00000, 8.00000, 4.00000);
\coordinate (4.00000, -8.00000, 0.00000) at (4.00000, -8.00000, 0.00000);
\coordinate (4.00000, 0.00000, -8.00000) at (4.00000, 0.00000, -8.00000);
\coordinate (4.00000, 0.00000, 8.00000) at (4.00000, 0.00000, 8.00000);
\coordinate (4.00000, 8.00000, 0.00000) at (4.00000, 8.00000, 0.00000);
\coordinate (8.00000, -4.00000, 0.00000) at (8.00000, -4.00000, 0.00000);
\coordinate (8.00000, 0.00000, -4.00000) at (8.00000, 0.00000, -4.00000);
\coordinate (8.00000, 0.00000, 4.00000) at (8.00000, 0.00000, 4.00000);
\coordinate (8.00000, 4.00000, 0.00000) at (8.00000, 4.00000, 0.00000);
\draw[edge,back] (-4.00000, -8.00000, 0.00000) -- (0.00000, -8.00000, -4.00000);
\draw[edge,back] (-4.00000, 0.00000, -8.00000) -- (0.00000, -4.00000, -8.00000);
\draw[edge,back] (0.00000, -8.00000, -4.00000) -- (0.00000, -4.00000, -8.00000);
\draw[edge,back] (0.00000, -8.00000, -4.00000) -- (4.00000, -8.00000, 0.00000);
\draw[edge,back] (0.00000, -8.00000, 4.00000) -- (4.00000, -8.00000, 0.00000);
\draw[edge,back] (0.00000, -4.00000, -8.00000) -- (4.00000, 0.00000, -8.00000);
\draw[edge,back] (0.00000, 4.00000, -8.00000) -- (4.00000, 0.00000, -8.00000);
\draw[edge,back] (4.00000, -8.00000, 0.00000) -- (8.00000, -4.00000, 0.00000);
\draw[edge,back] (4.00000, 0.00000, -8.00000) -- (8.00000, 0.00000, -4.00000);
\draw[edge,back] (8.00000, -4.00000, 0.00000) -- (8.00000, 0.00000, -4.00000);
\draw[edge,back] (8.00000, -4.00000, 0.00000) -- (8.00000, 0.00000, 4.00000);
\draw[edge,back] (8.00000, 0.00000, -4.00000) -- (8.00000, 4.00000, 0.00000);
\node[vertex] at (4.00000, 0.00000, -8.00000)     {};
\node[vertex] at (8.00000, 0.00000, -4.00000)     {};
\node[vertex] at (8.00000, -4.00000, 0.00000)     {};
\node[vertex] at (0.00000, -8.00000, -4.00000)     {};
\node[vertex] at (0.00000, -4.00000, -8.00000)     {};
\node[vertex] at (4.00000, -8.00000, 0.00000)     {};
\fill[facet] (8.00000, 4.00000, 0.00000) -- (4.00000, 8.00000, 0.00000) -- (0.00000, 8.00000, 4.00000) -- (0.00000, 4.00000, 8.00000) -- (4.00000, 0.00000, 8.00000) -- (8.00000, 0.00000, 4.00000) -- cycle {};
\fill[facet] (-8.00000, 4.00000, 0.00000) -- (-8.00000, 0.00000, -4.00000) -- (-8.00000, -4.00000, 0.00000) -- (-8.00000, 0.00000, 4.00000) -- cycle {};
\fill[facet] (4.00000, 0.00000, 8.00000) -- (0.00000, -4.00000, 8.00000) -- (-4.00000, 0.00000, 8.00000) -- (0.00000, 4.00000, 8.00000) -- cycle {};
\fill[facet] (0.00000, 8.00000, -4.00000) -- (-4.00000, 8.00000, 0.00000) -- (-8.00000, 4.00000, 0.00000) -- (-8.00000, 0.00000, -4.00000) -- (-4.00000, 0.00000, -8.00000) -- (0.00000, 4.00000, -8.00000) -- cycle {};
\fill[facet] (0.00000, 8.00000, 4.00000) -- (-4.00000, 8.00000, 0.00000) -- (-8.00000, 4.00000, 0.00000) -- (-8.00000, 0.00000, 4.00000) -- (-4.00000, 0.00000, 8.00000) -- (0.00000, 4.00000, 8.00000) -- cycle {};
\fill[facet] (4.00000, 8.00000, 0.00000) -- (0.00000, 8.00000, -4.00000) -- (-4.00000, 8.00000, 0.00000) -- (0.00000, 8.00000, 4.00000) -- cycle {};
\fill[facet] (0.00000, -4.00000, 8.00000) -- (-4.00000, 0.00000, 8.00000) -- (-8.00000, 0.00000, 4.00000) -- (-8.00000, -4.00000, 0.00000) -- (-4.00000, -8.00000, 0.00000) -- (0.00000, -8.00000, 4.00000) -- cycle {};
\draw[edge] (-8.00000, -4.00000, 0.00000) -- (-8.00000, 0.00000, -4.00000);
\draw[edge] (-8.00000, -4.00000, 0.00000) -- (-8.00000, 0.00000, 4.00000);
\draw[edge] (-8.00000, -4.00000, 0.00000) -- (-4.00000, -8.00000, 0.00000);
\draw[edge] (-8.00000, 0.00000, -4.00000) -- (-8.00000, 4.00000, 0.00000);
\draw[edge] (-8.00000, 0.00000, -4.00000) -- (-4.00000, 0.00000, -8.00000);
\draw[edge] (-8.00000, 0.00000, 4.00000) -- (-8.00000, 4.00000, 0.00000);
\draw[edge] (-8.00000, 0.00000, 4.00000) -- (-4.00000, 0.00000, 8.00000);
\draw[edge] (-8.00000, 4.00000, 0.00000) -- (-4.00000, 8.00000, 0.00000);
\draw[edge] (-4.00000, -8.00000, 0.00000) -- (0.00000, -8.00000, 4.00000);
\draw[edge] (-4.00000, 0.00000, -8.00000) -- (0.00000, 4.00000, -8.00000);
\draw[edge] (-4.00000, 0.00000, 8.00000) -- (0.00000, -4.00000, 8.00000);
\draw[edge] (-4.00000, 0.00000, 8.00000) -- (0.00000, 4.00000, 8.00000);
\draw[edge] (-4.00000, 8.00000, 0.00000) -- (0.00000, 8.00000, -4.00000);
\draw[edge] (-4.00000, 8.00000, 0.00000) -- (0.00000, 8.00000, 4.00000);
\draw[edge] (0.00000, -8.00000, 4.00000) -- (0.00000, -4.00000, 8.00000);
\draw[edge] (0.00000, -4.00000, 8.00000) -- (4.00000, 0.00000, 8.00000);
\draw[edge] (0.00000, 4.00000, -8.00000) -- (0.00000, 8.00000, -4.00000);
\draw[edge] (0.00000, 4.00000, 8.00000) -- (0.00000, 8.00000, 4.00000);
\draw[edge] (0.00000, 4.00000, 8.00000) -- (4.00000, 0.00000, 8.00000);
\draw[edge] (0.00000, 8.00000, -4.00000) -- (4.00000, 8.00000, 0.00000);
\draw[edge] (0.00000, 8.00000, 4.00000) -- (4.00000, 8.00000, 0.00000);
\draw[edge] (4.00000, 0.00000, 8.00000) -- (8.00000, 0.00000, 4.00000);
\draw[edge] (4.00000, 8.00000, 0.00000) -- (8.00000, 4.00000, 0.00000);
\draw[edge] (8.00000, 0.00000, 4.00000) -- (8.00000, 4.00000, 0.00000);
\node[vertex] at (-8.00000, -4.00000, 0.00000)     {};
\node[vertex] at (-8.00000, 0.00000, -4.00000)     {};
\node[vertex] at (-8.00000, 0.00000, 4.00000)     {};
\node[vertex] at (-8.00000, 4.00000, 0.00000)     {};
\node[vertex] at (-4.00000, -8.00000, 0.00000)     {};
\node[vertex] at (-4.00000, 0.00000, -8.00000)     {};
\node[vertex] at (-4.00000, 0.00000, 8.00000)     {};
\node[vertex] at (-4.00000, 8.00000, 0.00000)     {};
\node[vertex] at (0.00000, -8.00000, 4.00000)     {};
\node[vertex] at (0.00000, -4.00000, 8.00000)     {};
\node[vertex] at (0.00000, 4.00000, -8.00000)     {};
\node[vertex] at (0.00000, 4.00000, 8.00000)     {};
\node[vertex] at (0.00000, 8.00000, -4.00000)     {};
\node[vertex] at (0.00000, 8.00000, 4.00000)     {};
\node[vertex] at (4.00000, 0.00000, 8.00000)     {};
\node[vertex] at (4.00000, 8.00000, 0.00000)     {};
\node[vertex] at (8.00000, 0.00000, 4.00000)     {};
\node[vertex] at (8.00000, 4.00000, 0.00000)     {};
\end{scope}

	\input{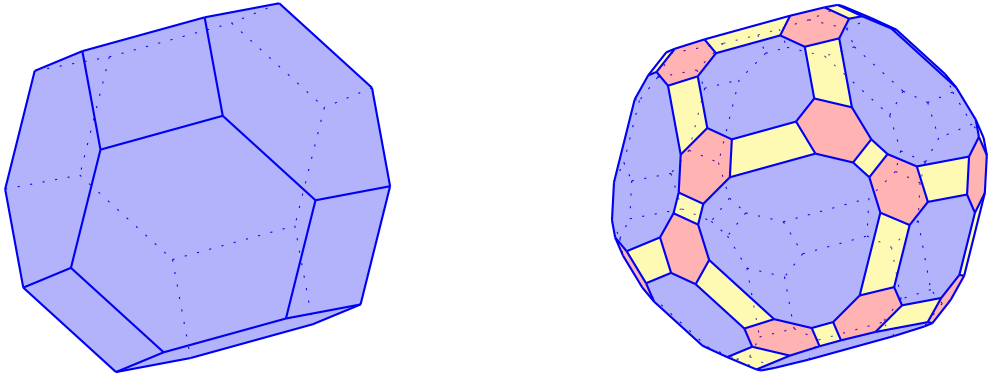}
\end{tikzpicture}
\end{center}
\caption{$\Pi_3$ and $\Pi^2_3(4,1)$}
\label{fig:2polytopes}
\end{figure}

The maximal cones in the Braid fan $\Br_d$ are sets of points whose coordinates are given in a fixed order. In Section \ref{sec:NBF}, we introduce the ``nested Braid fan'' $\Br_d^2,$ which is a refinement of $\Br_d$ by considering first differences of ordered coordinates. (See Definitions \ref{defn:NBF} and \ref{defn:NBF2} for detail.) We show that $\Br_d^2$ is the normal fan of ``usual nested permutohedra'', a subfamily of which is $\Pi_d^2(M,N)$ (called ``regular nested permutohedra''), and thus is a projective fan. (See Figure \ref{fig:2polytopes} for a picture of $\Pi_3$ and $\Pi_3^2(4,1)$ side by side.) We then use the general techniques derived in Section \ref{sec:prel} to give a characterization for the deformation cone of $\Br_d^2$ analogous to the results for the deformation cone of $\Br_d.$

One key ingredient in our proof for the Submodularity Theorem is the natural one-to-one correspondence between chains in the Boolean algebra $\BB_{d+1}$ and faces of a regular permutohedron.
Parallelly, in Section \ref{sec:NBF}, we consider the ``ordered partition poset'' $\OO_{d+1}$ (see Definition \ref{defn:osp}), and show same statement holds for $\OO_{d+1}$ and the regular nested permutohedron. 

We remark that there are multiple possible answers to our original question of how to generalize generalized permutohedra further. We landed on our current construction of nested permutohedra after trying a few possible approaches, as it has the richest combinatorics among all constructions we considered. In fact, this family of polytopes turned out to be more interesting than we had expected, having connections to combinatorial objects previously studied by other authors.  
For example, our construction has the same flavor as the construction of the ``permuto-associahedron'', defined as a CW complex by Kapranov in \cite{kapranov} and as a polytope by Reiner and Ziegler in \cite{ReinerZieg} using the theory of fiber polytopes. 
The rough idea for the construction of a permuto-associahedron given in \cite{ReinerZieg} is to put an associahedron on each vertex of a permutohedron, which is parallel to one way of constructing a nested permutohedron, by putting a permutohedron on each vertex of a (one dimensional higher) permutohedron.
One difference between Reiner-Ziegler's work and ours is that we are able to give explicit coordinates for our construction. Since the associahedron can be realized as a deformation of the permutohedron \cite[Section 8.2]{post}, we expect that the permuto-associahedron will have a concrete realization as a deformation of this nested permutohedron. 


Finally, during a talk given by the first author on materials presented in Sections \ref{sec:GP} and \ref{sec:NBF}, Victor Reiner asked whether the nested Braid fan is the barycentric subdivision of the Braid fan. We give an affirmative answer to his question in Section \ref{sec:chisel}. 

\subsection*{Organization of the paper} In \S \ref{sec:prel}, we will present/review definitions of deformation cones of polytopes and projective fans, and discuss general techniques for computing them from the polytopal side. 
In \S \ref{sec:GP}, we review known facts about generalized permutohedra, apply techniques derived in \S \ref{sec:prel} to find deformation cones of $\Br_d$, and give a proof for the Submodularity Theorem. In \S \ref{sec:NBF}, we define nested Braid fan $\Br_d^2$ and nested permutohedra, and discuss their combinatorics, using which we give inequality description for nested permutohedra and determine the deformation cone of $\Br_d^2.$ 
In \S \ref{sec:chisel}, we describe how we can obtain the nested Braid fan as the barycentric subdivision of the Braid fan, answering Victor Reiner's question.
We finish the main body of this article with some questions that might be interesting for future research in \S \ref{sec:question}. 


\subsection*{Acknowledgements} The second author is partially supported by National Science Foundation grant
DMS-1265702 and a grant from the Simons Foundation \#426756. The final writing of the work was completed when both authors were attending the program ``Geometric and Topological Combinatorics'' at the Mathematical Sciences Research Institute in Berkeley, California, during the Fall 2017 semester, and they were partially supported by the National Science Foundation grant DMS-1440140.

The authors would like to thank Federico Ardila and Brian Osserman for helpful discussion, and thank Alex Fink and Christian Haase for explaining Theorem \ref{thm:bary1}. 

\section{Determining deformation cones} \label{sec:prel}
We assume familiarity with basic definitions of polyhedra and polytopes as presented in \cite{barvinok, zie}. 
The main purpose of this section is to derive a systematic way to answer the following general question: For a fixed polytope $P_0 \subset V,$ how do we characterize all ``deformations'' of $P_0$?   
We start by setting up our question formally.
\begin{setup}\label{setup1}
	Let $P_0$ be a fixed full-dimensional polytope in $V$ defined by $\A \xx \le \bb_0$, where each inequality is \emph{facet-defining}, i.e., $\{ \xx \in P \ : \ \langle \bfa_i, \xx \rangle = b_i\}$
	is a facet of $P.$ 
Suppose $P_0$ has $n$ facets $F_1, \dots, F_m$. We may assume that
	the system defining $P$ is 
\begin{equation}\label{fineq}
	\langle \bfa_i, \xx \rangle \le b_{0,i}, \quad 1 \le i \le m,
\end{equation}
where $\bfa_i$ is an normal vector to the facet $F_i.$
\end{setup}

Roughly speaking, a deformation of $P_0$ is a polytope obtained from $P_0$ by moving facets of $P_0$ ``without passing any vertices''. We make this more precise below. 
\begin{definition} \label{defn:deform0}
	A polytope $Q \subset V$ is a \emph{deformation} of $P_0$ (described in Setup \ref{setup1}), 
	if there exists $\bb \in \RR^m$ such that the following two conditions are satisfied:
	\begin{enumerate}[label=(\alph*)]
		\item \label{item:pts}
	$Q$ is defined by $\A \xx \le \bb$ (with the same matrix $\A$ as in Setup \ref{setup1}) or equivalently, 
\begin{equation}\label{Qfineq}
	\langle \bfa_i, \xx \rangle \le b_{i}, \quad 1 \le i \le m. 
\end{equation}
\item \label{item:nopass} For any vertex $v$ of $P_0$, if $F_{i_1}, F_{i_2}, \dots, F_{i_k}$ are the facets of $P_0$ on which $v$ lies, then the intersection of
	\[
		\{ \xx \in V \ : \ \langle \bfa_{i_j}, \xx \rangle = b_{i_j}\}, \quad 1 \le j \le k
\]
is a vertex $u$ of $Q.$
	\end{enumerate}
	We call $\bb$ a \emph{deforming vector} for $Q$.
\end{definition}

It is not hard to see that any deformation $Q$ of $P_0$ is associated with a \emph{unique} deforming vector $\bb$ because conditions \ref{item:pts} and \ref{item:nopass} imply that the entries of $\bb$ must satisfy
\[ b_i = \max_{\xx \in Q} \langle \bfa_i, \xx \rangle, \quad \forall 1 \le i \le m. \]
Thus, we say $\bb$ is \emph{the} deforming vector for $Q.$
The uniqueness of $\bb$, together with condition (a), establishes a one-to-one correspondence between deformations $Q$ of $P_0$ and their associated deformation vectors. Therefore, we give the following definition.
\begin{definition}
	The \emph{deformation cone} of $P_0$, denoted by $\Def(P_0)$, is the collection of deforming vectors $\bb \in \RR^m$ described in Definition \ref{defn:deform0}. 
\end{definition}

\begin{figure}[t]
\begin{tikzpicture}
	\begin{scope}[scale=0.5]

\node at (13,0) {$\left(\begin{array}{rr} -1&0 \\ 0&1 \\ 0&-1 \\ 1&-1 \end{array}\right)\left(\begin{array}{r} x\\y\end{array}\right) \leq \left(\begin{array}{r} 1 \\ 2 \\ 1 \\ 2 \end{array}\right).$};

\draw[help lines,dashed] (3,0)--(-3,0);
\draw[help lines,dashed] (0,3)--(0,-3);

\draw (-1,-3)--(-1,3);
\draw (-3,2)--(5,2);
\draw (-3,-1)--(3,-1);
\draw (0,-2)--(5,3);

\node[above left] at (-1,2) {$\scriptstyle (-1,2)$};
\node[below left] at (-1,-1) {$\scriptstyle (-1,-1)$};
\node[below right] at (1,-1) {$\scriptstyle (1,-1)$};
\node[above right] at (4,2) {$\scriptstyle (4,2)$};

\draw[fill=red] (-1,2) circle [radius=0.1];
\draw[fill=red] (-1,-1) circle [radius=0.1];
\draw[fill=red] (1,-1) circle [radius=0.1];
\draw[fill=red] (4,2) circle [radius=0.1];

\node at (0.8, 0.7) {\Large $\textcolor{blue}{P_0}$};
\node[above left] at (-1,0) {$F_1$};
\node[above] at (2,2) {$F_2$};
\node[below left] at (0,-1) {$F_3$};
\node[above right] at (3,0) {$F_4$};

\draw[pattern=dots] (-1,-1)--(1,-1)--(4,2)--(-1,2)--cycle;
\end{scope}

 \begin{scope}[yshift=-2.8cm, scale=0.35]
\draw[help lines,dashed] (3,0)--(-3,0);
\draw[help lines,dashed] (0,3)--(0,-3);

\draw (-3,-3)--(-3,3);
\draw (-3.5,2)--(5,2);
\draw (-3.5,0)--(3,0);
\draw (0.5,-2)--(5.5,3);

\node[above left] at (-3,2) {$\scriptstyle (-1,2)$};
\node[below left] at (-3,0) {$\scriptstyle (-3,0)$};
\node[below right] at (2.5,0) {$\scriptstyle (2.5,0)$};
\node[above right] at (4.5,2) {$\scriptstyle (4.5,2)$};

\draw[fill=red] (-3,2) circle [radius=0.2];
\draw[fill=red] (-3,0) circle [radius=0.2];
\draw[fill=red] (2.5,0) circle [radius=0.2];
\draw[fill=red] (4.5,2) circle [radius=0.2];

\draw[pattern=dots] (-3,0)--(2.5,0)--(4.5,2)--(-3,2)--cycle;

\node at (0,-4) {$\bb_1=(3,2,0,2.5)$};
\node at (0.5,2.7) {\Large $\textcolor{blue}{Q_1}$};

\end{scope}

 \begin{scope}[xshift= 5cm, yshift=-2.8cm, scale=0.35]
\draw[help lines,dashed] (3,0)--(-3,0);
\draw[help lines,dashed] (0,3)--(0,-3);

\draw (-1,-3)--(-1,3);
\draw (-3,2)--(5,2);
\draw (-3,-1)--(3,-1);
\draw (-2,-2)--(3,3);

\node[above left] at (-1,2) {$\scriptstyle (-1,2)$};
\node[below left] at (-1.2,-0.3) {$\scriptstyle (-1,-1)$};
\node[above right] at (2,2) {$\scriptstyle (2,2)$};

\draw[fill=red] (-1,2) circle [radius=0.2];
\draw[fill=red] (-1,-1) circle [radius=0.2];
\draw[fill=red] (2,2) circle [radius=0.2];
\draw[pattern=dots] (-1,-1)--(2,2)--(-1,2)--cycle;

\node at (0,-4) {$\bb_2=(1,2,1,0)$};
\node at (0.5,2.7) {\Large $\textcolor{blue}{Q_2}$};
\end{scope}

\begin{scope}[xshift= 9.5cm, yshift=-2.8cm, scale=0.35]
\draw[help lines,dashed] (3,0)--(-3,0);
\draw[help lines,dashed] (0,3)--(0,-3);

\draw (-1,-3)--(-1,3);
\draw (-3,2)--(5,2);
\draw (-3,-2)--(3,-2);
\draw (-2,-2)--(3,3);

\node[above left] at (-1,2) {$\scriptstyle (-1,2)$};
\node[below left] at (-1.2,-0.3) {$\scriptstyle (-1,-1)$};
\node[above right] at (2,2) {$\scriptstyle (2,2)$};
\node[below right] at (-1,-2) {$\scriptstyle (-1,-2)$};
\node[below left] at (-1.5,-2) {$\scriptstyle (-2,-2)$};

\draw[fill=red] (-1,2) circle [radius=0.2];
\draw[fill=black] (-1,-1) circle [radius=0.2];
\draw[fill=red] (2,2) circle [radius=0.2];
\draw[fill=red] (-1,-2) circle [radius=0.2];
\draw[fill=red] (-2,-2) circle [radius=0.2];
\draw[pattern=dots] (-1,-1)--(2,2)--(-1,2)--cycle;

\node at (0,-4) {$\bb_3=(1,2,2,0)$};
\node at (0.5,2.7) {\Large $\textcolor{blue}{Q_3}$};
\end{scope}
\end{tikzpicture}
\caption{Polytopes in Example \ref{ex:example}}
\label{fig:exfigure}
\end{figure}
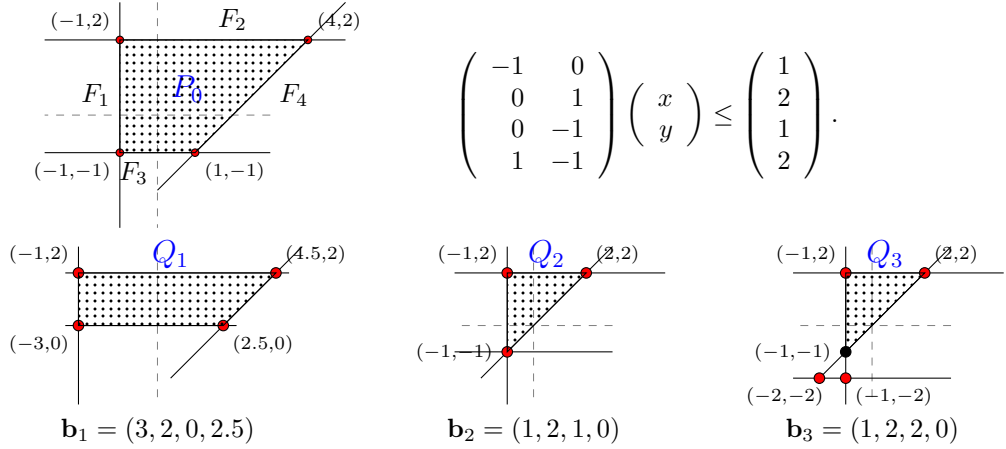

\begin{example}\label{ex:example} 
Let $P_0 \subset V = \RR^2$ be the polytope on the top left of Figure \ref{fig:exfigure}, which is defined by the linear system given to its right. Let $\A$ be the matrix in the linear system. 
Any deformation $Q$ of $P_0$ can be defined by $\A \xx \le \bb$ for some $\bb.$ 
Two possible deformations $Q_1$ and $Q_2$ together with their respective deforming vectors $\bb_1$ and $\bb_2$ are shown on the bottom of Figure \ref{fig:exfigure}. Notice that $Q_3$ defined by $\A \xx \le \bb_3$ is exactly the same polytope as $Q_2$, so is a deformation of $P_0$. However, $\bb_3$ does not satisfy condition \ref{item:nopass}, and thus is not a deforming vector. Hence, $\bb_1, \bb_2 \in \Def(P_0)$, but $\bb_3 \not\in \Def(P_0).$ (This conclusion will be proved formally in Example \ref{ex:exampleprop}.)
\end{example}

The deformation cone of $P_0$ is a natural subject to study if one is interested in deformations of the fixed polytope $P_0$. We can now rephrase our initial general question.
\begin{question}\label{ques1}
Fix a full dimensional polytope $P_0 \subset V$. How do we find a characterization for $\Def(P_0)?$
\end{question}

There is another equivalent way of defining deformations $Q$ of $P_0$ using normal fans of polytopes. (See Definition \ref{defn:normal} for a formal definition of normal cones and normal fans.) 
\begin{proposition}\label{prop:deform}
	A polytope $Q \subset V$ is a deformation of $P_0$ if and only if the normal fan $\Sigma(Q)$ of $Q$ is a coarsening of the normal fan $\Sigma(P_0)$ of $P_0$.

\end{proposition}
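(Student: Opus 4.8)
The plan is to route everything through one elementary identity: if $v$ is a vertex of $P_0$ lying on the facets indexed by $I_v:=\{i : \langle\bfa_i,v\rangle=b_{0,i}\}$, then $\ncone(v,P_0)=\co(\bfa_i : i\in I_v)$, and this cone is full-dimensional, so the vectors $\bfa_i\ (i\in I_v)$ span $W$ as a vector space. This is the standard reading of the normal cone at a vertex off a facet presentation (Definition \ref{defn:normal}), and both implications reduce to a short computation with it. The other standing fact to invoke is that $\Sigma(P_0)$ is a complete fan (since $P_0$ is full-dimensional), so each of its cones is a face of some maximal cone $\ncone(v,P_0)$, and that the supports of $\Sigma(P_0)$ and $\Sigma(Q)$ both equal $W$ (any linear functional attains a maximum on a bounded set), so that ``$\Sigma(Q)$ coarsens $\Sigma(P_0)$'' amounts to: every cone of $\Sigma(P_0)$ is contained in some cone of $\Sigma(Q)$.

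For the ``only if'' direction I would take a deformation $Q$ with deforming vector $\bb$, fix a vertex $v$ of $P_0$, and let $u$ be the vertex of $Q$ furnished by condition \eqref{item:nopass}, so $\langle\bfa_i,u\rangle=b_i$ for all $i\in I_v$, while condition \eqref{item:pts} gives $\langle\bfa_i,\xx\rangle\le b_i$ for every $\xx\in Q$ and every $i$. Then for $\ww=\sum_{i\in I_v}\lambda_i\bfa_i$ with $\lambda_i\ge 0$ and any $\xx\in Q$, $\langle\ww,\xx\rangle=\sum_i\lambda_i\langle\bfa_i,\xx\rangle\le\sum_i\lambda_i b_i=\sum_i\lambda_i\langle\bfa_i,u\rangle=\langle\ww,u\rangle$, so $\ww\in\ncone(u,Q)$. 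Hence $\ncone(v,P_0)\subseteq\ncone(u,Q)$, a cone of $\Sigma(Q)$; since every cone of $\Sigma(P_0)$ is a face of some such maximal cone, every cone of $\Sigma(P_0)$ lies in a cone of $\Sigma(Q)$, which is the coarsening statement.

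For the ``if'' direction I would assume $\Sigma(Q)$ coarsens $\Sigma(P_0)$ and define $b_i:=\max_{\xx\in Q}\langle\bfa_i,\xx\rangle$, $\bb=(b_1,\dots,b_m)$. Given a vertex $v$ of $P_0$, the full-dimensional cone $\ncone(v,P_0)$ sits inside some cone of $\Sigma(Q)$, which is then necessarily a maximal one, $\ncone(u,Q)$ for a vertex $u$ of $Q$. For $i\in I_v$ we have $\bfa_i\in\ncone(v,P_0)\subseteq\ncone(u,Q)$, so $u$ maximizes $\langle\bfa_i,\cdot\rangle$ over $Q$, i.e.\ $\langle\bfa_i,u\rangle=b_i$; since the $\bfa_i\ (i\in I_v)$ span $W$, $u$ is the unique solution of $\langle\bfa_i,\xx\rangle=b_i\ (i\in I_v)$, which is exactly condition \eqref{item:nopass}. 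It remains to verify condition \eqref{item:pts}, namely $Q=R:=\{\xx:\A\xx\le\bb\}$. The inclusion $Q\subseteq R$ is immediate; $R$ is bounded because its recession cone $\{\yy:\langle\bfa_i,\yy\rangle\le 0\ \forall i\}$ is $\{0\}$ (the $\bfa_i$ positively span $W$, as $P_0$ is bounded); and for $\ww$ in the relative interior of a maximal cone $\ncone(v,P_0)$, writing $\ww=\sum_{i\in I_v}\lambda_i\bfa_i$ with $\lambda_i\ge 0$ and using the vertex $u$ above gives $h_R(\ww)\le\sum_i\lambda_i b_i=\langle\ww,u\rangle\le h_Q(\ww)\le h_R(\ww)$, where $h_Q,h_R$ are the support functions. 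Since these relative interiors are dense in $W$ and support functions of polytopes are continuous, $h_R=h_Q$, hence $R=Q$.

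The step I expect to be the real obstacle is precisely recovering condition \eqref{item:pts} in the ``if'' direction — arguing a priori that $Q$ admits a presentation $\A\xx\le\bb$ with the \emph{original} normal vectors — and making this robust when $Q$ is not full-dimensional (for instance a segment deformation of a square), where the facet normals of $Q$ cannot simply be read off as rays of $\Sigma(Q)$. The support-function argument above is designed to sidestep this by matching $Q$ and $\{\A\xx\le\bb\}$ globally rather than facet-by-facet; an alternative would be to pass to the affine hull of $Q$. Everything else is bookkeeping with the identity $\ncone(v,P_0)=\co(\bfa_i : i\in I_v)$ and with the definition of coarsening.
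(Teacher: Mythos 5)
Your proposal is correct and follows the same overall strategy as the paper's proof. The ``only if'' direction is essentially identical: each $\bfa_{i_j}$ is maximized over $Q$ at the vertex $u$ supplied by condition (b), hence $\ncone(v,P_0) = \co(\bfa_{i_j}) \subseteq \ncone(u,Q)$, and completeness of both fans upgrades ``every maximal cone of $\Sigma(P_0)$ sits in a maximal cone of $\Sigma(Q)$'' to the coarsening statement. In the ``if'' direction you define $\bb$ by $b_i=\max_{\xx\in Q}\langle\bfa_i,\xx\rangle$ and verify condition (b) exactly as the paper does. The one genuine divergence is the verification of condition (a), i.e.\ that $Q$ equals $R=\{\xx:\A\xx\le\bb\}$. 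The paper argues via vertices and normal cones: every vertex $u$ of $Q$ is shown to be a vertex of $R$ with $\ncone(u,Q)\subseteq\ncone(u,R)$, which forces the two polytopes to coincide. You instead check that $R$ is bounded (trivial recession cone, since the $\bfa_i$ positively span $W$) and then compare support functions, showing $h_R=h_Q$ on the dense union of relative interiors of maximal cones of $\Sigma(P_0)$ and concluding by continuity. Both arguments work; yours has the advantage of avoiding any facet-by-facet analysis of $Q$ -- which, as you observe, is the delicate point when $Q$ is lower-dimensional -- at the cost of importing the support-function formalism that the paper does not otherwise use.
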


Proposition \ref{prop:deform} is well known; but for ease of reference, we provide a proof for it. Since this proof is quite different from what we discuss in the rest of the paper, it will be included in Appendix \ref{apd:normal}.


Note that in Example \ref{ex:example}, the polytope $Q_1$ has the same normal fan as $P_0$, whereas $Q_2$'s normal fan is a coarsening.

Proposition \ref{prop:deform} implies that if two polytopes $P_1$ and $P_2$ have the same normal fan $\Sigma$, they have exactly the same deformation cone. By abusing the notation, we might denote this deformation cone by  $\Def(\Sigma),$ and call it the \emph{deformation cone} of $\Sigma.$

We say a fan $\Sigma$ is \emph{projective} if it is the normal fan of a polytope. (It is not true all the fans are projective.) Once we know that a projective fan $\Sigma$ is the normal fan of a polytope, one can check that the polytope is full dimensional if and only if $0 \in \Sigma,$ i.e., all cones in $\Sigma$ are pointed. We use these language to rewrite Setup \ref{setup1} and Question \ref{ques1}

\begin{setup}\label{setup2}
Let $\Sigma_0$ be a projective fan in $W$ such that $0 \in \Sigma_0$. Assume it has $m$ one dimensional cones that are generated by rays $\bfa_1, \dots, \bfa_m,$ respectively. 
\end{setup}
\begin{question}\label{ques2}
	Given a fixed fan $\Sigma_0$ as described in Setup \ref{setup2},
how do we find a characterization for $\Def(\Sigma_0)?$
\end{question}

Questions \ref{ques1} and \ref{ques2} are the same question in two different languages, and both have been studied, 
where the latter one is related to the study of toric varieties. (See \cite{cls} for general results on toric varieties.)
It is worth remarking that big part of the motivation and tools come from that branch of mathematics. When $\Sigma_0$ is smooth, then $\Def(\Sigma_0)$, modulo its linearity space, is isomorphic to $\textrm{Nef}(\Sigma_0)$, the cone of \emph{numerically effective divisors} (see \cite[Chapter 6]{cls}).

\begin{remark}
	In addition to the two definitions we have provided, there are additional different but equivalent ways of defining deformations of polytopes. In particular, in the Appendix of \cite{PosReiWil}, the authors discuss five different ways, including the normal fan version stated in Proposition \ref{prop:deform}. However, they restrict their definitions to simple polytopes only, while our definition is for \emph{any} polytope. Furthermore, it seems our Definition \ref{defn:deform0} has not (or at least not explicitly) appeared in the literature, and actually is very important for determining deformation cones as the techniques (that will be shown below) are derived from it directly. 
\end{remark}


There are three main results that will be presented in the rest of this section. The first result is Corollary \ref{cor:ineqs}, in which we give an explicit description for the deformation cone $\Def(P_0)$ of $P_0$ using linear equalities and inequalities. This will be derived directly from Definition \ref{defn:deform0}. We then analyze inequalities in Corollary \ref{cor:ineqs} further and apply it to simple polytopes to obtain in Proposition \ref{prop:redux} a simpler description for $\Def(P_0)$ using inequalities indexed by edges of $P_0$. We then give our third result - Proposition \ref{prop:reduxfan} - by restating Proposition \ref{prop:redux} using the language of simplicial fans, in which inequalities are indexed by pairs of adjacent maximal cones in the fan. We end this section with a discussion on how to determine whether a polytope is a deformation of $P_0$ using $\Def(P_0).$

\subsection*{Deformation cones of (not necessarily simple) polytopes}

Even though condition \ref{item:pts} of Definition \ref{defn:deform0} is necessary for the definition of deformations of a fixed polytope, 
if one is only concerned about deforming vectors, only condition \ref{item:nopass} is needed as stated in Lemma \ref{lem:detb} below.

\begin{definition}
Suppose $v$ is a vertex of $P_0$. Let $F_{i_1}, F_{i_2}, \dots, F_{i_k}$ be the facets of $P_0$ on which $v$ lies and let $u_\bb$ be the intersection of
	\[
		\{ \xx \in V \ : \ \langle \bfa_{i_j}, \xx \rangle = b_{i_j}\}, \quad 1 \le j \le k.
\]

We say a vector $\bb \in \RR^m$ satisfies the \emph{NEI} (short for ``non-empty-intersection'') \emph{condition for $v$} if $u_\bb$ is nonempty, so is a point.  
	
We say $\bb$ satisfies the \emph{no-passing condition for $v$} if
$\A u_\bb \le \bb,$ or equivalently, 
	\[ \langle \bfa_i, u_\bb \rangle \le b_i, \quad \forall i \neq i_1,\dots, i_k.\]
\end{definition}

\begin{lemma}\label{lem:detb}
	Let $\bb \in \RR^m.$ Then $\bb \in \Def(P_0)$ if and only if $\bb$ satisfies the NEI and no-passing conditions for every vertex $v$ of $P_0.$
%
\end{lemma}

\begin{proof}
	The forward implication follows directly from Definition \ref{defn:deform0}. Conversely, suppose the two conditions hold. Let $Q$ be defined by $\A \xx \le \bb.$ Condition \ref{item:pts} of Definition \ref{defn:deform0} is automatically satisfied, and the NEI and no-passing conditions guarantee that $u_\bb$ is a vertex of $Q$, and thus condition \ref{item:nopass} holds. 
\end{proof}

The no-passing condition can fail in different scenarios. For the polytope $Q_3$ of Example \ref{ex:example}, not only the inequality $-y \le 2$ is not facet-defining, but the hyperplane determined by $-y = 2$ does not ``touch'' $Q_3,$ which causes the failure of the no-passing condition.
Below, we show a different example where the no-passing condition fails even though all the inequalities are still facet-defining.

\begin{example}
	Consider the $3$-dimensional polytopes $P_0$ and $Q$ shown on the left of Figure \ref{fig:above}. The right of Figure \ref{fig:above} shows how they look like when being viewed from above.
\begin{figure}[!htb]
\begin{tikzpicture}%
	\begin{scope}
	[x={(-0.2 cm, -0.1 cm)},
	y={(0.5 cm,0 cm)},
	z={(0cm, 0.5cm)},
	scale=0.5,
	back/.style={loosely dotted, thin},
	edge/.style={color=black!95!black, thick},
	facet/.style={fill=black!95!black,fill opacity=0.200},
	vertex/.style={inner sep=1pt,circle,draw=red!25!black,fill=red!75!black,thick,anchor=base}]
%
%
\coordinate (8.00000, 0.00000, 0.00000) at (8.00000, 0.00000, 0.00000);
\coordinate (8.00000, 10.00000, 0.00000) at (8.00000, 10.00000, 0.00000);
\coordinate (0.00000, 10.00000, 0.00000) at (0.00000, 10.00000, 0.00000);
\coordinate (4.00000, 6.00000, 4.00000) at (4.00000, 6.00000, 4.00000);
\coordinate (4.00000, 4.00000, 4.00000) at (4.00000, 4.00000, 4.00000);
\coordinate (0.00000, 0.00000, 0.00000) at (0.00000, 0.00000, 0.00000);
\fill[facet] (4.00000, 4.00000, 4.00000) -- (8.00000, 0.00000, 0.00000) -- (8.00000, 10.00000, 0.00000) -- (4.00000, 6.00000, 4.00000) -- cycle {};
\fill[facet] (4.00000, 6.00000, 4.00000) -- (8.00000, 10.00000, 0.00000) -- (0.00000, 10.00000, 0.00000) -- cycle {};
\fill[facet] (0.00000, 0.00000, 0.00000) -- (0.00000, 10.00000, 0.00000) -- (4.00000, 6.00000, 4.00000) -- (4.00000, 4.00000, 4.00000) -- cycle {};
\fill[facet] (0.00000, 0.00000, 0.00000) -- (8.00000, 0.00000, 0.00000) -- (4.00000, 4.00000, 4.00000) -- cycle {};
\draw[edge] (8.00000, 0.00000, 0.00000) -- (8.00000, 10.00000, 0.00000);
\draw[edge] (8.00000, 0.00000, 0.00000) -- (4.00000, 4.00000, 4.00000);
\draw[edge] (8.00000, 0.00000, 0.00000) -- (0.00000, 0.00000, 0.00000);
\draw[edge] (8.00000, 10.00000, 0.00000) -- (0.00000, 10.00000, 0.00000);
\draw[edge] (8.00000, 10.00000, 0.00000) -- (4.00000, 6.00000, 4.00000);
\draw[edge] (0.00000, 10.00000, 0.00000) -- (4.00000, 6.00000, 4.00000);
\draw[edge] (0.00000, 10.00000, 0.00000) -- (0.00000, 0.00000, 0.00000);
\draw[edge] (4.00000, 6.00000, 4.00000) -- (4.00000, 4.00000, 4.00000);
\draw[edge] (4.00000, 4.00000, 4.00000) -- (0.00000, 0.00000, 0.00000);
\node[vertex] at (8.00000, 0.00000, 0.00000)     {};
\node[vertex] at (8.00000, 10.00000, 0.00000)     {};
\node[vertex] at (0.00000, 10.00000, 0.00000)     {};
\node[vertex] at (4.00000, 6.00000, 4.00000)     {};
\node[vertex] at (4.00000, 4.00000, 4.00000)     {};
\node[vertex] at (0.00000, 0.00000, 0.00000)     {};
\node at (-6,-4) {$\textcolor{blue}{P_0}$};
\end{scope}
  \begin{scope}%
	[xshift=3.5cm, x={(-0.2 cm, -0.1 cm)},
	y={(0.6 cm,0 cm)},
	z={(0cm, 0.4cm)},
	scale=0.5,
	back/.style={loosely dotted, thin},
	edge/.style={color=black!95!black, thick},
	facet/.style={fill=black!95!black,fill opacity=0.200},
	vertex/.style={inner sep=1pt,circle,draw=red!25!black,fill=red!75!black,thick,anchor=base}]
%
%
\coordinate (10.00000, 0.00000, 0.00000) at (10.00000, 0.00000, 0.00000);
\coordinate (10.00000, 8.00000, 0.00000) at (10.00000, 8.00000, 0.00000);
\coordinate (6.00000, 4.00000, 4.00000) at (6.00000, 4.00000, 4.00000);
\coordinate (4.00000, 4.00000, 4.00000) at (4.00000, 4.00000, 4.00000);
\coordinate (0.00000, 0.00000, 0.00000) at (0.00000, 0.00000, 0.00000);
\coordinate (0.00000, 8.00000, 0.00000) at (0.00000, 8.00000, 0.00000);
\fill[facet] (6.00000, 4.00000, 4.00000) -- (10.00000, 0.00000, 0.00000) -- (10.00000, 8.00000, 0.00000) -- cycle {};
\fill[facet] (0.00000, 8.00000, 0.00000) -- (10.00000, 8.00000, 0.00000) -- (6.00000, 4.00000, 4.00000) -- (4.00000, 4.00000, 4.00000) -- cycle {};
\fill[facet] (0.00000, 8.00000, 0.00000) -- (4.00000, 4.00000, 4.00000) -- (0.00000, 0.00000, 0.00000) -- cycle {};
\fill[facet] (0.00000, 0.00000, 0.00000) -- (10.00000, 0.00000, 0.00000) -- (6.00000, 4.00000, 4.00000) -- (4.00000, 4.00000, 4.00000) -- cycle {};
\draw[edge] (10.00000, 0.00000, 0.00000) -- (10.00000, 8.00000, 0.00000);
\draw[edge] (10.00000, 0.00000, 0.00000) -- (6.00000, 4.00000, 4.00000);
\draw[edge] (10.00000, 0.00000, 0.00000) -- (0.00000, 0.00000, 0.00000);
\draw[edge] (10.00000, 8.00000, 0.00000) -- (6.00000, 4.00000, 4.00000);
\draw[edge] (10.00000, 8.00000, 0.00000) -- (0.00000, 8.00000, 0.00000);
\draw[edge] (6.00000, 4.00000, 4.00000) -- (4.00000, 4.00000, 4.00000);
\draw[edge] (4.00000, 4.00000, 4.00000) -- (0.00000, 0.00000, 0.00000);
\draw[edge] (4.00000, 4.00000, 4.00000) -- (0.00000, 8.00000, 0.00000);
\draw[edge] (0.00000, 0.00000, 0.00000) -- (0.00000, 8.00000, 0.00000);
\node[vertex] at (10.00000, 0.00000, 0.00000)     {};
\node[vertex] at (10.00000, 8.00000, 0.00000)     {};
\node[vertex] at (6.00000, 4.00000, 4.00000)     {};
\node[vertex] at (4.00000, 4.00000, 4.00000)     {};
\node[vertex] at (0.00000, 0.00000, 0.00000)     {};
\node[vertex] at (0.00000, 8.00000, 0.00000)     {};
\node at (-6,-4) {$\textcolor{blue}{Q}$};
\end{scope}

\begin{scope}[xshift=7cm, yshift=-1cm, scale=0.2]
\draw (0,0)--(10,0)--(10,8)--(0,8)--cycle;
\draw (0,0)--(4,4)--(6,4)--(10,0);
\draw (0,8)--(4,4)--(6,4)--(10,8);
\draw[fill] (0,0) circle [radius=0.2];
\draw[fill] (10,0) circle [radius=0.2];
\draw[fill] (10,8) circle [radius=0.2];
\draw[fill] (0,8) circle [radius=0.2];
\draw[fill] (4,4) circle [radius=0.2];
\draw[fill] (6,4) circle [radius=0.2];
\node at (1.5,4) {$\textcolor{blue}{P_0}$};
\end{scope}

\begin{scope}[xshift=10cm, yshift=-1cm, scale=0.2]
\draw (0,0)--(0,10)--(8,10)--(8,0)--cycle;
\draw (0,0)--(4,4)--(4,6)--(0,10);
\draw (8,0)--(4,4)--(4,6)--(8,10);
\draw[fill] (0,0) circle [radius=0.2];
\draw[fill] (0,10) circle [radius=0.2];
\draw[fill] (8,10) circle [radius=0.2];
\draw[fill] (8,0) circle [radius=0.2];
\draw[fill] (4,4) circle [radius=0.2];
\draw[fill] (4,6) circle [radius=0.2];

\node at (2,5) {$\textcolor{blue}{Q}$};
\end{scope}

\end{tikzpicture}
\caption{}
\label{fig:above}
\end{figure}
$Q$ is obtained from $P_0$ by moving the $\textsc{left}$ and $\textsc{right}$ facets of $P_0$ inward ``too much''. 
Notice that in $P_0$, the facets $\textsc{front},\textsc{back}$ and $\textsc{right}$ intersect in a vertex, but in $Q$ they do not. More precisely, the hyperplanes determined by the $\textsc{front},\textsc{back}$ and $\textsc{right}$ facets of $Q$ intersect at a point outside of $Q$, and thus is on the wrong side of the hyperplane determined by the $\textsc{left}$ facet. So $Q$ is not a deformation of $P_0$ even though it can be defined using the same matrix $\A$ as $P_0.$ 
\end{example}

It is straightforward to translate conditions in Lemma \ref{lem:detb} to explicit linear conditions. We give the following notation and definition before stating Corollary \ref{cor:ineqs}.
\begin{notation}
For convenience, for any facet $F = F_i$ of $P_0,$ we sometimes use $F$ as the subscripts for $\bfa_i$ and $b_i,$ that is
\[ \bfa_F = \bfa_i, \quad b_F = b_i.\]
\end{notation}

\begin{definition}
Let $v$ be a vertex of $P_0.$
Suppose $F_{i_1}, F_{i_2}, \dots, F_{i_k}$ (with $i_1 < i_2 < \cdots < i_k$) are the facets of $P_0$ on which $v$ lies. (Note that we must have $k \ge d$.) We say $F_{i_1},\dots, F_{i_d}$ are the \emph{first $d$ supporting facets of $v$}, and $F_{i_j}$ for $d < j \le k$ is an \emph{extra supporting facet of $v$}. (Note these definitions rely on the specific ordering we give for facets of $P_0$.)

For any $\bb \in \RR^m$, let $v_\bb$ be the intersection of the hyperplanes determined by the first $d$ supporting facets of $v,$ that is, $v_\bb$ is the intersection of
	\[
		\{ \xx \in V \ : \ \langle \bfa_{i_j}, \xx \rangle = b_{i_j}\}, \quad 1 \le j \le d,
\]
	which clearly is a point.

For any vertex $v$ of $P_0$ and any facet $F$ of $P_0$, we associate with the pair $(v, F)$ an equality or an inequality as below: 
\begin{align*}
E_{v,F}(\bb) :& \quad \langle \bfa_F, v_\bb \rangle = b_F, \\
I_{v,F}(\bb) :& \quad \langle \bfa_F, v_\bb \rangle \le b_F. 
\end{align*}
\end{definition}

\begin{corollary}\label{cor:ineqs}
The deformation cone $\Def(P_0)$ is the collection of vectors $\bb$ satisfying the following two conditions: 
\begin{enumerate}[label=(\roman*)]
	\item All the equalities $E_{v,F}(\bb)$ hold, where $(v,F)$ is a vertex-facet pair of $P_0$ such that $F$ is an extra supporting facet of $v$.
	\item 
All the inequalities $I_{v,F}(\bb)$ holds, where $(v,F)$ is a vertex-facet pair of $P_0$ such that $F$ is not a supporting facet of $v.$
\end{enumerate}

Therefore, $\Def(P_0)$ is (indeed) a polyhedral cone.
\end{corollary}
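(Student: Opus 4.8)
The plan is to derive Corollary~\ref{cor:ineqs} as a direct unpacking of Lemma~\ref{lem:detb}, translating the NEI and no-passing conditions into explicit linear (in)equalities on $\bb$ by replacing the "genuine" intersection point $u_\bb$ with the always-defined point $v_\bb$ cut out by the first $d$ supporting hyperplanes. The key observation is that for a fixed vertex $v$ of $P_0$ with supporting facets $F_{i_1},\dots,F_{i_k}$ (in increasing order of index, $k \ge d$), the vectors $\bfa_{i_1},\dots,\bfa_{i_d}$ span $W$ (since $P_0$ is full-dimensional and $v$ is a vertex, the normal cone at $v$ is $d$-dimensional, so among its generators we can — and do, by the definition — pick $d$ linearly independent ones; in fact the first $d$ in the chosen order work because any $d$ of them that are dependent would force $v$ to lie on a positive-dimensional face). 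Hence $v_\bb$ is a well-defined point depending affinely (indeed linearly) on $\bb$, and the map $\bb \mapsto v_\bb$ is linear. This is exactly what makes all the resulting conditions linear in $\bb$, so that the solution set is a polyhedral cone.

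First I would fix a vertex $v$ and analyze Lemma~\ref{lem:detb}'s conditions at $v$. Write $u_\bb$ for the intersection of \emph{all} hyperplanes $\langle \bfa_{i_j},\xx\rangle = b_{i_j}$, $1 \le j \le k$. The NEI condition says this intersection is nonempty. Since the first $d$ of these hyperplanes already meet in the single point $v_\bb$, the full intersection is nonempty if and only if $v_\bb$ lies on each of the remaining hyperplanes, i.e. $\langle \bfa_{i_j}, v_\bb\rangle = b_{i_j}$ for $d < j \le k$ — and when it is nonempty it equals $\{v_\bb\}$, so $u_\bb = v_\bb$. These are precisely the equalities $E_{v,F}(\bb)$ for $F$ an extra supporting facet of $v$. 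Thus condition~(i) of the corollary is equivalent to the NEI condition holding at every vertex $v$.

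Next, assuming NEI holds everywhere (so $u_\bb = v_\bb$ for every vertex), the no-passing condition at $v$ reads $\langle \bfa_i, v_\bb\rangle \le b_i$ for all $i \notin \{i_1,\dots,i_k\}$ — that is, $I_{v,F}(\bb)$ for every facet $F$ that is not a supporting facet of $v$. (For $F$ among the supporting facets but not in the first $d$, the corresponding inequality is already forced to be an equality by~(i), so there is nothing extra to impose; for $F$ among the first $d$ it is an equality by construction of $v_\bb$.) So, combining with Lemma~\ref{lem:detb}, $\bb \in \Def(P_0)$ iff conditions~(i) and~(ii) both hold. Finally, since each $E_{v,F}$ and $I_{v,F}$ is a linear equality/inequality in $\bb$ (as $\bb \mapsto v_\bb$ is linear and $\langle \bfa_F,\cdot\rangle$ is linear), and there are finitely many vertex-facet pairs, $\Def(P_0)$ is the intersection of finitely many halfspaces and hyperplanes through the origin — a polyhedral cone.

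The main obstacle — really the only point needing care — is the justification that the first $d$ supporting facets of $v$ have linearly independent normal vectors, so that $v_\bb$ is genuinely a well-defined point and the reduction "$u_\bb$ nonempty $\iff$ $v_\bb$ lies on the extra hyperplanes" is valid. I would handle this by noting that $v$, being a vertex of the full-dimensional polytope $P_0$, is the unique solution of $\langle \bfa_{i_j},\xx\rangle = b_{0,i_j}$, $1\le j\le k$; uniqueness forces $\mathrm{span}\{\bfa_{i_1},\dots,\bfa_{i_k}\} = W$, and minimality/ordering considerations (any maximal linearly independent subset has size exactly $d$) let us assume, after the fixed ordering, that $\bfa_{i_1},\dots,\bfa_{i_d}$ is such a subset — which is the content of the preceding definition. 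Everything else is bookkeeping over the finite set of vertex-facet pairs.
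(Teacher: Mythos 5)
Your proposal is correct and follows essentially the same route as the paper's proof: unpack Lemma~\ref{lem:detb} by identifying the NEI condition at each vertex with the equalities $E_{v,F}(\bb)$ for the extra supporting facets and the no-passing condition with the inequalities $I_{v,F}(\bb)$ for the non-supporting facets, then observe that $\bb \mapsto v_\bb$ is linear so the solution set is a polyhedral cone. One caveat: your parenthetical claim that linear dependence of the first $d$ supporting normals would force $v$ to lie on a positive-dimensional face is not true in general for $d \ge 4$ (for instance, a vertex whose normal cone is the cone over an octahedron has four linearly dependent extreme rays whose sum lies in the interior of the cone, so the corresponding four facets meet only in $v$); however, the well-definedness of $v_\bb$ is asserted without proof in the definition preceding the corollary, so this does not affect the equivalence you establish, and in the simple/simplicial settings to which the paper applies the result the issue does not arise.
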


\begin{proof}
One sees that condition (i) is equivalent to the NEI condition, and condition (ii) is equivalent to the no-passing condition. Moreover, since $v_\bb$ is the solution of a linear system, it is written as a linear combinations of entries in $\bb.$ Therefore, each equality or inequality is linear. So the solution set of $\bb$ is a polyhedral cone.
\end{proof}
\begin{remark}\label{rem:nef}
The deformation cone is related to the $\textrm{Nef}$ cone (see \cite[Definition 6.3.18]{cls}) of the toric variety associated with $\Sigma(P)$, as follows. Any polytope whose normal fan is a coarsening of $\Sigma(P)$ gives a \emph{basepoint free} divisor, which for toric varieties is the same as \emph{nef} (\cite[Theorem 6.3.12]{cls}) divisor. The difference is that the $\textrm{Nef}$ cone does not distinguish between translations of the same polytope, since they give the same divisor modulo rational equivalence. Hence, the $\textrm{Nef}$ cone is isomorphic to the deformation cone modulo translations.
\end{remark}

The number of inequalities in Corollary \ref{cor:ineqs} can be reduced. Given a polytope $P_0$, we say a facet $F$ is a \emph{neighbor} of a vertex $v$ and $(v,F)$ is a \emph{neighboring pair} of $P_0$, if $v\notin F$ but there exist a vertex $v'\in F$ such that $\{v, v'\}$ is an edge of $P_0$.

\begin{proposition}\label{prop:preredux}
Let $\bb \in \RR^m.$ The following are equivalent.
\begin{enumerate}
	\item $\bb \in \Def(P_0).$
	\item 
		Conditions (i) and (ii) of Corollary \ref{cor:ineqs} are satisfied.
	\item Condition (i) of Corollary \ref{cor:ineqs} is satisified, and all the inequalities $I_{v,F}(\bb)$ are satisfied, where $(v, F)$ is a neighboring vertex-facet pair of $P_0$. 
	\item Condition (i) of Corollary \ref{cor:ineqs} is satisified, and for any edge $e=\{v, v'\}$ of $P_0$, there exists $\lambda_e \in \RR_{\ge 0}$ such that $v- v' = \lambda_e (v_\bb - v'_\bb).$	
\end{enumerate}
\end{proposition}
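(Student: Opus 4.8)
The plan is to prove the chain of implications $(1)\Leftrightarrow(2)$, $(2)\Rightarrow(3)$, $(3)\Rightarrow(4)$, and $(4)\Rightarrow(2)$, so that all four statements become equivalent. The equivalence $(1)\Leftrightarrow(2)$ is already established in Corollary \ref{cor:ineqs}, so nothing needs to be done there. The implication $(2)\Rightarrow(3)$ is trivial, since a neighboring pair $(v,F)$ is in particular a pair where $F$ is not a supporting facet of $v$, so the inequalities in (3) form a subset of those in (2). The real content is in $(3)\Rightarrow(4)$ and $(4)\Rightarrow(2)$.

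For $(3)\Rightarrow(4)$, I would fix an edge $e=\{v,v'\}$ of $P_0$ and analyze the geometry of $v_\bb$ and $v'_\bb$. Since $\{v,v'\}$ is an edge, it lies on exactly $d-1$ common facets of $P_0$; call the hyperplanes they determine (with right-hand sides from $\bb$) $H_1,\dots,H_{d-1}$. The edge direction $v-v'$ is, up to scaling, the unique direction lying in all of $\bigcap_{j} H_j^{\mathrm{lin}}$ where $H_j^{\mathrm{lin}}$ is the linear hyperplane parallel to $H_j$. Now $v_\bb$ is defined by intersecting the first $d$ supporting hyperplanes of $v$, which include at least $d-1$ of the facets through $e$ (one needs a small bookkeeping argument here: the first $d$ supporting facets of $v$ may not all contain $e$, but using condition (i) — the equalities $E_{v,F}(\bb)$ — one shows $v_\bb$ in fact lies on \emph{all} facet-hyperplanes through $v$, hence on all $d-1$ hyperplanes through $e$); similarly $v'_\bb$ lies on all those hyperplanes. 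Therefore $v_\bb - v'_\bb$ is parallel to $v-v'$, say $v-v' = \lambda_e (v_\bb - v'_\bb)$ for some $\lambda_e\in\RR$. To see $\lambda_e \ge 0$, pick the facet $F$ containing $v'$ but not $v$ that is "opposite" to the edge direction at $v$ (the unique facet through $v'$ not among the $d-1$ common ones); then $v'\in F$ and $v\notin F$, so $(v,F)$ is a neighboring pair, the inequality $I_{v,F}(\bb)$ gives $\langle \bfa_F, v_\bb\rangle \le b_F = \langle \bfa_F, v'_\bb\rangle$, which forces $v_\bb$ to be on the correct side of $H_F$, i.e. the displacement $v_\bb - v'_\bb$ points in the same (non-negative) multiple of $v-v'$. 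Making this sign argument clean is the step I expect to be the main obstacle, since one must carefully track orientations and handle the case $v_\bb = v'_\bb$ (where $\lambda_e$ is not determined but any value, in particular $0$, works).

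For $(4)\Rightarrow(2)$, I would show that the condition "$v-v' = \lambda_e(v_\bb - v'_\bb)$ with $\lambda_e\ge 0$ for every edge" implies all the inequalities $I_{v,F}(\bb)$ of condition (ii). Fix a vertex $v$ and a facet $F$ not supporting $v$. I want $\langle \bfa_F, v_\bb\rangle \le b_F$. Consider the face-lattice walk: since $v_\bb$ depends linearly on $\bb$ and the equalities in (i) pin $v_\bb$ onto all facet-hyperplanes through $v$, the points $\{v_\bb : v \text{ vertex of } P_0\}$ together with the recorded edge relations $v_\bb - v'_\bb = \lambda_e^{-1}(v-v')$ (when $\lambda_e>0$) describe a "weak deformation" of the vertex set. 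One then argues by walking from $v$ along a path of edges toward $F$: because at each edge step the displacement of $v_\bb$ is a non-negative multiple of the corresponding edge direction of $P_0$, the quantity $\langle\bfa_F, \cdot\rangle$ changes monotonically in the same direction as it does along $P_0$, and since it reaches its maximum $b_F$ on $F$ in $P_0$, the value $\langle\bfa_F, v_\bb\rangle$ cannot exceed $b_F$. Here I would invoke the standard fact that from any vertex $v$ there is a monotone edge-path (with respect to the linear functional $\langle\bfa_F,\cdot\rangle$) to a vertex on $F$; summing the inequalities along this path yields $I_{v,F}(\bb)$. This closes the cycle and completes the proof.
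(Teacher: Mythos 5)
Your proposal is correct and follows essentially the same route as the paper's proof: the same cycle of implications, the same argument that condition (i) forces $v_\bb-v'_\bb$ into the one-dimensional orthogonal complement of the $d-1$ facet normals through the edge with the sign pinned down by $I_{v,F}(\bb)$ for a facet $F$ containing $v'$ but not $v$, and the same monotone edge-path argument for $(4)\Rightarrow(2)$. No substantive differences to report.
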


\begin{proof}
	The equivalence between (1) and (2) is assured by Corollary \ref{cor:ineqs}, and it is clear that (2) implies (3). So it suffices to show (3) implies (4) and (4) implies (2).

\noindent	\underline{``$(3) \implies (4)$'':} 
%
There exist $d-1$ facets $F_{j_1},\dots, F_{j_{d-1}}$ of $P_0,$ such that the edge $e=\{v, v'\}$ in $P_0$ is the intersection of them. Thus $v-v'$ is in the one dimensional space that is orthogonal to the $(d-1)$-space spanned by $\bfa_{j_1},\dots, \bfa_{j_{d-1}}.$ By the definition of $v_\bb$ and $v'_\bb$ and because condition (i) of Corollary \ref{cor:ineqs} is satisfied, one sees that $v_\bb-v'_\bb$ should be in the same one dimensional space. Therefore, $v-v' = \lambda_e(v_\bb - v'_\bb)$ for some $\lambda_e \in \RR.$ Thus, it is left to show that $\lambda_e \ge 0.$

	Let $F$ be a facet that $v'$ lies on but $v$ does not. Since $v \in P_0$  has to satisfy the strict inequality in $I_{v,F}(\bb_0),$ we have that
	$\langle \bfa_F, v \rangle < b_{0, F} = \langle \bfa_F, v' \rangle,$
	which is equivalent to 
		$\langle \bfa_F, v-v' \rangle < 0.$
	On the other hand, as $(v,F)$ is a neighboring pair, we also have $I_{v,F}(\bb)$ holds, which is equivalent to 
	$\langle \bfa_F, v_\bb -v'_\bb \rangle = \lambda_e \langle \bfa_F, v-v' \rangle \le 0.$	Hence, $\lambda_e \ge 0.$

	\noindent	\underline{``$(4) \implies (2)$'':} 
	Let $(v, F)$ be a vertex-facet pair of $P_0$ such that $v$ does not lie on $F.$ 
	Let $v_0 = v$ and pick a point $\xx \in F.$ Then 
	\[ \langle \bfa_F, v_0 \rangle < b_{0,F} = \langle \bfa_F, \xx \rangle,\]
	Since $\xx - v_0$ is a nonnegative linear combination of rays in $\{ u - v_0 \ : \ \{v_0, u\} \text{ is an edge of $P_0$ } \},$ there exists a vertex $v_1$ such that $\{v_0,v_1\}$ is an edge and 
	\[ \langle \bfa_F, v_0 \rangle < \langle \bfa_F, v_1 \rangle.\]
Continuing this procedure, we can construct a sequence of vertices of $P_0:$ $v_0=v, v_1, v_2, \dots, v_\ell$ such that $\{v_i, v_{i+1}\}$ is an edge of $P_0$ for each $i,$ and 
	\[ \langle \bfa_F, v \rangle < \langle \bfa_F, v_1 \rangle < \cdots < \langle \bfa_F, v_\ell \rangle = b_{0,F}.\]
	Using the assumption of (4), we get
	\[ \langle \bfa_F, v_\bb \rangle \le \langle \bfa_F, \left(v_1\right)_{\bb} \rangle \le \cdots \le \langle \bfa_F, \left(v_\ell\right)_{\bb} \rangle = b_{F},\]
	which is exactly the inequality $I_{v,F}(\bb)$ as desired.
%
%
\end{proof}

In this article, we will use the equivalence between (1) and (3) of Proposition \ref{prop:preredux} to determine the deformation cone $\Def(P_0).$ 
\begin{remark}
Part (3) of the proposition allows us to reduce the number of inequalities in determining the deformation cone. In the toric varieties language, this correspond to the fact that it is enough to check positivity on each torus invariant curve. See \cite[Theorem 6.3.12 part (c)]{cls}.
\end{remark}
It is undesirable to compute $v_\bb$ and then compute $\langle \bfa_{F}, v_\bb \rangle$ for each individual $E_{v,F}$ or $I_{v,F}$. We find the following explicit formulation useful. 

\begin{lemma}\label{lem:rewritelhs}
	Let $(v,F)$ be a vertex-facet pair of $P_0$. Suppose $F_{i_1}, F_{i_2}, \dots, F_{i_d}$ are the first $d$ supporting facets of $v$. If $\bfa_F = \sum_{j=1}^d c_j \bfa_{i_j} = \sum_{j =1}^d c_j \bfa_{F_{i_j}},$ then the left hand side of $E_{v,F}$ and $I_{v,F}$ becomes 
	$\displaystyle \sum_{j =1}^d c_j b_{{i_j}}$ or equivalently
	$\displaystyle \sum_{j =1}^d c_j b_{F_{i_j}}.$
\end{lemma}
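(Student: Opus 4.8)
The statement is essentially a bookkeeping identity, so the proof is short. The key point is that $v_\bb$, by definition, is the unique solution of the linear system $\langle \bfa_{i_j}, \xx\rangle = b_{i_j}$ for $j=1,\dots,d$. First I would observe that since $v$ is a vertex of the full-dimensional polytope $P_0$ and $F_{i_1},\dots,F_{i_d}$ are its first $d$ supporting facets, the vectors $\bfa_{i_1},\dots,\bfa_{i_d}$ form a basis of $W$ (they are $d$ vectors whose common zero set is a single point in $V$, hence linearly independent, hence a basis since $\dim W = d$). Consequently the expression $\bfa_F = \sum_{j=1}^d c_j\bfa_{i_j}$ exists and the coefficients $c_j$ are uniquely determined.

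**Key step.** Now pair $\bfa_F$, written in this basis, against the point $v_\bb$:
\[
\langle \bfa_F, v_\bb\rangle = \Big\langle \sum_{j=1}^d c_j \bfa_{i_j},\, v_\bb \Big\rangle = \sum_{j=1}^d c_j \langle \bfa_{i_j}, v_\bb\rangle = \sum_{j=1}^d c_j\, b_{i_j},
\]
where the last equality uses precisely the defining equations $\langle \bfa_{i_j}, v_\bb\rangle = b_{i_j}$ of $v_\bb$. Since the left-hand side of both $E_{v,F}(\bb)$ and $I_{v,F}(\bb)$ is by definition $\langle \bfa_F, v_\bb\rangle$, this is exactly the claimed formula $\sum_{j=1}^d c_j b_{i_j} = \sum_{j=1}^d c_j b_{F_{i_j}}$ (the two forms being identical under the notational convention $b_{F_{i_j}} = b_{i_j}$).

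**Main obstacle.** There is no real obstacle; the only thing worth being careful about is justifying that $\{\bfa_{i_1},\dots,\bfa_{i_d}\}$ is genuinely a basis so that the representation of $\bfa_F$ makes sense and the $c_j$ are well-defined — this follows from the fact that a vertex of a full-dimensional polytope is cut out by $d$ linearly independent facet normals, which is standard (see e.g. \cite{zie}). One might also remark that the resulting linear functional $\bb \mapsto \sum_j c_j b_{i_j}$ is exactly the linear expression for $\langle \bfa_F, v_\bb\rangle$ referred to in the proof of Corollary~\ref{cor:ineqs}, which is why this lemma is the computationally convenient form.
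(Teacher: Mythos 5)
Your proof is correct and is essentially identical to the paper's: both expand $\bfa_F$ in the basis $\{\bfa_{i_1},\dots,\bfa_{i_d}\}$, use bilinearity of the pairing, and substitute the defining equations $\langle \bfa_{i_j}, v_\bb\rangle = b_{i_j}$ of $v_\bb$. The only difference is that you additionally justify that the $\bfa_{i_j}$ form a basis (so the $c_j$ are well-defined), a point the paper leaves implicit since the lemma is stated conditionally on the existence of such an expression.
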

\begin{proof}
$\displaystyle
	\langle \bfa_{F}, v_\bb \rangle = \left\langle \sum_{j=1}^d c_j\bfa_{F_{i_j}}, v_\bb \right\rangle
	 =\sum_{j=1}^d c_j \langle \bfa_{F_{i_j}}, v_\bb \rangle
	 = \sum_{j=1}^d c_j b_{F_{i_j}}.
$ 
\end{proof}


\subsection*{Deformation cones of simple polytopes} Finally, we apply our results to simple polytopes. We start with the following preliminary lemma.

\begin{lemma}\label{lem:edgeeq}
	Suppose $P_0$ is simple. Let $e = \{v, v'\}$ be an edge of $P_0.$ Suppose $F, F_{i_1},\dots, F_{i_{d-1}}$ are the supporting facets of $v,$ and $F', F_{i_1},\dots, F_{i_{d-1}}$ be the supporting facets of $v'.$ There is a unique solution $(c_F, c_{F'}, c_1, \dots, c_{d-1})$ up to scale to 
	\begin{equation}\label{equ:edgeeq}
		 \sum_{j=1}^{d-1} c_j \bfa_{F_{i_j}} = c_F \bfa_F + c_{F'} \bfa_{F'}
	\end{equation}
	such that $c_F c_{F'} > 0.$
Hence, there is a unique solution up to \emph{positive} scale to the above equation such that $c_F > 0, c_{F'} > 0.$
\end{lemma}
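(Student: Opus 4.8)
The plan is to set up the linear algebra carefully using simplicity of $P_0$. Since $P_0$ is simple and $d$-dimensional, each vertex lies on exactly $d$ facets, so $F, F_{i_1},\dots,F_{i_{d-1}}$ are \emph{all} the supporting facets of $v$, and their normal vectors $\bfa_F,\bfa_{F_{i_1}},\dots,\bfa_{F_{i_{d-1}}}$ form a basis of $W$ (they must be linearly independent, since the corresponding hyperplanes meet in the single point $v$). Likewise $\bfa_{F'},\bfa_{F_{i_1}},\dots,\bfa_{F_{i_{d-1}}}$ is a basis of $W$. In particular $\bfa_F$ can be written uniquely as a linear combination of $\bfa_{F'},\bfa_{F_{i_1}},\dots,\bfa_{F_{i_{d-1}}}$, say $\bfa_F = -c_{F'}\bfa_{F'} - \sum_{j=1}^{d-1} c_j'\bfa_{F_{i_j}}$ for unique scalars; rearranging gives a solution of \eqref{equ:edgeeq} with $c_F = 1$. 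This already establishes existence of a solution with $c_F\neq 0$, and uniqueness up to scale follows because the $d$ vectors $\bfa_F,\bfa_{F'},\bfa_{F_{i_1}},\dots,\bfa_{F_{i_{d-1}}}$ span a space of dimension $d$ with exactly one linear dependence among them (any $d$ of them being independent).

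First I would record uniqueness-up-to-scale formally: the $(d+1)$ vectors $\bfa_F,\bfa_{F'},\bfa_{F_{i_1}},\dots,\bfa_{F_{i_{d-1}}}$ lie in the $d$-dimensional space $W$, so their space of linear relations is exactly one-dimensional; any nonzero relation is a scalar multiple of a fixed one $(c_F,c_{F'},c_1,\dots,c_{d-1})$. So it only remains to check that in this (essentially unique) relation we have $c_F c_{F'} > 0$, i.e. $c_F$ and $c_{F'}$ are both nonzero and of the same sign. Nonzero is clear: if $c_F = 0$ then $c_{F'}\bfa_{F'} = \sum c_j\bfa_{F_{i_j}}$ would be a nontrivial relation among $\bfa_{F'},\bfa_{F_{i_1}},\dots,\bfa_{F_{i_{d-1}}}$, contradicting that these are a basis (and similarly for $c_{F'}$).

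The main work is the sign. Here I would use the geometry of the edge $e = \{v,v'\}$, which is the intersection of the $d-1$ facets $F_{i_1},\dots,F_{i_{d-1}}$; hence the direction vector $v' - v$ spans the line orthogonal to $\bfa_{F_{i_1}},\dots,\bfa_{F_{i_{d-1}}}$. Pairing \eqref{equ:edgeeq} with $v'-v$ kills the sum on the left, giving
\[
0 = c_F\,\langle \bfa_F, v'-v\rangle + c_{F'}\,\langle \bfa_{F'}, v'-v\rangle .
\]
Now I want the two pairings here to have opposite (strictly nonzero) signs; then the displayed equation forces $c_F$ and $c_{F'}$ to have the same sign, which is exactly $c_F c_{F'}>0$. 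To see the pairings have opposite strict signs: $F$ supports $v$ but not $v'$, so $\langle \bfa_F, v\rangle = b_{0,F}$ while $\langle \bfa_F, v'\rangle < b_{0,F}$ (strict since $v'\notin F$ and the inequality is facet-defining); hence $\langle \bfa_F, v'-v\rangle < 0$. Symmetrically $F'$ supports $v'$ but not $v$, so $\langle \bfa_{F'}, v'-v\rangle > 0$. Plugging in, the equation becomes $0 = c_F\cdot(\text{negative}) + c_{F'}\cdot(\text{positive})$, so $c_F$ and $c_{F'}$ must share a sign, i.e.\ $c_F c_{F'}>0$, as claimed. Finally, replacing $(c_F,c_{F'},c_1,\dots,c_{d-1})$ by its negative if necessary, we get the unique solution up to positive scale with $c_F>0$ and $c_{F'}>0$. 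I expect the only subtle point is making sure all strict inequalities above are genuinely strict, which is where the hypothesis that every defining inequality of $P_0$ is facet-defining (Setup \ref{setup1}) is used: it guarantees $v'\notin F \implies \langle\bfa_F,v'\rangle < b_{0,F}$.
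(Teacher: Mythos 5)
Your proof is correct and follows essentially the same route as the paper's: linear independence of the two normal bases at $v$ and at $v'$ gives existence and uniqueness up to scale of the relation, and the sign claim comes from $\bfa_F$ and $\bfa_{F'}$ lying on opposite sides of the hyperplane spanned by $\bfa_{F_{i_1}},\dots,\bfa_{F_{i_{d-1}}}$ --- your pairing of the relation with the edge direction $v'-v$ is precisely an explicit certificate for that ``opposite sides'' assertion, which the paper states without justification, so your write-up is if anything more complete. One small correction: your parenthetical claim that any $d$ of the $d+1$ vectors are independent is false in general (in Example \ref{ex:exampleprop} the relation $0\cdot\bfa_1=\bfa_2+\bfa_3$ exhibits the dependent pair $\bfa_2,\bfa_3$ with $d=2$); fortunately you never use it, since one-dimensionality of the space of relations already follows from the $d+1$ vectors spanning the $d$-dimensional space $W$.
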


\begin{proof}
	The unique existence of a solution to \eqref{equ:edgeeq} such that $c_F c_F' \neq 0$ follow from the fact that both the set $\bfa_F, \bfa_{F_{i_1}},$ $\dots,$ $\bfa_{F_{i_{d-1}}}$ and the set $\bfa_{F'}, \bfa_{F_{i_1}}, \dots, \bfa_{F_{i_{d-1}}}$ are linearly independent. The numbers $c_F$ and $c_F'$ have the same sign because $\bfa_F$ and $\bfa_{F'}$ are on two different sides of the $(d-1)$-dimensional space spanned $\bfa_{F_{i_1}}, \dots, \bfa_{F_{i_{d-1}}}$ and $\sum_{j=1}^{d-1} c_j \bfa_{F_{i_j}}$ is a vector in this space.
\end{proof}
\begin{remark}
	Equation \eqref{equ:edgeeq} is called the \emph{wall condition} in \cite[Chapter 6]{cls}. See \cite[Figure 17, page 301]{cls}.
\end{remark}

\begin{definition}
	Assume all the hypotheses in Lemma \ref{lem:edgeeq} and let $(c_F, c_{F'}, c_1, \dots, c_{d-1})$ be the unique solution up to positive scale to \eqref{equ:edgeeq} assumed by Lemma \ref{lem:edgeeq}. (So $c_F, c_{F'} > 0$.) We associate to the edge $e=\{v, v'\}$ an inequality:
	\[ I_e(\bb): \sum_{j=1}^{d-1} c_j b_{F_{i_j}} \le c_F b_F + c_{F'}  b_{F'}.\]
\end{definition}

We now reach the main result of this part. 
\begin{proposition}\label{prop:redux} Suppose $P_0$ is as given in Setup \ref{setup1} and is simple. Let $\bb \in \RR^m.$ Then $\bb \in \Def(P_0)$ if and only if all the inequalities $I_{e}(\bb)$ are satisfied, where $e$ is an edge of $P_0.$ 
\end{proposition}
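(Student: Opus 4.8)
The plan is to deduce Proposition~\ref{prop:redux} from the equivalence of (1) and (3) in Proposition~\ref{prop:preredux}, by translating the neighboring-pair inequalities $I_{v,F}(\bb)$ into the edge inequalities $I_e(\bb)$. First I would note that simplicity makes the job substantially easier: since every vertex of $P_0$ lies on exactly $d$ facets, no vertex has an extra supporting facet, so Condition~(i) of Corollary~\ref{cor:ineqs} holds vacuously. Hence by Proposition~\ref{prop:preredux}, $\bb \in \Def(P_0)$ if and only if $I_{v,F}(\bb)$ holds for every neighboring pair $(v,F)$ of $P_0$. It therefore remains to show that the system $\{I_{v,F}(\bb) : (v,F)\text{ neighboring}\}$ and the system $\{I_e(\bb) : e\text{ an edge of }P_0\}$ cut out the same cone.

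The heart of the argument is a term-by-term comparison along a single edge. Fix an edge $e = \{v, v'\}$ and adopt the notation of Lemma~\ref{lem:edgeeq}: let $F, F_{i_1}, \dots, F_{i_{d-1}}$ be the supporting facets of $v$ and $F', F_{i_1}, \dots, F_{i_{d-1}}$ those of $v'$, and let $(c_F, c_{F'}, c_1, \dots, c_{d-1})$ be the solution of the wall condition \eqref{equ:edgeeq} with $c_F, c_{F'} > 0$. Because $P_0$ is simple, the facet lists of $v$ and $v'$ differ in exactly one slot, so $v' \in F'$ while $v \notin F'$; since $\{v,v'\}$ is an edge, the pair $(v, F')$ is a neighboring pair, and symmetrically so is $(v', F)$. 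Solving \eqref{equ:edgeeq} for $\bfa_{F'}$ writes it as a linear combination of the normals $\bfa_F, \bfa_{F_{i_1}}, \dots, \bfa_{F_{i_{d-1}}}$ of the supporting facets of $v$, namely $\bfa_{F'} = -\tfrac{c_F}{c_{F'}}\bfa_F + \sum_{j}\tfrac{c_j}{c_{F'}}\bfa_{F_{i_j}}$. Plugging this into Lemma~\ref{lem:rewritelhs} rewrites the left-hand side of $I_{v,F'}(\bb)$ as $-\tfrac{c_F}{c_{F'}}b_F + \sum_j \tfrac{c_j}{c_{F'}}b_{F_{i_j}}$, and clearing the positive factor $c_{F'}$ turns $I_{v,F'}(\bb)$ into exactly $\sum_j c_j b_{F_{i_j}} \le c_F b_F + c_{F'} b_{F'}$, i.e.\ into $I_e(\bb)$. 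Running the same computation with the roles of $v$ and $v'$ interchanged shows $I_{v',F}(\bb)$ is equivalent to $I_e(\bb)$ as well.

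Finally I would close the loop by matching the two index sets. The previous paragraph shows each edge inequality $I_e(\bb)$ coincides with a neighboring-pair inequality (indeed with two of them). Conversely, given any neighboring pair $(w, G)$, by definition there is a vertex $w' \in G$ with $\{w, w'\}$ an edge; since $P_0$ is simple and $w \notin G$ but $w' \in G$, the facet $G$ must be the unique supporting facet of $w'$ that fails to contain $w$, so $(w, G)$ is precisely the pair ``$(v, F')$'' attached to the edge $e = \{w, w'\}$, and hence $I_{w,G}(\bb)$ is one of the edge inequalities. Therefore $\bigwedge_{(v,F)} I_{v,F}(\bb)$ over neighboring pairs is logically equivalent to $\bigwedge_{e} I_e(\bb)$ over edges, which combined with the first paragraph yields the claim.

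I expect the main obstacle to be the bookkeeping in the middle paragraph: one must be careful that ``the supporting facets of $v$'' really form a basis-sized list (this is exactly where simplicity enters), that the scalar expansion extracted from the wall condition is the one Lemma~\ref{lem:rewritelhs} expects, and that multiplying through by $c_{F'}$ is both legitimate and direction-preserving because $c_{F'} > 0$. The index-matching in the last paragraph also quietly uses simplicity, to guarantee that $G$ is uniquely determined by $w$ and $w'$; without it a single edge could a priori give rise to the same neighboring pair in several ways, but even then the equivalence of the two conjunctions would still go through.
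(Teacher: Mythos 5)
Your proposal is correct and follows essentially the same route as the paper: invoke the equivalence of (1) and (3) in Proposition~\ref{prop:preredux}, observe that simplicity makes condition (i) vacuous, and identify the neighboring-pair inequalities with the edge inequalities. The only difference is that you spell out explicitly (via the wall condition and Lemma~\ref{lem:rewritelhs}) the computation the paper leaves as ``one can verify,'' and that computation is carried out correctly.
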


\begin{proof} We use the equivalence between (1) and (3) of Proposition \ref{prop:preredux}.
Since $P_0$ is simple. it is clear that condition (i) of Corollary \ref{cor:ineqs} can be ignored. Furthermore, any ordered pair of adjacent vertex $(v, v')$ of $P_0$ determines a unique neighboring vertex-facet pair $(v, F)$, where $F$ is the unique supporting facet of $v'$ that does not support $v,$ and any pair $(v,F)$ arises (not necessarily uniquely) this way. Therefore, we can change the indexing of the inequalities in condition (ii) of Corollary \ref{cor:ineqs} to $(v,v').$ 
Finally, one can verify if $e=\{v, v'\}$ is an edge, the inequality $I_e(\bb)$ is equivalent to both the inequality associated to $(v,v')$ and the one associated to $(v', v).$ Then the conclusion follows.
\end{proof}

\begin{example}\label{ex:exampleprop} 
We go back to our Example \ref{ex:example}, illustrated in Figure \ref{fig:exfigure}. We draw the polytope $P_0$ with a labeling of its vertices, and draw the normal fan $\Sigma(P_0)$ of $P_0$ in Figure \ref{fig:normalfan}.

\begin{figure}[t!]
\begin{tikzpicture}
	
	\begin{scope}[xshift=7cm, scale=0.8]

\draw[thick,->] (0,0) -- (-1,0);
\node[left] at (-1,0) {$\scriptstyle \bfa_1=(-1,0)$};
\draw[thick,->] (0,0) -- (0,1);
\node[above] at (0,1) {$\scriptstyle \bfa_2=(0,1)$};
\draw[thick,->] (0,0) -- (0,-1);
\node[below] at (0,-1) {$\scriptstyle \bfa_3=(0,-1)$};
\draw[thick,->] (0,0) -- (1,-1);
\node[above right] at (0.8,-0.8) {$\scriptstyle \bfa_4=(1,-1)$};
\node at (-3,1){$\textcolor{blue}{\Sigma(P_0)}:$};
\end{scope}

\begin{scope}[xshift=-5cm, scale=0.9]
\draw (6,0.5)--(8,0.5)--(7,-0.5)--(6,-0.5)--cycle;
\node[above] at (6,0.5) {$v$};
\node[above] at (8,0.5){$w$};
\node[below] at (7,-0.5) {$x$};
\node[below] at (6,-0.5) {$y$};
\node at (6.7,0){$\textcolor{blue}{P_0}$};
\draw[fill] (6,-0.5) circle [radius=0.08];
\draw[fill] (6,0.5) circle [radius=0.08];
\draw[fill] (7,-0.5) circle [radius=0.08];
\draw[fill] (8,0.5) circle [radius=0.08];
\end{scope}
\end{tikzpicture}
\caption{Normal fan of polytope $P_0$ in Example \ref{ex:example}.}\label{fig:normalfan}
\end{figure}
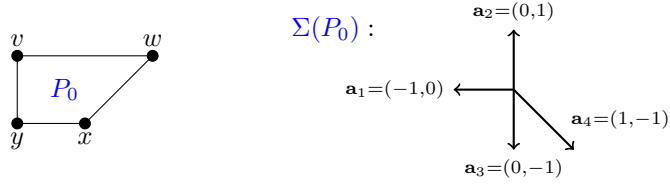
Now we apply Proposition \ref{prop:redux} to find the inequalities that define $\Def(P_0).$ 
	Let $e_1 = \{v, y\}.$ The vertex $v$ lies on facets $F_1$ and $F_2$, and the vertex $y$ lies on facets $F_1$ and $F_3.$ We have $0 \cdot \bfa_1=\bfa_2 + \bfa_3.$ This gives the inequality $I_{e_1}: 0 \le b_2 + b_3.$
Similarly, for $e_2 := \{v, w\},$ we have $-\bfa_2 =\bfa_1 + \bfa_4,$ which gives $I_{e_2}: - b_2 \le b_1 + b_4;$ for $e_3 = \{y,x\},$ we have $\bfa_3=\bfa_1 + \bfa_4,$ which gives $I_{e_3}: b_3 \le b_1 + b_4;$ for $e_4 = \{x,w\},$ we have $0 \cdot \bfa_4=\bfa_2 + \bfa_3 ,$ which gives $I_{e_4}: 0 \le b_2 + b_3.$

Note that two of the four inequalities $I_{e_1}$ and $I_{e_4}$ are the same, and $I_{e_2}$ follows from $I_{e_1}$ and $I_{e_3},$ so is redundant. Therefore, $\Def(P_0)$ is defined by two inequalities in $\RR^4:$
\begin{equation}
I_{e_1}=I_{e_4}: 0 \le b_2+ b_3, \qquad I_{e_3}: b_3 \le b_1 + b_4.
	\label{equ:exdefcone}
\end{equation}
Among the three vectors given in Example \ref{ex:example}, we can verify that $\bb_1=(3,2,0,2.5), \bb_2=(1,2,1,0)$ satisfy the above two inequalities, and $\bb_3=(1,2,2,0)$ does not satisfy the inequality $I_{e_3}.$ This agrees with the assertion that $\bb_1, \bb_2 \in \Def(P_0)$ and $\bb_3 \not\in \Def(P_0).$

We remark that $\Def(P_0)$ defined by \eqref{equ:exdefcone} is not pointed. Indeed, for any deformation $Q$ of $P_0,$ any translation of $Q$ is also a deformation of $P_0.$ We may consider two polytopes are equivalent if one is obtained from another by translation. Under this equivalence, the collection of the deforming vectors gives the nef cone $\Nef(P_0)$ of $P_0$. (See Remark \ref{rem:nef}.) 
One sees that $\Nef(P_0)$ is computed from $\Def(P_0)$ by quotienting out the span of the two columns of $\A$ 
which are $(-1,0,0,1)^T,(0,1,-1,-1)^T.$
In this quotient we write everything in terms of $b_3,b_4$ since we have $b_1=b_4$ and $b_2=b_3+b_4$. So the nef cone $\Nef(P_0)$ is defined by
\[ 0 \leq  2b_3+b_4, \qquad  0 \leq  -b_3+2b_4, \]
 where $b_3,b_4$ are the coordinates of $\RR^2$. This is always a pointed cone.
\end{example}


\subsection*{Deformation cones of simplicial projective fans}
A fan $\Sigma$ is \emph{simplicial} if every cone in it is simplicial. This means that every $k$-dimensional cone in $\Sigma$ is spanned by exactly $k$ rays. One sees that $P$ being simple is equivalent to that $\Sigma(P)$ is simplicial. In particular, edges of $P$ are in bijection with a pair of adjacent maximal cones in $\Sigma(P),$ where we say two maximal cones are \emph{adjacent} if their spanning ray sets differ by exactly one ray. 
We can easily translate Lemma \ref{lem:edgeeq} and Proposition \ref{prop:redux} to versions for simplicial fans using the connection between a simple polytope and its simplicial normal fan. We omit the modified version of Lemma \ref{lem:edgeeq}, but restate Proposition \ref{prop:redux} since the new version will be the main one we use in Sections \ref{sec:GP} and \ref{sec:NBF}. 
\begin{definition}\label{defn:fanineq}
	Suppose $\Sigma_0$ is simplicial. Let $\left\{\bfa_F, \bfa_{F_{i_1}},\dots, \bfa_{F_{i_{d-1}}} \right\}$ and $\Big\{ \bfa_{F'}, \bfa_{F_{i_1}},$ $\dots,$ $\bfa_{F_{i_{d-1}}} \Big\}$ be the sets of spanning rays of two adjacent maximal cones $\sigma$ and $\sigma'$ in $\Sigma_0.$ 
Suppose $(c_F, c_{F'}, c_1, \dots, c_{d-1})$ is the unique solution up to positive scale to \eqref{equ:edgeeq} assumed by Lemma \ref{lem:edgeeq}. (So $c_F, c_{F'} > 0$.) We associate to the pair $\{\sigma, \sigma'\}$ an inequality:
\[ I_{ \{\sigma,\sigma'\}} (\bb): \sum_{j=1}^{d-1} c_j b_{F_{i_j}} \le c_F b_F + c_{F'}  b_{F'}.\]	
\end{definition}
\begin{proposition}\label{prop:reduxfan} Suppose $\Sigma_0$ is as given in Setup \ref{setup2} and is simplicial. Let $\bb \in \RR^m.$ Then $\bb \in \Def(\Sigma_0)$ if and only if all the inequalities $I_{ \{\sigma, \sigma'\}}(\bb)$ are satisfied, where $\{ \sigma, \sigma'\}$ is a pair of adjacent maximal cones in $\Sigma_0.$ 
\end{proposition}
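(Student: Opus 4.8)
The plan is to derive Proposition~\ref{prop:reduxfan} as a translation of Proposition~\ref{prop:redux} through the correspondence between a simple polytope and its simplicial normal fan. First I would observe that, by Proposition~\ref{prop:deform} and the remarks following it, $\Def(\Sigma_0)$ is well-defined independently of which polytope we pick, so it suffices to fix one polytope $P_0$ whose normal fan is $\Sigma_0$: such a $P_0$ exists because $\Sigma_0$ is projective (Setup~\ref{setup2}), and $P_0$ is full-dimensional since $0 \in \Sigma_0$, and $P_0$ is simple since $\Sigma_0$ is simplicial. Moreover, after possibly discarding redundant inequalities, we may assume $P_0$ has exactly $m$ facets $F_1,\dots,F_m$ with outer normals $\bfa_1,\dots,\bfa_m$, the very rays generating the $1$-dimensional cones of $\Sigma_0$; then $\Def(\Sigma_0) = \Def(P_0)$ lives in the same $\RR^m$ in both pictures.

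Next I would set up the dictionary between the two indexing sets. Under the standard duality between faces of $P_0$ and cones of $\Sigma(P_0) = \Sigma_0$, vertices of $P_0$ correspond to maximal cones of $\Sigma_0$, with a vertex $v$ lying on facet $F_i$ exactly when $\bfa_i$ is a spanning ray of the cone $\sigma_v$ dual to $v$; and edges $e = \{v,v'\}$ of $P_0$ correspond to pairs of adjacent maximal cones $\{\sigma_v, \sigma_{v'}\}$, where ``adjacent'' means the spanning ray sets differ in exactly one ray — this is precisely the sentence already asserted in the paragraph preceding Definition~\ref{defn:fanineq}, so I would simply invoke it. Under this correspondence, if $F,F_{i_1},\dots,F_{i_{d-1}}$ are the supporting facets of $v$ and $F',F_{i_1},\dots,F_{i_{d-1}}$ those of $v'$, then $\{\bfa_F,\bfa_{F_{i_1}},\dots,\bfa_{F_{i_{d-1}}}\}$ and $\{\bfa_{F'},\bfa_{F_{i_1}},\dots,\bfa_{F_{i_{d-1}}}\}$ are exactly the spanning ray sets of $\sigma_v$ and $\sigma_{v'}$. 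Hence the equation \eqref{equ:edgeeq} attached to the edge $e$ in Lemma~\ref{lem:edgeeq} is literally the same equation attached to the pair $\{\sigma_v,\sigma_{v'}\}$ in Definition~\ref{defn:fanineq}, and its unique-up-to-positive-scale solution $(c_F,c_{F'},c_1,\dots,c_{d-1})$ is the same in both; therefore $I_e(\bb)$ and $I_{\{\sigma_v,\sigma_{v'}\}}(\bb)$ are the identical linear inequality.

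Having matched the inequalities one-for-one, I would conclude by quoting Proposition~\ref{prop:redux}: $\bb \in \Def(P_0)$ if and only if $I_e(\bb)$ holds for every edge $e$ of $P_0$, which by the dictionary is the same as $I_{\{\sigma,\sigma'\}}(\bb)$ holding for every pair $\{\sigma,\sigma'\}$ of adjacent maximal cones of $\Sigma_0$. Since $\Def(P_0) = \Def(\Sigma_0)$, this is exactly the claim.

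I do not expect any serious obstacle; this is a genuine ``rephrasing'' result. The only points requiring a little care are (i) making sure the chosen polytope $P_0$ has normal vectors matching the prescribed rays $\bfa_1,\dots,\bfa_m$ of $\Sigma_0$ with no redundant facet-inequalities (handled by the non-redundancy built into Setup~\ref{setup1} together with the hypothesis that the $\bfa_i$ generate the rays of $\Sigma_0$), and (ii) checking that the face–cone duality does send edges precisely to adjacent maximal-cone pairs and preserves the incidence ``$v \in F_i \iff \bfa_i$ spans $\sigma_v$'' — but since the excerpt already states both facts in the paragraph introducing simplicial fans, I would cite them rather than reprove them. So the main "work" is just bookkeeping to see that Definition~\ref{defn:fanineq} and the edge-inequality $I_e$ produce the same linear functional, which is immediate once the ray sets are identified.
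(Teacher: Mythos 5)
Your proposal is correct and is exactly the argument the paper intends: the paper itself gives no separate proof of Proposition \ref{prop:reduxfan}, stating only that it is the translation of Proposition \ref{prop:redux} through the simple-polytope/simplicial-normal-fan dictionary, which is precisely what you spell out. The details you fill in (existence and simplicity of $P_0$, the vertex--maximal-cone and edge--adjacent-pair correspondences, and the identification of $I_e$ with $I_{\{\sigma,\sigma'\}}$) are the right ones and contain no gap.
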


\subsection*{Back to Deformations}
We finish this section with a discussion on how to determine whether a polytope $Q$ is a deformation of $P_0$ provided that we have a description for the deformation cone $\Def(P_0).$ Although there is a one-to-one correspondence between deforming vectors $\bb \in \Def(P_0)$ and deformations of $P_0,$ if we take a polytope $Q$ that is defined by $\A \xx \le \bb$, knowing $\bb \not\in \Def(P_0)$ is not enough to conclude that $Q$ is not a deformation of $P_0.$ Indeed, we have seen in Examples \ref{ex:example} and \ref{ex:exampleprop} that $Q_3$ (in Figure \ref{fig:exfigure}) is defined by $\A \xx \le \bb_3$ where $\bb_3 \not\in \Def(P_0);$ but $Q_3$ is a deformation of $P_0$. 
It was discussed earlier that the reason for which $\bb_3$ is not a deforming vector is that the hyperplane defined by $-y=2$, i.e., the bottom horizontal line in the picture for $Q_3$, does not ``touch'' the polytope $Q_3.$ 
This turns out to be an important notion.
\begin{definition}
Suppose a polytope $Q \subset V$ is defined by the linear system $\A \xx \le \bb.$ We say an inequality $\langle \bfa_i, \xx \rangle \le b_i$ in the system is \emph{tight} for $Q$, if the equality is attained for some points in $Q.$ If all the inequalities in the system are tight, we say $\A \xx \le \bb$ is a \emph{tight} representation for $Q.$ 
\end{definition}
It is easy to see that $\A \xx \le \bb$ being a tight representation for $Q$ is a consequence of condition \ref{item:nopass} of Definition \ref{defn:deform0}, and thus is a necessary condition for $\bb$ being a deforming vector. With this concept of tight representations, we can use the knowledge of the deformation cone to verify whether a polytope $Q$ is a deformation of $P_0.$

\begin{lemma}\label{lem:checkdeform}
	Suppose $P_0$ is as described in Setup \ref{setup1}, and $Q$ is defined by a tight representation $\A \xx \le \bb.$ Then $Q$ is a deformation of $P_0$ if and only if $\bb \in \Def(P_0).$
\end{lemma}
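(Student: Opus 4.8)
The plan is to deduce the lemma directly from the definitions, together with the two uniqueness facts already established in the text: (i) a deformation $Q$ of $P_0$ comes with a \emph{unique} deforming vector $\bb'$, characterized by $b'_i = \max_{\xx \in Q}\langle \bfa_i, \xx\rangle$ for all $i$; and (ii) a tight representation $\A\xx \le \bb$ of a polytope $Q$ is likewise unique, with $b_i = \max_{\xx\in Q}\langle \bfa_i,\xx\rangle$. The whole statement then amounts to observing that these two characterizations coincide whenever both apply.

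For the ``if'' direction I would argue purely formally. Assume $\bb \in \Def(P_0)$. By the definition of the deformation cone, $\bb$ is the deforming vector of some deformation $Q'$ of $P_0$, and by condition \eqref{item:pts} of Definition \ref{defn:deform0} the polytope $Q'$ is exactly the solution set of $\A\xx \le \bb$. Since $Q$ is by hypothesis also the solution set of $\A\xx \le \bb$ (the same matrix, the same right-hand side), we get $Q = Q'$, hence $Q$ is a deformation of $P_0$. Note this implication uses neither tightness nor the facet-defining hypothesis of Setup \ref{setup1}.

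For the ``only if'' direction, assume $Q$ is a deformation of $P_0$, and let $\bb'$ be its (unique) deforming vector, so $b'_i = \max_{\xx\in Q}\langle\bfa_i,\xx\rangle$ for all $i$ by fact (i). Because $\A\xx \le \bb$ is a tight representation of $Q$, for each $i$ the linear functional $\langle\bfa_i,\cdot\rangle$ is bounded above on $Q$ by $b_i$ and attains that bound at some point of $Q$; hence $b_i = \max_{\xx\in Q}\langle\bfa_i,\xx\rangle = b'_i$. Therefore $\bb = \bb' \in \Def(P_0)$.

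The only point requiring care --- the ``main obstacle,'' such as it is --- is the logical bookkeeping: I must unpack ``$Q$ is a deformation of $P_0$'' as ``there exists a deforming vector $\bb'$ satisfying the two conditions of Definition \ref{defn:deform0},'' quote correctly the paragraph following that definition that pins down $b'_i = \max_{\xx\in Q}\langle\bfa_i,\xx\rangle$, and spell out why tightness forces $b_i$ to equal the same maximum. Once both sides are identified with this maximum there is no computation left to do.
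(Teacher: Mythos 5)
Your proof is correct and follows essentially the same route as the paper: the forward direction identifies both $\bb$ and the deforming vector $\bb'$ with $\max_{\xx\in Q}\langle\bfa_i,\xx\rangle$ via tightness, and the backward direction is the immediate observation that $\bb\in\Def(P_0)$ already names a deformation defined by the same system. Your write-up merely spells out the backward implication that the paper dismisses as obvious.
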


\begin{proof}
We only need to show the forward implication as the backward one is obvious. Suppose $Q$ is a deformation of $P_0.$ Then there exists $\bb' \in \Def(P_0)$ such that $Q$ is defined by $\A \xx \in \bb',$ which is a tight representation as well. By the definition of tightness, we have
\[ b_i = \max_{\xx \in Q} \langle \bfa_i, \xx \rangle = b_i', \quad \forall 1 \le i \le m. \]
Hence, $\bb = \bb' \in \Def(P_0).$
\end{proof}

\section{Generalized permutohedra and Braid Fan} \label{sec:GP}

In the following two sections, we work over the vector space $V_d = \{ \xx \in \RR^{d+1} \ : \langle\1, \xx\rangle = 0\} \subset \RR^{d+1}$ and its dual space $W_d = \RR^{d+1}/\1$, where $\1 = (1, 1, \dots, 1)$ denotes the all-one vector in $\RR^{d+1}.$ Note that the standard basis $\{\ee_1,\cdots,\ee_{d+1}\}$ of $\RR^{d+1}$ is a canonical spanning set for $W_d$ although it is not a basis.

The goal of this section is to apply the techniques introduced in Section \ref{sec:prel} to give a new combinatorial proof for Theorem \ref{thm:submodular} by determining the deformation cone of the Braid fan. We also state and prove Theorem \ref{thm:polymatroid}, which gives the connection between polymatroids and generalized permutohedra. We start by introducing the fan concerned in this section.

\begin{definition}
For any $\pi\in \fS_{d+1}$ we define a cone in $W_d$ as follows:
\[
C(\pi):=\{\xx\in W_d \ :\ x_{\pi^{-1}(1)}<x_{\pi^{-1}(2)}<\cdots<x_{\pi^{-1}(d+1)}\}.
\]
\end{definition}

One checks that $C(\pi)$ is well-defined because if $(x_1, \dots, x_{d+1}) = (y_1, \dots, y_{d+1})$ is in $W_d$, that is, there exists $k \in \RR$ such that $y_i = x_i +k$ for each $i,$ then 
\[ x_{\pi^{-1}(1)}<x_{\pi^{-1}(2)}<\cdots<x_{\pi^{-1}(d+1)} \text{ if and only if } y_{\pi^{-1}(1)} <y_{\pi^{-1}(2)}  <\cdots<y_{\pi^{-1}(d+1)}.\]
Also, for any two distinct $\pi_1,\pi_2 \in \fS_{d+1},$ the cones $C(\pi_1)$ and $C(\pi_2)$ are disjoint.
Each region $C(\pi)$ is an open polyhedral cone. Its closure, denoted by $\sigma(\pi)$, is obtained from $C(\pi)$ by relaxing the strict inequalities.

\begin{definition}
	We call the collection of cones $\{\sigma(\pi): \pi\in \fS_{d+1}\}$, together with all of their faces, the \emph{Braid fan}, denoted by $\Br_d.$
\end{definition}
It is straightforward to show that $\Br_d$ is a complete fan in $W_d.$ However, we will prove this fact by showing $\Br_d$ is the normal fan of a family of polytopes in Proposition \ref{prop:fanofusual} below. The similar idea will be used in the next section, and we simply present it this way in preparation for later discussions. 

We next formally introduce generalized permutohedra.
Given a strictly increasing sequence $\balpha= (\alpha_1,\alpha_2,\cdots,\alpha_{d+1}) \in \RR^{d+1}$, for any $\pi\in \fS_{d+1}$, we use the following notation: 
\[ v_\pi^\balpha := \left(\alpha_{\pi(1)},\alpha_{\pi(2)},\cdots, \alpha_{\pi({d+1})}\right) = \sum_{i=1}^{d+1} \alpha_{i} \ee_{\pi^{-1}(i)}.\]
Then we define the \emph{usual permutohedron}
\begin{equation*}
\Perm(\balpha) := 
\conv\left(v_\pi^{\balpha}:\quad \pi\in \fS_{d+1}\right).
\label{equ:defnusual}
\end{equation*}
In particular, if $\balpha = (1, 2, \dots, {d+1}),$ we obtain the \emph{regular permutohedron}, denoted by $\Pi_{d},$
\[ \Pi_{d} := \Perm (1, 2, \dots, d+1).\]
Note that the above definition for $\Perm(\balpha)$ does not directly say that the vertex set of $\Perm(\balpha)$ is $\{ v_\pi^\balpha : \ \pi \in \fS_{d+1}\}$. However, this is true as we will see in Proposition \ref{prop:fanofusual} below.

Recall that \emph{generalized permutohedra} are polytopes obtained from usual permutohedra by moving vertices while preserving all edge directions. 
We see that any generalized permutohedron in $\RR^{d+1}$ lies in an affine space that is parallel to $V_d.$ However, under the setup of our article, we would like to only consider polytopes that are in $V_d.$ Thus, we give the following definition. 
\begin{definition}\label{defn:central}
	Suppose $V'$ is an affine space that is parallel to $V_d,$ and $P \in \RR^{d+1}$ is a polytope that lies in $V'$. It is clear $V' =\{ \xx \in \RR^{d+1} : \langle \1, \xx \rangle = N\}$ for some (unique) $N \in \RR.$ Then $V' = \frac{N}{d+1} \1 + V_d.$ We define $\widetilde{P} := P - \frac{N}{d+1} \1$ to be the \emph{centralized} version of the polytope $P,$ which lies in $V_d.$  
\end{definition}

\begin{example}\label{ex:crp}
	The regular permutohedron $\Pi_d$ lies in the affine space $V' = \Big\{ \xx \in \RR^{d+1} \ : \ \langle \1, \xx \rangle = \frac{(d+2)(d+1)}{2}\Big\}.$ Hence, the \emph{centralized regular permutohedron} is
	\[ \widetilde{\Pi_d} = \Pi_d - \frac{d+2}{2} \1 = \Perm\left( -\frac{d}{2}, -\frac{d-2}{2}, \dots, \frac{d-2}{2}, \frac{d}{2} \right) \subset V_d.\]
\end{example}

We have the following two results relating generalized/usual permutohedra and $\Br_d.$
\begin{proposition}[Proposition 2.6 in \cite{post}]\label{prop:fanofusual}
	If $\balpha=(\alpha_1,\dots, \alpha_{d+1})$ is strictly increasing, 
	then for each $\pi \in \fS_{d+1},$ the point $v_\pi^\balpha$ is a vertex of $\Perm(\balpha),$ and the normal cone of $\Perm(\balpha)$ at $v_{\pi}^\balpha$ is $\sigma(\pi).$ Therefore, 
	the Braid fan $\Br_d$ is the normal fan of the usual permutohedron $\Perm(\balpha).$ Hence, $\Br_d$ is a complete projective fan in $W_d$.
\end{proposition}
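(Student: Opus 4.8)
The plan is to identify, for each linear functional $\cc \in W_d$, the face of $\Perm(\balpha)$ on which $\langle \cc, \cdot\rangle$ is maximized, using the rearrangement inequality. Writing a representative $\cc = (c_1, \dots, c_{d+1})$, the $j$-th coordinate of $v_\tau^\balpha$ is $\alpha_{\tau(j)}$, so $\langle \cc, v_\tau^\balpha\rangle = \sum_{j} c_j\, \alpha_{\tau(j)}$. Since $\balpha$ is strictly increasing, the rearrangement inequality says this sum is maximized over $\tau\in\fS_{d+1}$ exactly when $\tau$ pairs larger $c_j$ with larger $\tau(j)$, and when the $c_j$ are pairwise distinct the maximizer is unique. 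Thus for $\cc \in C(\pi)$, where $c_{\pi^{-1}(1)} < \cdots < c_{\pi^{-1}(d+1)}$, the unique maximizing permutation is $\tau = \pi$. This immediately gives that $v_\pi^\balpha$ is the unique point among the $v_\tau^\balpha$ maximizing $\langle\cc,\cdot\rangle$; since the $v_\tau^\balpha$ are pairwise distinct (the $\alpha_i$ are distinct), $v_\pi^\balpha$ is therefore a vertex of $\Perm(\balpha)$, and $C(\pi) \subseteq \operatorname{int}\ncone_{\Perm(\balpha)}(v_\pi^\balpha)$.

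Next I would promote these inclusions to equalities. A short converse to the computation above: if $c_i = c_j$ with $i\neq j$, then swapping the values $\tau(i),\tau(j)$ of any maximizing $\tau$ yields another maximizer and a different point, so the maximum is attained on a face of positive dimension. Hence a functional $\cc$ has a unique maximizer among the $v_\tau^\balpha$ precisely when $\cc\in\bigcup_\pi C(\pi)$, and then the maximizer is $v_\pi^\balpha$ for the (unique) $\pi$ with $\cc\in C(\pi)$. Now $\bigcup_\pi \sigma(\pi) = W_d$, since sorting the coordinates of any representative puts it in some $\sigma(\pi)$; so the pairwise disjoint open sets $C(\pi)$ have dense union. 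They sit inside the pairwise disjoint open sets $\operatorname{int}\ncone(v_\pi^\balpha)$, and a covering argument closes the gap: any point of $\operatorname{int}\ncone(v_\pi^\balpha)$ not lying in $C(\pi)$ would have a neighbourhood meeting some $C(\pi')$ with $\pi'\neq\pi$, contradicting disjointness of the interiors. Therefore $C(\pi) = \operatorname{int}\ncone(v_\pi^\balpha)$, and since $\ncone(v_\pi^\balpha)$ is a full-dimensional closed cone in $W_d$ (the vertex has codimension $d = \dim W_d$), taking closures gives $\ncone_{\Perm(\balpha)}(v_\pi^\balpha) = \sigma(\pi)$. The same density of $\bigcup_\pi C(\pi)$ rules out any vertex of $\Perm(\balpha)$ other than the $v_\pi^\balpha$, since the interior of its normal cone would have to avoid all the $C(\pi)$.

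Finally, a fan is determined by its list of maximal cones (together with their faces), so the previous paragraph says exactly that the normal fan $\Sigma(\Perm(\balpha))$ coincides with $\Br_d$. Being the normal fan of a polytope, $\Br_d$ is projective by definition, and normal fans of polytopes are complete, so $\Br_d$ is a complete fan in $W_d$.

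I expect the main obstacle to be the bookkeeping in the middle step: matching the open chambers $C(\pi)$ with the interiors of the vertex normal cones and excluding spurious vertices. The rearrangement-inequality computation is routine, but upgrading the inclusion $C(\pi)\subseteq\operatorname{int}\ncone(v_\pi^\balpha)$ to equality — equivalently, showing no maximal normal cone is strictly larger than $\sigma(\pi)$ — needs the global input that normal fans are complete plus a density/disjointness argument, rather than a purely local calculation.
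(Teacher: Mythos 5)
Your proposal is correct and follows essentially the same route as the paper's proof: the rearrangement inequality gives the strict inequality $\langle \cc, v_\pi^\balpha\rangle > \langle \cc, v_{\pi'}^\balpha\rangle$ for $\cc \in C(\pi)$, which simultaneously shows $v_\pi^\balpha$ is a vertex and yields the inclusion of $\sigma(\pi)$ in the corresponding normal cone, and then the fact that the $\sigma(\pi)$ cover $W_d$ while the normal-cone interiors are pairwise disjoint forces equality. Your version just spells out the covering/density step that the paper compresses into one line.
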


\begin{proposition}[Proposition 3.2 in \cite{PosReiWil}] \label{prop:coarser}
  A polytope $P$ in $V_d$ is a (centralized) generalized permutohedron if and only if its normal fan $\Sigma(P)$ is refined by the braid arrangement fan $\Br_d$.
\end{proposition}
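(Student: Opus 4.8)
The plan is to read off Proposition \ref{prop:coarser} directly from two facts already in place: Proposition \ref{prop:fanofusual}, which identifies $\Br_d$ as the normal fan of every usual permutohedron $\Perm(\balpha)$ (in particular of the centralized regular permutohedron $\widetilde{\Pi_d}$, which lies in $V_d$ and is full-dimensional there), and Proposition \ref{prop:deform}, which says that a polytope $Q$ is a deformation of a fixed full-dimensional polytope $P_0$ precisely when $\Sigma(Q)$ is a coarsening of $\Sigma(P_0)$.

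First I would fix the working definition. A centralized generalized permutohedron is, unwinding Postnikov's description, exactly a deformation (in the sense of Definition \ref{defn:deform0}) of a usual permutohedron, centralized so as to lie in $V_d$; the natural reference polytope is $\widetilde{\Pi_d}$. Because, by Proposition \ref{prop:fanofusual}, all usual permutohedra share the normal fan $\Br_d$, and because — as noted right after Proposition \ref{prop:deform} — being a deformation depends only on the normal fan, it is immaterial which centralized usual permutohedron one deforms from: ``deformation of $\widetilde{\Pi_d}$'' is the same condition as ``deformation of the centralized version of $\Perm(\balpha)$'' for every strictly increasing $\balpha$.

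The two implications are then immediate substitutions. If $P \subset V_d$ is a centralized generalized permutohedron, it is a deformation of $\widetilde{\Pi_d}$, so by Proposition \ref{prop:deform} its normal fan $\Sigma(P)$ coarsens $\Sigma(\widetilde{\Pi_d})$, which by Proposition \ref{prop:fanofusual} equals $\Br_d$; hence $\Sigma(P)$ is refined by $\Br_d$. Conversely, if $\Sigma(P)$ is refined by $\Br_d = \Sigma(\widetilde{\Pi_d})$, then $\Sigma(P)$ is a coarsening of $\Sigma(\widetilde{\Pi_d})$, so Proposition \ref{prop:deform} makes $P$ a deformation of $\widetilde{\Pi_d}$; since $P$ already lies in $V_d$, it is a centralized generalized permutohedron.

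The only point requiring real care — and the step I would treat as the main obstacle — is the matching done in the second paragraph: reconciling Postnikov's original picture of generalized permutohedra (``move the vertices of a usual permutohedron, preserving all edge directions'') with the notion of deformation made precise in Definition \ref{defn:deform0} (``move the facets without passing any vertex''). If the earlier part of the paper already takes generalized permutohedra to be deformations in this sense, nothing more is needed; otherwise one invokes the classical equivalence between these viewpoints — deformations, weak Minkowski summands, and the moving-facets description — via Shepard's theorem, as flagged in the introduction. Everything past that is a direct appeal to Propositions \ref{prop:fanofusual} and \ref{prop:deform}.
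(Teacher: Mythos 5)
The paper never proves this proposition: it is imported verbatim from Postnikov--Reiner--Williams (cited as Proposition 3.2 there), and no \verb|proof| environment follows it. So there is no in-paper argument to match yours against, and I can only assess your proposal on its own terms.

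Your reduction is the natural one, and its second and third paragraphs are sound: \emph{once} one accepts that a centralized generalized permutohedron is the same thing as a deformation of $\widetilde{\Pi_d}$ in the sense of Definition \ref{defn:deform0}, the equivalence with ``$\Sigma(P)$ is refined by $\Br_d$'' follows immediately from Proposition \ref{prop:deform} combined with Proposition \ref{prop:fanofusual}. The genuine gap is exactly the step you flag in your final paragraph and then wave off. The paper's working description of generalized permutohedra is Postnikov's: polytopes obtained from a usual permutohedron by \emph{moving vertices while preserving all edge directions} (edges being allowed to degenerate). Identifying that vertex-parametrized notion with the facet-moving notion of Definition \ref{defn:deform0} is not an ``unwinding''; it is precisely the mathematical content of the cited result, whose proof occupies the appendix of \cite{PosReiWil}, where several notions of deformation are shown to agree. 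Your fallback, Shepard's theorem, does not close the gap either: it relates deformations to weak Minkowski summands, a third formulation, and says nothing about the vertex-moving picture. So what you have actually proved is ``$P$ is a deformation of $\widetilde{\Pi_d}$ per Definition \ref{defn:deform0} if and only if $\Sigma(P)$ coarsens $\Br_d$,'' which is true but is just Propositions \ref{prop:deform} and \ref{prop:fanofusual} restated; the identification of that deformation class with Postnikov's generalized permutohedra is assumed rather than established. To complete the argument you would need to show directly that a polytope whose vertices are indexed by $\fS_{d+1}$, with $v_\pi - v_{\pi'}$ a nonnegative multiple of $\ee_i - \ee_j$ whenever $\pi$ and $\pi'$ differ by the transposition of $i$ and $j$, has normal fan refined by $\Br_d$, and conversely that every polytope with normal fan refined by $\Br_d$ admits such a vertex parametrization.
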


We include a proof for Proposition \ref{prop:fanofusual}, which is relevant to discussion in Section \ref{sec:NBF}. The following elementary result is useful (See \cite[Theorem 368]{inequalities}).

\begin{lemma}[Rearrangement Inequality]\label{lem:rearr} 
Suppose $x_1 \le x_2 \le \cdots \le x_n$ and $y_1 \le y_2 \le \cdots \le y_n.$ Then for any $\pi \in \fS_n,$ we have
\[ \sum_{i=1}^n x_i y_i \ge \sum_{i=1}^n x_i y_{\pi(i)}.\]
Furthermore, if $x_1 < x_2 < \cdots < x_n$ and $y_1 < y_2 < \cdots < y_n$, then the equality only holds when $\pi$ is the identity permutation.
\end{lemma}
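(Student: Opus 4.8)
The plan is to use the standard \emph{exchange} (or \emph{swap}) argument, organized by induction on the number of inversions of $\pi$. First I would note that if $\pi$ is not the identity, then there exist indices $i < j$ with $\pi(i) > \pi(j)$. Let $\pi' = \pi \circ (i\,j)$, obtained from $\pi$ by swapping its values at $i$ and $j$; then $\pi'$ has strictly fewer inversions than $\pi$. Only the two terms indexed by $i$ and $j$ change between the two rearrangement sums, so
\[ \sum_{k=1}^n x_k y_{\pi'(k)} - \sum_{k=1}^n x_k y_{\pi(k)} = \bigl(x_i y_{\pi(j)} + x_j y_{\pi(i)}\bigr) - \bigl(x_i y_{\pi(i)} + x_j y_{\pi(j)}\bigr) = (x_j - x_i)(y_{\pi(i)} - y_{\pi(j)}). \]
Since $i < j$ gives $x_j \ge x_i$ and $\pi(i) > \pi(j)$ gives $y_{\pi(i)} \ge y_{\pi(j)}$, this difference is $\ge 0$: the sum does not decrease under such a swap.

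Second, I would iterate. Starting from an arbitrary $\pi$, each swap of the above type strictly decreases the (finite, nonnegative) inversion count, so after finitely many steps the process terminates at the identity permutation, and along the way $\sum_k x_k y_{\pi(k)}$ only weakly increased. This gives $\sum_i x_i y_{\pi(i)} \le \sum_i x_i y_i$, which is the asserted inequality.

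For the equality statement, assume $x_1 < x_2 < \cdots < x_n$ and $y_1 < y_2 < \cdots < y_n$, and suppose $\pi \ne \mathrm{id}$. Choosing an inversion $i < j$, $\pi(i) > \pi(j)$, as above, we now have $x_j - x_i > 0$ and $y_{\pi(i)} - y_{\pi(j)} > 0$ strictly, so the displayed difference is strictly positive: $\sum_k x_k y_{\pi(k)} < \sum_k x_k y_{\pi'(k)}$. Combining this with the already-established weak inequality $\sum_k x_k y_{\pi'(k)} \le \sum_k x_k y_k$ yields $\sum_k x_k y_{\pi(k)} < \sum_k x_k y_k$. Hence equality forces $\pi = \mathrm{id}$.

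I do not anticipate any serious obstacle: the argument is elementary. The only points needing mild care are the termination bookkeeping (that one may always reduce the number of inversions until reaching the identity, which is just well-ordering of $\mathbb{Z}_{\ge 0}$) and ensuring the equality case invokes the previously proved weak inequality rather than re-deriving it circularly. As an alternative one could induct on $n$ — peel off the largest element $x_n$, use a single swap to replace $x_n y_{\pi(n)}$ by $x_n y_n$ without decreasing the sum, and apply the inductive hypothesis to the remaining $n-1$ terms — but the inversion-count version above is the most self-contained.
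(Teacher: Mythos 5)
Your proof is correct and complete. For comparison: the paper does not prove this lemma at all --- it is stated as a classical fact with a citation to Hardy--Littlewood--P\'olya (\cite[Theorem 368]{inequalities}), so your self-contained exchange argument is strictly more than the paper provides. The swap computation $(x_j-x_i)(y_{\pi(i)}-y_{\pi(j)})\ge 0$ and the handling of the equality case (one strict swap followed by the already-proved weak inequality, avoiding circularity) are both right. The only sub-claim you assert without justification is that replacing $\pi$ by $\pi\circ(i\,j)$ for an inverted pair $i<j$, $\pi(i)>\pi(j)$, strictly decreases the inversion count; this is true (the pair $(i,j)$ itself loses its inversion, and for each intermediate position $k$ the pair of comparisons $(i,k)$, $(k,j)$ contributes the same or two fewer inversions after the swap), but it merits a one-line remark, or can be sidestepped entirely by always choosing an \emph{adjacent} inversion, where the count visibly drops by exactly one.
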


Recall the definition of normal cone and the notation $\ncone(F,P)$ in Definition \ref{defn:normal}

\begin{proof}[Proof of Proposition \ref{prop:fanofusual}]
Let $\ww \in C(\pi).$ For convenience, 
we let $u_i = w_{\pi^{-1}(i)}$ so that $\ww$ can be expressed as 
\begin{equation}\label{equ:wexp0}
	\ww = \sum_{i=1}^{d+1} u_{i}\ee_{\pi^{-1}(i)}.
\end{equation}
Then $\ww \in C(\pi)$ means that 
\[ u_{1} < u_{2} < \cdots < u_{d+1}.\]
Then it follows from Lemma \ref{lem:rearr} that $\langle \ww, v_\pi^\balpha \rangle > \langle \ww, v_{\pi'}^\balpha \rangle,$ for any $\pi \neq \pi' \in \fS_{d+1}.$
Hence, $v_\pi^\balpha$ does not lie in $\conv( v_{\pi'}^\balpha : \ \pi \neq \pi' \in \fS_{d+1});$ so $v_\pi^\balpha$ is a vertex of $\Perm(\balpha)$. Furthermore, we must have that $\ww \in \ncone(v_\pi^\balpha, \Perm(\balpha)).$ This implies 
\begin{equation}
\label{equ:inclusion1}
\sigma(\pi) \subseteq \ncone(v_\pi^\balpha, \Perm(\balpha)).
\end{equation}
However, the union of $\sigma(\pi)$ is the entire space $W_d,$ so the equality must holds in \eqref{equ:inclusion1}. Thus, the conclusion follows. 
\end{proof}

It follows from Propositions \ref{prop:fanofusual} and \ref{prop:coarser} that the deformation cone $\Def(\Br_d)$ of $\Br_d$ is the same as the deformation cone $\Def\left( \widetilde{\Pi_d} \right)$ of $\widetilde{\Pi_d}$, which gives a characterization for (centralized) generalized permutohedron.

The combinatorics of the Braid fan $\Br_d$ are equivalent to those of the face lattice of $\widetilde{\Pi_d}$, which are well-studied in the literature \cite[Chapter VI, Proposition 2.2]{barvinokconvex}. 
We summarize relevant results in terms of the Braid fan $\Br_d$ in the proposition below.
Recall that the \emph{Boolean algebra} $\BB_{d+1}$ is the poset on all subsets of $[d+1]$ ordered by containment. This poset has a minimum element $\hat{0}=\emptyset$ and a maximum element $\hat{1}=[d+1]$. 
We denote by $\overline{\BB_{d+1}}$ the poset obtained from $\BB_{d+1}$ by removing the maximum and minimum elements. 
For each element $S  \in \BB_{d+1}$, define
\[
	\displaystyle \ee_S := \sum_{i \in S} \ee_i.
\]

\begin{proposition}\label{prop:charbr}
The rays, i.e., $1$-dimensional cones, of the Braid fan $\Br_d$ are given by $\ee_S$ for all $S \in \overline{\BB_{d+1}}$.  
Furthermore, a $k$-set of rays $\{\ee_{S_1},\cdots, \ee_{S_k}\}$ spans a $k$-dimensional cone in $\Br_d$ if and only if the sets $S_1,\dots, S_k$ form a $k$-chain in $\overline{\BB_{d+1}}$.

In particular, the maximal cones in $\Br_d$ are in bijection with the maximal chains in $\overline{\BB_{d+1}}.$ Hence, $\Br_d$ is simplicial. 

\end{proposition}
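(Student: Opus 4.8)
The plan is to read off the cone structure of $\Br_d$ directly from the defining inequalities of the maximal cones $\sigma(\pi)$, and then identify faces. First I would recall that by definition $\sigma(\pi)$ is the closure of $C(\pi) = \{\xx \in W_d : x_{\pi^{-1}(1)} \le x_{\pi^{-1}(2)} \le \cdots \le x_{\pi^{-1}(d+1)}\}$, so $\sigma(\pi)$ is cut out by the $d$ inequalities $x_{\pi^{-1}(j)} \le x_{\pi^{-1}(j+1)}$ for $1 \le j \le d$. Working in $W_d = \RR^{d+1}/\1$, a vector $\ww$ with representative $(w_1,\dots,w_{d+1})$ pairs with $v^\balpha_\pi$ via the rearrangement inequality; but for the face structure it is cleaner to note that $\sigma(\pi)$ is a simplicial cone (it is the image in $W_d$ of the order cone of a total order on $[d+1]$, which is well-known to be simplicial of the right dimension $d$). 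Its extreme rays are obtained by setting all but one of the $d$ defining inequalities to equality: forcing $x_{\pi^{-1}(1)} = \cdots = x_{\pi^{-1}(k)}$ and $x_{\pi^{-1}(k+1)} = \cdots = x_{\pi^{-1}(d+1)}$ (with the two blocks free to differ) gives, up to the $\1$-translation and positive scaling, the ray generated by $\ee_{S}$ where $S = \{\pi^{-1}(k+1),\dots,\pi^{-1}(d+1)\}$, a proper nonempty subset of $[d+1]$. So the rays of $\sigma(\pi)$ are exactly $\ee_{S_1},\dots,\ee_{S_d}$ where $\emptyset \subsetneq S_d \subsetneq S_{d-1} \subsetneq \cdots \subsetneq S_1 \subsetneq [d+1]$ is the maximal chain of ``final segments'' of the total order encoded by $\pi$.

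Next I would assemble these local computations into the global statement. Every ray of $\Br_d$ is a ray of some $\sigma(\pi)$, hence of the form $\ee_S$ with $S \in \overline{\BB_{d+1}}$; conversely every such $\ee_S$ appears, since any $S$ sits in some maximal chain, which corresponds to some $\pi$. For the $k$-cone claim: a set of rays $\{\ee_{S_1},\dots,\ee_{S_k}\}$ spans a cone of $\Br_d$ iff all these rays lie in a common maximal cone $\sigma(\pi)$ (cones of a fan are faces of maximal cones) iff $S_1,\dots,S_k$ can be simultaneously realized as final segments of a single total order, which happens precisely when they form a chain in $\overline{\BB_{d+1}}$ — a chain of subsets can always be refined to a maximal flag, inducing the required total order, and conversely final segments of a fixed order are totally ordered by inclusion. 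When they do form a chain, the corresponding face of $\sigma(\pi)$ is genuinely $k$-dimensional since the $\ee_{S_i}$ for a chain are linearly independent in $W_d$ (their images can be completed to a basis coming from a maximal flag), so it is a $k$-dimensional cone. Specializing $k = d$ gives the bijection between maximal cones and maximal chains, and since each maximal cone has exactly $d$ spanning rays, $\Br_d$ is simplicial.

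The main obstacle I anticipate is being careful about working in the quotient $W_d = \RR^{d+1}/\1$ rather than in $\RR^{d+1}$: one must check that $\ee_S$ and $\ee_{[d+1]\setminus S}$ give the same ray in $W_d$ (since $\ee_S + \ee_{[d+1]\setminus S} = \1 \equiv 0$), which is why we may canonically pick representatives $S \in \overline{\BB_{d+1}}$ but should note the mild redundancy, and that linear independence of the $\ee_{S_i}$ must be verified modulo $\1$. A clean way to handle both points at once is to observe that for a maximal chain $\emptyset \subsetneq S_d \subsetneq \cdots \subsetneq S_1 \subsetneq [d+1]$ with $|S_j| = d+1-j$, the differences $\ee_{S_{j}} - \ee_{S_{j+1}}$ together with $\1$ form a basis of $\RR^{d+1}$ (they are, up to reindexing, the standard basis vectors), so modulo $\1$ the $d$ rays $\ee_{S_1},\dots,\ee_{S_d}$ are independent; restricting to a subchain preserves independence. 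Everything else is bookkeeping: translating the ``set inequalities to equalities'' description of faces of a simplicial order cone into the language of chains in $\overline{\BB_{d+1}}$. Alternatively, one could shortcut the entire argument by invoking the cited fact \cite[Chapter VI, Proposition 2.2]{barvinokconvex} that the face lattice of $\widetilde{\Pi_d}$ (equivalently, of $\Br_d$ under Proposition \ref{prop:fanofusual}) is the lattice of chains in $\overline{\BB_{d+1}}$, and then simply note that simpliciality is immediate because maximal chains in $\overline{\BB_{d+1}}$ have length exactly $d$.
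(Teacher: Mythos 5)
Your argument is correct and essentially complete, but it is worth noting that the paper does not actually prove this proposition: it simply cites the known description of the face lattice of the permutohedron \cite[Chapter VI, Proposition 2.2]{barvinokconvex} and restates it in fan language (this is exactly the ``shortcut'' you mention at the end). So your direct computation is the genuinely self-contained route. The key steps all check out: $\sigma(\pi)$ is a full-dimensional cone in $W_d$ cut out by the $d$ inequalities $x_{\pi^{-1}(j)}\le x_{\pi^{-1}(j+1)}$ whose normals $\ee_{\pi^{-1}(j+1)}-\ee_{\pi^{-1}(j)}$ are independent, hence simplicial; its extreme rays are obtained by imposing all but one equality and are exactly $\ee_S$ for $S$ a nonempty proper final segment of the order given by $\pi$; and faces of a simplicial cone are spanned by subsets of its extreme rays, which translates into the chain condition. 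Your linear-independence argument via the consecutive differences $\ee_{S_j}-\ee_{S_{j+1}}$ being distinct standard basis vectors cleanly handles the quotient by $\1$.

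One genuine (though minor and repairable) slip: you assert that $\ee_S$ and $\ee_{[d+1]\setminus S}$ ``give the same ray in $W_d$'' and speak of a ``mild redundancy.'' This is false. Since $\ee_S+\ee_{[d+1]\setminus S}=\1\equiv 0$, one has $\ee_S\equiv-\ee_{[d+1]\setminus S}$ in $W_d$, so these two vectors generate \emph{opposite} rays, not the same one. In fact there is no redundancy at all: for distinct $S,T\in\overline{\BB_{d+1}}$ the rays $\RR_{\ge 0}\ee_S$ and $\RR_{\ge 0}\ee_T$ are distinct (one checks that $\ee_S-\lambda\ee_T$ cannot be a multiple of $\1$ for any $\lambda>0$ unless $S=T$), which is needed for the proposition's implicit claim that the rays are in bijection with $\overline{\BB_{d+1}}$ and for the counting behind the chain correspondence. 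Your main argument never actually uses the erroneous remark, so deleting it and replacing it with the injectivity observation fixes the write-up.
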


As the one-dimensional cones are indexed by elements in $\overline{\BB_{d+1}},$ the deformation cone of $\Br_d$ can be considered to be in $\RR^{\overline{\BB_{d+1}}}$ which is indexed by nonempty, proper subsets $S$ of $[d+1].$

With these results in hand, we can now apply Proposition \ref{prop:reduxfan} to compute $\Def(\Br_d)$ .

\begin{theorem}\label{thm:centralsub}
	The deformation cone of the Braid fan (or centralized regular permutohedron) is the collection of $\bb \in \RR^{\overline{\BB_{d+1}}}$ satisfying the following \emph{submodular condition} on $\BB_{d+1}:$
\begin{equation}\label{equ:bsub}
b_{S \cup T} + b_{S \cap T} \le b_S + b_T, \quad \forall S, T \in {\BB_{d+1}},
\end{equation}
where by convention we let $b_\emptyset = b_{[d+1]} = 0.$
\end{theorem}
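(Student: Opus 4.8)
The proof applies Proposition~\ref{prop:reduxfan}. By Proposition~\ref{prop:charbr}, $\Br_d$ is simplicial, its rays are the $\ee_S$ for $S\in\overline{\BB_{d+1}}$, and maximal cones correspond to maximal chains in $\overline{\BB_{d+1}}$. Two maximal cones $\sigma,\sigma'$ are adjacent when their chains $\mathcal{C},\mathcal{C}'$ differ in exactly one element, i.e.\ there is a common chain $\emptyset\subsetneq S_1\subsetneq\cdots\subsetneq \widehat{S}\subsetneq\cdots\subsetneq S_d\subsetneq[d+1]$ (a maximal chain with one rank deleted, say at rank $r$, where $|S_r|=r$), and the two ways to fill the gap between $S_{r-1}$ and $S_{r+1}$ give two sets $S$ and $S'$ with $S_{r-1}\subsetneq S,S'\subsetneq S_{r+1}$, $|S|=|S'|=r$, $S\neq S'$. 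Since $|S_{r+1}\setminus S_{r-1}|=2$, writing $S_{r+1}\setminus S_{r-1}=\{p,q\}$ we get $S=S_{r-1}\cup\{p\}$, $S'=S_{r-1}\cup\{q\}$, so that $S\cap S'=S_{r-1}$ and $S\cup S'=S_{r+1}$.

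**Identifying the wall inequality.** For this pair $\{\sigma,\sigma'\}$, Definition~\ref{defn:fanineq} requires the unique (up to positive scaling) relation
\[
\sum_{i\neq r} c_i\,\ee_{S_i} \;=\; c_S\,\ee_S + c_{S'}\,\ee_{S'}
\]
with $c_S,c_{S'}>0$; here the shared rays are the $\ee_{S_i}$ for $i\in\{1,\dots,d\}\setminus\{r\}$. The key computation is to verify that
\[
\ee_{S\cup S'} + \ee_{S\cap S'} \;=\; \ee_{S} + \ee_{S'}
\]
holds as an identity in $W_d$ (indeed in $\RR^{d+1}$, by inclusion-exclusion on indicator vectors, since $S\cup S'=S_{r+1}$ and $S\cap S'=S_{r-1}$ are precisely the neighbors of $S,S'$ in the common chain). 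Thus the wall relation is exactly $\ee_{S_{r+1}} + \ee_{S_{r-1}} = \ee_S + \ee_{S'}$, i.e.\ $c_S=c_{S'}=1$, $c_{S_{r+1}}=c_{S_{r-1}}=1$, and all other $c_i=0$. By Definition~\ref{defn:fanineq} the associated inequality $I_{\{\sigma,\sigma'\}}(\bb)$ reads
\[
b_{S\cup S'} + b_{S\cap S'} \;\le\; b_S + b_{S'},
\]
with the convention $b_\emptyset=b_{[d+1]}=0$ (needed when $S_{r-1}=\emptyset$ or $S_{r+1}=[d+1]$). By Proposition~\ref{prop:reduxfan}, $\Def(\Br_d)$ is the set of $\bb$ satisfying all such inequalities.

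**From wall inequalities to the full submodular condition.** It remains to show that the inequalities coming from adjacent maximal cones—those with $|S|=|S'|$, $|S\cap S'|=|S|-1$, $|S\cup S'|=|S|+1$ (``covering pairs'')—generate the same solution set as the full submodular condition \eqref{equ:bsub} ranging over all $S,T\in\BB_{d+1}$. One direction is trivial: covering-pair inequalities are special cases of \eqref{equ:bsub}. For the converse I would use the standard fact that submodularity of a set function is implied by its restriction to pairs $(S,T)$ with $S=A\cup\{x\}$, $T=A\cup\{y\}$, $x\neq y$; concretely, given arbitrary $S,T$, induct on $|S\triangle T|$ (or on $|S\setminus T|+|T\setminus S|$): if $|S\setminus T|\geq 1$ and $|T\setminus S|\geq 1$ pick $x\in S\setminus T$, apply the covering inequality at $(A\cup\{x\},A\cup\{y\})$ with suitable $A$ to peel off one element, then combine with the inductive hypothesis; if one of $S\setminus T$, $T\setminus S$ is empty the inequality $b_{S\cup T}+b_{S\cap T}\le b_S+b_T$ is an equality and there is nothing to prove. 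I also need to check the convention cases: when $S\cap T=\emptyset$ or $S\cup T=[d+1]$ are forced into \eqref{equ:bsub}, the telescoping still goes through because every intermediate set appearing in the induction is either in $\overline{\BB_{d+1}}$ (so indexed by a genuine ray) or equals $\emptyset$ or $[d+1]$ (so assigned value $0$ by convention), matching exactly the convention used in Definition~\ref{defn:fanineq}'s passage to the fan version. This local-to-global step for submodular functions is the main point requiring care, though it is classical; the geometric identification of the wall relation is the conceptual heart and is, by contrast, a one-line inclusion-exclusion once the combinatorics of adjacent maximal cones in $\Br_d$ is unwound.
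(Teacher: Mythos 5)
Your proposal is correct and follows essentially the same route as the paper: identify adjacent maximal cones of $\Br_d$ with ``diamonds'' in $\BB_{d+1}$, observe the wall relation $\ee_{S\cup S'}+\ee_{S\cap S'}=\ee_S+\ee_{S'}$ (with the convention handling $\emptyset$ and $[d+1]$), invoke Proposition \ref{prop:reduxfan} to get the covering-pair inequalities, and then deduce the full submodular condition by induction on $|S\triangle T|$. Your write-up is in fact slightly more explicit than the paper's on the last local-to-global step, which the paper dispatches in one sentence.
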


\begin{figure}[h]
\begin{tikzpicture}
	\begin{scope}[scale=0.8]
    \draw [black,fill] (0,0) circle [radius = 0.08];
    \draw[black, fill] (0,-1) circle [radius=0.08];
    \draw[black, fill] (1,1) circle [radius=0.08];
    \draw[black, fill] (-1,1) circle [radius=0.08];
    \draw[black, fill] (0,2) circle [radius=0.08];
    \draw[black, fill] (0,3) circle [radius=0.08];
    \draw (0,-1) -- (0,0) -- (1,1) -- (0,2);
    \draw (0,0) -- (-1,1) -- (0,2)--(0,3);
    \node[above] at (0,3.2) {\vdots};
    \node[below] at (0,-1) {\vdots};
    \node[below right] at (0,-1) {$S_{i-2}$};
    \node[below right] at (0,0) {$S_{i-1}$};
    \node[right] at (1,1) {$S'_{i}$};
    \node[left] at (-1,1) {$S_{i}$};
    \node[right] at (0,2) {$S_{i+1}$};
    \node[right] at (0,3) {$S_{i+2}$};
\end{scope}
\end{tikzpicture}
\caption{A diamond in the Boolean algebra.}
\label{fig:poset}
\end{figure}
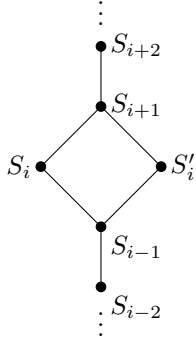

\begin{proof}
We may add $\emptyset$ and $[d+1]$ back to $\overline{\BB_{d+1}}$ and say that the maximal cones in $\Br_d$ are in bijection with maximal chains in $\BB_{d+1}.$
Then any pair of adjacent maximal cones in $\Br_d$ corresponds to a pair of maximal chains in $\BB_{d+1}$ that only differ at a non-extreme element, and all pairs of adjacent maximal cones arise this way. 
One sees any such pair of maximal chains in $\BB_{d+1}$ always form a ``diamond'' shape as shown in Figure \ref{fig:poset}. Suppose we have a pair of maximal chains shown in Figure \ref{fig:poset}. Then if we let $a = S_i \setminus S_{i-1}$ and $b = S_{i}' \setminus S_{i-1},$ we must have that $S_{i+1} = S_i \cup \{a, b\}.$ Therefore,
\begin{equation}\label{equ:eesum}
\ee_{S_{i+1}} + \ee_{S_{i-1}} = \ee_{S_{i}} + \ee_{S_i'},
\end{equation}
which is precisely the solution to \eqref{equ:edgeeq} assumed by Lemma \ref{lem:edgeeq}. (Note that if $i=1,$ then $\ee_{S_{i-1}} = \ee_\emptyset=0$, and if $i=d,$ then $\ee_{S_{i+1}} = \ee_{[d+1]} = \1 = 0$ in $W_d.$ For both cases, \eqref{equ:eesum} is the expression that we need.) It follows from Proposition \ref{prop:reduxfan} that the corresponding pair of adjacent maximal cones gives us the following inequality:
\[ b_{S_{i+1}} + b_{S_{i-1}} \le b_{S_{i}} + b_{S_i'}.\]
Going through all pairs of adjacent maximal cones, we see that $\Def(\Br_d)$ is defined by the following collection of inequalities:
\begin{equation}\label{equ:bsubdiamond}
	b_{S \cup \{a,b\}} + b_{S} \le b_{S \cup \{a\}} + b_{S \cup \{b\}}, \text{ for all $S \subseteq [d+1]$ and $a, b \in [d+1]\setminus S.$}
\end{equation}

Finally, we show that each inequality given by \eqref{equ:bsub} follows from the above set of inequalities by induction on the size difference between $S \cup T$ and $S \cap T$.
If $|S \cup T| - |S \cap T| = 0$ or $1$, one checks that the equality in \eqref{equ:bsub} holds. If $|S \cup T| - |S \cap T| = 2,$ then \eqref{equ:bsub} becomes an inequality in the form of \eqref{equ:bsubdiamond}. Now let $n > 2$ and assume that \eqref{equ:bsub} holds for any $S, T \in \BB_{d+1}$ satisfying $|S \cup T| - |S \cap T| < n.$ Suppose $S, T \in \BB_{d+1}$ satisfying $|S \cup T| - |S \cap T| = n.$ Let 
\[ R := S \cap T, \quad A:= S \setminus R, \quad B := T \setminus R.\]
Then $R, A, B$ are pairwise disjoint, and
\[ S = R \cup A, \quad T = R \cup B, \quad S \cup T = R \cup A \cup B, \quad |A|+|B| = n.\]
 Without loss of generality, we assume $|A| \ge |B|.$ Since $n \ge 2,$ we have that $|A| \ge 1.$ Pick $a \in A.$ Note that 
 $|R \cup A \cup B| - |R \cup \{a\}| = n -1 < n,$ and because $|B| \le |A|$,
 \[ |R \cup B \cup \{a\}| - |R| = |B| + 1 \le n/2 + 1 < n.\]
 Hence, by the induction hypothesis, we have
\begin{align*}
	b_{S \cup T} + b_{R \cup \{a\}} = b_{R \cup A \cup B} + b_{R \cup \{a\}} \le & b_{R \cup A} + b_{R \cup B \cup \{a\}} = b_S + b_{T \cup \{a\}}, \\
	b_{T \cup \{a\}} + b_{S \cap T} = b_{R \cup B \cup \{a\}} + b_{R} \le & b_{R \cup \{a\}} + b_{R \cup B} = b_{R \cup \{a\}} + b_{T}.
\end{align*}
Adding these two inequalities, we obtain \eqref{equ:bsub}, completing the proof.
\end{proof}

\begin{remark}\label{rem:equivsub}
	We see from the proof of Theorem \ref{thm:centralsub} that the submodular condition \eqref{equ:bsub} is equivalent to the ``diamond'' submodular condition \eqref{equ:bsubdiamond}. 
	This equivalence will be used again in Section \ref{sec:NBF}.
\end{remark}

	Note that points in $\RR^{\BB_{d+1}}$ can be considered as set functions from $2^{[d+1]}$ to $\RR.$ We now restate the Submodularity Theorem with more details and prove it. 
	
	\begin{theorem}[Submodularity Theorem, restated] \label{thm:submodrestate}
	For each submodular function $\bb \in \RR^{\BB_{d+1}}$ satisfying $\bb_\emptyset=0,$ the linear system:
	\begin{equation} \label{equ:linear}
		\left\langle \ee_{[d+1]}, \xx \right\rangle = \left\langle \1, \xx \right\rangle \ = \ b_{[d+1]}, \quad \text{and} \quad 
		\left\langle \ee_S, \xx \right\rangle \ \le \ b_S, \quad \forall \emptyset \neq S \subsetneq [d+1]
	\end{equation}
	defines a generalized permutohedron in $\RR^{d+1},$ and any generalized permutohedron arises this way uniquely.

	Furthermore, if a polytope $P \in \RR^{d+1}$ is defined by a tight representation \eqref{equ:linear}, then $P$ is a generalized permutohedron if and only if $\bb$ is a submodular function $\bb \in \RR^{\BB_{d+1}}$ satisfying $\bb_\emptyset=0.$
\end{theorem}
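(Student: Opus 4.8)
The idea is to reduce everything to the computation of $\Def(\Br_d)=\Def(\widetilde{\Pi_d})$ already carried out in Theorem \ref{thm:centralsub}, using the centralization map of Definition \ref{defn:central} to pass between the $(d+1)$-dimensional presentation \eqref{equ:linear} and the honest full-dimensional polytope $\widetilde{\Pi_d}\subset V_d$. First I would record that, taking $V=V_d$, $W=W_d$ and $P_0=\widetilde{\Pi_d}$, Propositions \ref{prop:fanofusual} and \ref{prop:charbr} show $P_0$ satisfies Setup \ref{setup1}: it is full-dimensional in $V_d$ and its facets are $F_S$ with outer normal $\ee_S$, one for each $S\in\overline{\BB_{d+1}}$. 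Thus the matrix $\A$ of Setup \ref{setup1} has rows $\ee_S$ ($S\in\overline{\BB_{d+1}}$), $\Def(\widetilde{\Pi_d})\subseteq\RR^{\overline{\BB_{d+1}}}$, and by Theorem \ref{thm:centralsub} it equals the cone of $\bb'\in\RR^{\overline{\BB_{d+1}}}$ that are submodular under the convention $b'_\emptyset=b'_{[d+1]}=0$.

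Next I would set up the bridge between $\BB_{d+1}$ and $\overline{\BB_{d+1}}$. Given $\bb\in\RR^{\BB_{d+1}}$ with $b_\emptyset=0$, put $N:=b_{[d+1]}$ and define $b'_S:=b_S-\tfrac{N|S|}{d+1}$ for $S\in\overline{\BB_{d+1}}$. I would check the following routine facts: (i) $b'_\emptyset=b'_{[d+1]}=0$, so $\bb'\in\RR^{\overline{\BB_{d+1}}}$, and $\bb\mapsto(N,\bb')$ is a bijection from $\{\bb\in\RR^{\BB_{d+1}}:b_\emptyset=0\}$ onto $\RR\times\RR^{\overline{\BB_{d+1}}}$; (ii) because $S\mapsto\tfrac{N|S|}{d+1}$ is a modular set function, $\bb$ is submodular on $\BB_{d+1}$ if and only if $\bb'$ is; (iii) writing $\xx\in\RR^{d+1}$ with $\langle\1,\xx\rangle=N$ as $\xx=\tfrac{N}{d+1}\1+\yy$, $\yy\in V_d$, one has $\langle\ee_S,\xx\rangle\le b_S\iff\langle\ee_S,\yy\rangle\le b'_S$, so the polytope defined by \eqref{equ:linear} is exactly $\tfrac{N}{d+1}\1+\widetilde P$, where $\widetilde P:=\{\yy\in V_d:\A\yy\le\bb'\}$; and (iv) since these two polytopes differ by the translation $\tfrac{N}{d+1}\1$, an inequality $\langle\ee_S,\xx\rangle\le b_S$ of \eqref{equ:linear} is tight for the former if and only if $\langle\ee_S,\yy\rangle\le b'_S$ is tight for $\widetilde P$.

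Then I would assemble the theorem. For the first assertion: if $\bb$ is submodular with $b_\emptyset=0$, then $\bb'\in\Def(\widetilde{\Pi_d})$ by (ii) and Theorem \ref{thm:centralsub}, so $\bb'$ is the deforming vector of a deformation $\widetilde P$ of $\widetilde{\Pi_d}$, and by Definition \ref{defn:deform0} $\widetilde P=\{\yy\in V_d:\A\yy\le\bb'\}$; by Propositions \ref{prop:deform} and \ref{prop:coarser}, $\widetilde P$ is a centralized generalized permutohedron, hence (Definition \ref{defn:central}) $\tfrac{N}{d+1}\1+\widetilde P$ is a generalized permutohedron, and by (iii) it is the polytope defined by \eqref{equ:linear}. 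Conversely, any generalized permutohedron $P\subset\RR^{d+1}$ lies in some affine space $\{\langle\1,\xx\rangle=N\}$ parallel to $V_d$; its centralization $\widetilde P=P-\tfrac{N}{d+1}\1\subset V_d$ is a centralized generalized permutohedron, so (Propositions \ref{prop:deform}, \ref{prop:coarser}) a deformation of $\widetilde{\Pi_d}$, with a unique deforming vector $\bb'\in\Def(\widetilde{\Pi_d})$, which is submodular by Theorem \ref{thm:centralsub}; undoing the shear gives a submodular $\bb\in\RR^{\BB_{d+1}}$ with $b_\emptyset=0$, $b_{[d+1]}=N$, and by (iii) the polytope of \eqref{equ:linear} is $P$. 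This $\bb$ is forced by $P$, since $N=\langle\1,\xx\rangle$ for $\xx\in P$ and $b_S=\max_{\xx\in P}\langle\ee_S,\xx\rangle$ (as $\bb'$ is the deforming vector of $\widetilde P$), so the correspondence is one-to-one. For the ``furthermore'': if $P$ is defined by a tight representation \eqref{equ:linear}, then by (iii)--(iv) $\widetilde P$ is cut out by the tight representation $\A\yy\le\bb'$; Lemma \ref{lem:checkdeform} applied to $P_0=\widetilde{\Pi_d}$ gives ``$\widetilde P$ is a deformation of $\widetilde{\Pi_d}$'' $\iff\bb'\in\Def(\widetilde{\Pi_d})\iff\bb'$ submodular (Theorem \ref{thm:centralsub}) $\iff\bb$ submodular with $b_\emptyset=0$ (by (i) and (ii)); and by Propositions \ref{prop:deform}, \ref{prop:coarser}, $\widetilde P$ is a deformation of $\widetilde{\Pi_d}$ if and only if $\widetilde P$, equivalently $P$, is a generalized permutohedron.

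The genuinely substantive input — identifying $\Def(\Br_d)$ with the submodular cone — is Theorem \ref{thm:centralsub}, so the remaining work is essentially bookkeeping. I expect the main thing to watch is the passage between the index sets $\BB_{d+1}$ and $\overline{\BB_{d+1}}$, i.e.\ between the lower-dimensional presentation \eqref{equ:linear} in $\RR^{d+1}$ and the full-dimensional polytope $\widetilde{\Pi_d}\subset V_d$ to which Setup \ref{setup1} and Lemma \ref{lem:checkdeform} literally apply: one must verify that the shear $b_S\mapsto b_S-\tfrac{N|S|}{d+1}$ is a submodularity- and tightness-preserving bijection, and that Lemma \ref{lem:checkdeform} is invoked with all of its hypotheses — full-dimensionality of $\widetilde{\Pi_d}$ in $V_d$, facet-defining inequalities, and the same matrix $\A$ — genuinely satisfied.
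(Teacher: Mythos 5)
Your proposal is correct and follows essentially the same route as the paper's proof: both reduce to Theorem \ref{thm:centralsub} via Propositions \ref{prop:coarser} and \ref{prop:charbr}, use the identical shear $b_S \mapsto b_S - \tfrac{b_{[d+1]}}{d+1}|S|$ to pass between $\BB_{d+1}$ and the centralized case $b_{[d+1]}=0$, and handle the ``furthermore'' clause with Lemma \ref{lem:checkdeform} and the observation that tightness is preserved under this translation. Your write-up simply makes explicit several bookkeeping steps that the paper leaves as ``straightforward to check.''
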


\begin{proof}
	It follows from Theorem \ref{thm:centralsub}, Proposition \ref{prop:coarser} and the description for rays of $\Br_d$ in Proposition \ref{prop:charbr} that the one-to-one correspondence described by the theorem holds for centralized generalized permutohedra and submodular functions $\bb \in \RR^{\BB_{d+1}}$ satisfying $\bb_\emptyset=0$ and $\bb_{[d+1]} = 0.$ 
	
	Suppose $\bb \in \RR^{\BB_{d+1}}$ is a set function. Let $k = \frac{b_{[d+1]}}{d+1}$ and define a new vector/function $\bb'$ by
	\[ \bb'_S = \bb_S - k |S|, \quad \forall S \subseteq [d+1].\]
	Let $P$ and $Q$ be the polytopes defined by the linear system \eqref{equ:linear} with vectors $\bb$ and $\bb'$ respectively. It is straightforward to check the following facts are true:
\begin{enumerate}
	\item $\bb'_\emptyset = \bb_\emptyset$ and $\bb'_{[d+1]} =0.$
	\item $\bb'$ is a submodular function if and only if $\bb$ is a submodular function.
	\item $Q =\tilde{P} = P - k \1$ is the centralized version of $P.$
\end{enumerate}
The first conclusion of the theorem follows from these facts and the arguments in the first paragraph.

Finally, the second conclusion follows from Lemma \ref{lem:checkdeform} and the observation that $\A \xx \le \bb$ is a tight representation for $P$ if and only if $\A \xx \le \bb'$ is a tight representation for $Q.$
	\end{proof}

	\begin{remark}\label{rem:edges}
		We remark that other than the Submodularity Theorem and Proposition \ref{prop:coarser}, there is another characterization of generalized permutohedra in terms of edges. A polytope $P \in \RR^{d+1}$ is a generalized permutohedron if and only if all of its edge directions are in the form of $\ee_i-\ee_j$ for $1 \le i < j \le d+1$.
		We briefly give the proof for the forward implication of the above statement, which will be used in the example we discuss below. 
		Indeed, it follows from Proposition \ref{prop:coarser} that 
		for each cone $\sigma$ of codimension $1$ in the normal fan $\Sigma(P)$ of a generalized permutohedron $P$, there exists a cone $\sigma'$ of codimension $1$ in $\Br_d$, such that the $(d-1)$-dimensional linear space spanned by $\sigma$ is the same of the linear space spanned by $\sigma',$ and hence the direction of the edge associated with $\sigma$ in $P$ has the same direction as the direction of the edge associated with $\sigma'$ in the regular permutohedron $\Pi_d.$ It is straightforward to verify that all the edge directions of $\Pi_d$ are in the form of $\ee_i-\ee_j.$  
\end{remark}

	\begin{example}\label{ex:unexample} 
		Consider the polytope $P$ in $\RR^4$ defined by the linear system \eqref{equ:linear} with $b_{[4]} = 6$ and $b_S =3$ if $|S|=1$, $b_S=4$ if $|S|=2$, and $b_S = 6$ if $|S|=3.$
%
We see that 
\[
	8=4 + 4 = b_{\{1,2\}} + b_{\{2,3\}} < b_{\{1,2,3\}} + b_{\{2\}} = 6 + 3 = 9.
\]
So $\bb$ is not a submodular function. Since the given system is a tight representation for $P$, we conclude that $P$ is a not a generalized permutohedron. Indeed, $P$ is the cube whose vertices are $(1,1,1,3),(0,2,2,2)$ and their permutations. The linear functional given by the vector $(1,2,3,4)$ attains its maximum at the vertices $(0,2,2,2)$ and $(1,1,1,3)$, but not at the other vertices. Thus, $(0,2,2,2)$ and $(1,1,1,3)$ form an edge whose direction is parallel to $(-1,1,1,-1)$, conflicting with the condition for being a generalized permutohedron expressed in Remark \ref{rem:edges}. 
\end{example}

\subsection*{Polymatroids vs Generalized Permutohedra.}\label{subsec:polyvsperm}
We finish this section by making the connection between polymatroids and generalized permutohedra.

\begin{definition}\label{defn:rank}
A \emph{polymatroid rank function} 
is a set function $r: 2^E \to \RR$ on a finite set $E$ such that 
\begin{itemize}
\item[(R1)] $0\leq r(A)$ for all $A\subseteq E$. (Nonnegativity condition)
\item[(R2)] If $A_1\subseteq A_2\subseteq E$ then $r(A_1)\leq r(A_2)$. (Monotone condition)
\item[(R3)] $r(A_1\cup A_2) + r(A_1\cap A_2)\leq r(A_1) + r(A_2)$ for all $A_1,A_2\subseteq E$. (Submodular condition)
\end{itemize}
\end{definition}
Note that we are only lifting the restriction $r(A)\leq |A|$ from the definition of matroids. 
To be consistent with notation used for generalized permutohedra, we may assume $E = [d+1]$, and $r = \bb \in \RR^{\BB_{d+1}}.$
\begin{definition}
	The \emph{base polymatroid} $P_\bb$ associated to a polymatroid rank function $\bb$ on $[d+1]$ is the polytope in $\RR^{d+1}$ defined by the linear system \eqref{equ:linear}. 
\end{definition}

It turns out that we may add the constraint $\bb_\emptyset = 0$ to the above definition and still get all the base polymatroids.
\begin{lemma}\label{lem:reduce20}
	Let $\bb \in \RR^{\BB_{d+1}}$ be a polymatroid rank function. Define an associated vector $\bb'$ as follows:
	\[ \bb'_\emptyset = 0, \quad \text{and} \quad \bb'_S = \bb_S, \forall \emptyset \neq S \subseteq [d+1].\]
	Then $\bb'$ is a polymatroid rank function on $[d+1]$ and $P_\bb = P_{\bb'}.$
\end{lemma}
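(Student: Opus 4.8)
The plan is to split the statement into two independent pieces: the equality $P_\bb = P_{\bb'}$ of polytopes, and the claim that $\bb'$ is again a polymatroid rank function. The first piece needs almost nothing: the defining linear system \eqref{equ:linear} of a base polymatroid only refers to the values $b_S$ with $S$ nonempty --- the inequalities run over $\emptyset \neq S \subsetneq [d+1]$, and the single equality is the one attached to $S = [d+1]$. Since $\bb'_S = \bb_S$ for every nonempty $S$ by construction, the two systems are literally identical, so $P_\bb = P_{\bb'}$ with nothing further to check.

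For the second piece I would verify (R1), (R2) and (R3) for $\bb'$. Conditions (R1) and (R2) are essentially immediate: $\bb'_\emptyset = 0 \ge 0$ and $\bb'_A = \bb_A \ge 0$ for nonempty $A$ give (R1); monotonicity among nonempty subsets is inherited from $\bb$, while $\bb'_\emptyset = 0 \le \bb'_A$ for every $A$ holds because every value of $\bb'$ is nonnegative, and together these give (R2). For (R3) the only case not inherited verbatim from submodularity of $\bb$ is $A_1 \cap A_2 = \emptyset$ (when $A_1 \cap A_2 \neq \emptyset$ both sides of the inequality for $\bb'$ agree with those for $\bb$). If one of $A_1, A_2$ is empty, the inequality for $\bb'$ degenerates to a trivial equality; if both are nonempty, then using $\bb_\emptyset \ge 0$ --- which is (R1) applied to $\bb$ --- together with submodularity of $\bb$ we get
\[ \bb_{A_1 \cup A_2} \;\le\; \bb_{A_1} + \bb_{A_2} - \bb_\emptyset \;\le\; \bb_{A_1} + \bb_{A_2}, \]
which is exactly $\bb'_{A_1 \cup A_2} + \bb'_{A_1 \cap A_2} \le \bb'_{A_1} + \bb'_{A_2}$.

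This is a routine bookkeeping lemma and I do not expect a genuine obstacle. The one point that needs a moment of care is recalling that $b_\emptyset$ does not appear in the system defining $P_\bb$, so that the whole content of the statement lies in the (easy) check that lowering $b_\emptyset$ to $0$ preserves the three rank axioms --- and there the disjoint-union case of submodularity is the only place where the hypothesis $\bb_\emptyset \ge 0$ is actually used.
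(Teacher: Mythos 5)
Your argument is correct and is exactly the routine verification the paper has in mind — the authors omit the proof of this lemma as ``straightforward,'' and your write-up supplies precisely the intended bookkeeping: $b_\emptyset$ never appears in the system \eqref{equ:linear}, so $P_\bb = P_{\bb'}$ is automatic, and the only axiom needing a non-verbatim check is submodularity for disjoint nonempty sets, where $\bb_\emptyset \ge 0$ is used. Nothing is missing.
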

The proof of the lemma is straightforward, so is omitted.

\begin{theorem}\label{thm:polymatroid}
The bijection asserted in Theorem \ref{thm:submodrestate} induces a bijection between base polymatroids of dimension at most $d$ and monotone submodular functions $\bb \in \RR^{\BB_{d+1}}$ satisfying $\bb_\emptyset=0.$

Moreover, every generalized permutohedron has a translation that is a base polymatroid.
\end{theorem}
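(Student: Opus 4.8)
The plan is to derive Theorem~\ref{thm:polymatroid} from Theorem~\ref{thm:submodrestate} and Lemma~\ref{lem:reduce20} by simply keeping track of the extra ``monotone'' requirement, and then to obtain the translation statement by a direct bookkeeping of how translating a polytope acts on its associated submodular function.

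\textbf{First part.} I would start with the elementary observation that a set function $\bb \in \RR^{\BB_{d+1}}$ which is monotone (condition (R2)) and satisfies $\bb_\emptyset = 0$ is automatically nonnegative (condition (R1)), since then $\bb_A \ge \bb_\emptyset = 0$ for every $A \subseteq [d+1]$. Hence the monotone submodular functions $\bb$ with $\bb_\emptyset = 0$ are \emph{exactly} the polymatroid rank functions with $\bb_\emptyset = 0$. Now I would restrict the bijection of Theorem~\ref{thm:submodrestate} to this subset: it sends such a $\bb$ to the polytope cut out by \eqref{equ:linear}, which is precisely the base polymatroid $P_\bb$; and conversely, by Lemma~\ref{lem:reduce20} every base polymatroid equals $P_{\bb'}$ for a polymatroid rank function $\bb'$ with $\bb'_\emptyset = 0$, i.e.\ for a monotone submodular function with $\bb'_\emptyset = 0$, so it lies in the image. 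Thus the restriction is a bijection onto the base polymatroids, which all have dimension at most $d$ because they lie in the hyperplane $\langle \1, \xx\rangle = b_{[d+1]}$.

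\textbf{Second part.} Given a generalized permutohedron $P$ with associated submodular function $\bb$ (so $\bb_\emptyset = 0$), I would look for a translation vector $\vv \in \RR^{d+1}$ making the associated submodular function monotone. Translating $P$ by $\vv$ replaces $\bb$ by $\bb^\vv$ with $\bb^\vv_S = \bb_S + \langle \ee_S, \vv\rangle = \bb_S + \sum_{i\in S} v_i$; since $S \mapsto \sum_{i\in S} v_i$ is modular, $\bb^\vv$ is again submodular with $\bb^\vv_\emptyset = 0$, and $P+\vv$ is cut out by \eqref{equ:linear} with $\bb^\vv$. Monotonicity of $\bb^\vv$ means $\bb^\vv_A \le \bb^\vv_{A\cup\{j\}}$ for all $A$ and $j \notin A$, which after cancelling the common modular terms reads $v_j \ge \bb_A - \bb_{A\cup\{j\}}$.

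At this point I would invoke the standard ``diminishing marginals'' consequence of submodularity: applying \eqref{equ:bsub} to $S = A\cup\{j\}$ and $T = B$ with $A\subseteq B$ and $j\notin B$ gives $\bb_{B\cup\{j\}} - \bb_B \le \bb_{A\cup\{j\}} - \bb_A$, so the marginal $\bb_{A\cup\{j\}} - \bb_A$ is smallest at $A = [d+1]\setminus\{j\}$. Hence setting $v_j := \bb_{[d+1]\setminus\{j\}} - \bb_{[d+1]}$ (or any larger value) makes $\bb^\vv$ monotone, so $\bb^\vv$ is a polymatroid rank function with $\bb^\vv_\emptyset = 0$ and, by the first part, $P + \vv = P_{\bb^\vv}$ is a base polymatroid. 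I do not expect a genuine obstacle here; the only points requiring care are checking that translation corresponds to adding a modular function and the diminishing-marginals inequality, both routine given the machinery already established.
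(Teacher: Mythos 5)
Your proposal is correct and follows essentially the same route as the paper: the first part is exactly the paper's argument (monotonicity plus $\bb_\emptyset=0$ gives nonnegativity, then combine Lemma \ref{lem:reduce20} with Theorem \ref{thm:submodrestate}), and for the second part the paper likewise translates $P$ so that the associated submodular function is shifted by a modular function until it becomes monotone, using the uniform shift $\bb_S \mapsto \bb_S + k|S|$ (i.e.\ translation by $k\1$) for $k$ sufficiently large, where you instead produce an explicit coordinatewise translation via the diminishing-marginals bound. Both are valid; yours just makes the required translation explicit rather than invoking ``sufficiently large $k$.''
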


\begin{proof}
	The first assertion follows easily from Lemma \ref{lem:reduce20}, Theorem \ref{thm:submodrestate}, and the observation that the nonnegativity condition (R1) follows from the monotone condition when we assume $\bb_\emptyset=0.$

	We use similar ideas presented in the proof of Theorem \ref{thm:submodrestate} to prove the second statement. Suppose $P$ is a generalized permutohedron associated to the submodular function $\bb \in \RR^{\BB_{d+1}}$ (where $\bb_\emptyset = 0$). For any $k \in \RR$, we define a new vector/function $\bb^{(k)} \in \RR^{\BB_{d+1}}$ by
	\[ \bb^{(k)}_S = \bb_S + k |S|, \quad \forall S \subseteq [d+1].\]
	Then $\bb^{(k)}$ is a submodular function and the generalized permutohedron associated to $\bb^{(k)}$ is a translation of $P.$ However, it is easy to see that for sufficiently large $k,$ the set function $\bb^{(k)}$ is monotone. Hence, the conclusion follows.
\end{proof}

\section{Nested Braid fan and nested permutohedra} \label{sec:NBF}


The plan of this section is as follows: We will first introduce the nested Braid fan $\Br_d^2$ as a refinement of the Braid fan, and construct a family of polytopes, called usual nested permutohedra in $V_d$ by giving an explicit description for their vertices. We then establish (in Proposition \ref{prop:fanofusual2}) the connection between these two new objects by showing $\Br_d^2$ is the normal fan of any usual nested permutohedron. After discussing combinatorial structure of $\Br_d^2$ (in Proposition \ref{prop:charbr2}), we give an inequality description for usual nested permutohedra (in Theorem \ref{thm:facetdes}). Lastly, we determine deformation cones of $\Br_d^2$ and nested permutohedra and give a result that is analogous to the Submodularity Theorem (see Theorems \ref{thm:centralsub2} and \ref{thm:sub2}). 

Recall that $\{\ee_1,\cdots,\ee_{d+1}\}$ is the standard basis for $\mathbb{R}^{d+1}$. For any permutation $\pi \in \fS_{d+1},$ we define
\[ \ff^\pi_i := \ee_{\pi^{-1}(i+1)} - \ee_{\pi^{-1}(i)}, \quad \forall 1 \le i \le d,\]
and for any point $\xx =(x_1,\dots,x_{d+1})$ in $\RR^{d+1}$ or $W_d$, we define
\[ (\Delta \xx)^\pi_i := x_{\pi^{-1}(i+1)} - x_{\pi^{-1}(i)}, \quad \forall 1 \le i \le d.\]

\begin{definition}\label{defn:NBF}
For each $(\pi,\tau)\in \fS_{d+1}\times \fS_{d}$, let $C(\pi,\tau)$ be the collection of vectors $\xx \in W_d$ satisfying:
\begin{enumerate}
\item $x_{\pi^{-1}(1)}<x_{\pi^{-1}(2)}<\cdots<x_{\pi^{-1}(d+1)}$, and
\item $(\Delta \xx)^\pi_{\tau^{-1}(1)}<(\Delta\xx)^\pi_{\tau^{-1}(2)}<\cdots < (\Delta\xx)^\pi_{\tau^{-1}(d)}$. (Note that this condition is an order of the first differences of the sequence $x_{\pi^{-1}(1)}, x_{\pi^{-1}(2)}, \dots, x_{\pi^{-1}(d+1)}$ with respect to the permutation $\tau.$) 
\end{enumerate}
\end{definition}
Similar to $C(\pi)$ defined in the last section, one can check that $C(\pi,\tau)$ is well-defined, and each region $C(\pi,\tau)$ is an open polyehdral cone. Let $\sigma(\pi,\tau)$ be the closed polyhedral cone obtained from $C(\pi,\tau)$ by relaxing the strict inequalities. 

\begin{definition}\label{defn:NBF2}
	We call the collection of cones $\{\sigma(\pi,\tau): (\pi, \tau) \in \fS_{d+1}\times \fS_{d}\}$, together with all of their faces, the \emph{nested Braid fan}, denoted by $\Br_d^2.$
\end{definition}

\begin{example}
	Let $(\pi,\tau) = (3241, 231).$ Then $(\pi^{-1},\tau^{-1})=(4213,312).$ Thus, the $C(\pi,\tau)$ is the collection of $\xx \in W_d$ satisfying
\begin{enumerate}
	\item $x_4 < x_2 < x_1 < x_3$, and
	\item $x_3-x_1<x_2-x_4<x_1-x_2.$
\end{enumerate}
\end{example}

We will use similar ideas to those presented in the last section to prove that $\Br_d^2$ is a complete projective fan by showing it is the normal fan of a family of polytopes, which will be constructed below. We start by choosing two strictly increasing sequences
\[ \balpha = (\alpha_1, \alpha_2, \dots, \alpha_{d+1}) \in \RR^{d+1} \quad \text{and} \quad \bbeta = (\beta_1, \beta_2, \dots, \beta_{d}) \in \RR^{d}.\]
We then pick $M, N > 0.$ The basic idea of the construction is to take the $M$-th dilation of the usual permutohedron $\Perm(\balpha)$ and then replace each of its vertices with an $N$-th dilation of $\Perm(\bbeta)$ under a suitable coordinate system. This will give us $d! (d+1)!$ vertices. Below is the precise construction. For any $(\pi, \tau) \in \fS_{d+1} \times \fS_d$, we define
\begin{equation}\label{equ:defnv}
v_{\pi,\tau}^{(\balpha,\bbeta),(M,N)} := M \sum_{i=1}^{d+1} \alpha_i \ee_{\pi^{-1}(i)} + N \sum_{i=1}^d \beta_i \ff_{\tau^{-1}(i)}^\pi.
\end{equation}
(Note that $\sum_{i=1}^{d+1} \alpha_i \ee_{\pi^{-1}(i)} = v_\pi^\balpha$ is a vertex of $\Perm(\balpha).$) 
We omit $(\balpha,\bbeta)$ from the superscript, and only write $v_{\pi,\tau}^{(M,N)}$ if $(\balpha,\bbeta) = \left( (1,2,\dots,d+1), (1,2,\dots, d) \right).$ 

After rearranging coordinates, we get the following expression:
\begin{equation}
v_{\pi,\tau}^{(\balpha,\bbeta),(M,N)} = \sum_{i=1}^{d+1} (M \alpha_i + N(\beta_{\tau(i-1)} - \beta_{\tau(i)})) \ \ee_{\pi^{-1}(i)},
	\label{equ:expansion} \end{equation}
where by convention we let $\beta_{\tau(0)} = \beta_{\tau(d+1)} =0.$ We would like to have the coefficients of $\ee_{\pi^{-1}(i)}$ in the above expansion increase strictly as $i$ increases, for any $(\pi,\tau)$. 
	If this happens, we say $(M,N) \in \RR_{>0}^2$ is an \emph{appropriate choice} for $(\balpha, \bbeta)$. 
It is not hard to see that for fixed $(\balpha, \bbeta)$, any pair $(M,N)$ satisfying $M >> N$ is an appropriate choice. 

\begin{definition}\label{defn:usual2}
	Suppose $(\balpha, \bbeta) \in \RR^{d+1} \times \RR^d$ is a pair of strictly increasing sequences $(\balpha, \bbeta) \in \RR^{d+1} \times \RR^d$ and $(M,N) \in \RR_{>0}^2$ is an \emph{appropriate choice} for $(\balpha, \bbeta)$. We define the \emph{usual nested permutohedron}
	\begin{equation}
\Perm(\balpha, \bbeta; M,N) := \conv\left(v_{\pi,\tau}^{(\balpha,\bbeta),(M,N)}:\quad (\pi,\tau) \in \fS_{d+1}\times \fS_d\right).
\label{equ:defnusual2}
	\end{equation}
	In particular, if $\balpha = (1, 2,\dots, d+1)$ and $\bbeta=(1,2,\dots, d),$ we call the polytope a \emph{regular nested permutohedron}, denoted by $\Pi_d^2(M,N).$ (So $v_{\pi,\tau}^{(M,N)}$ are vertices for $\Pi_d^2(M,N).$)

\end{definition}
We remark that similar to the definition of $\Perm(\balpha)$, the above definition does not directly say that each $v_{\pi,\tau}^{(\balpha,\bbeta),(M,N)}$ is a vertex of $\Perm(\balpha, \bbeta; M,N).$ However, it will be shown to be true in Proposition \ref{prop:fanofusual2} below.

One sees that $\Perm(\balpha,\bbeta;M,N)$ lies in the hyperplane $\sum_{i=1}^{d+1} x_i = M\sum_{i=1}^{d+1} \alpha_i$, which is a translation of $V_d,$ and $\Perm(\balpha, \bbeta; M, N)$ is centralized if and only if $\sum_{i=1}^{d+1} \alpha_i=0,$ which is the situation we will focus on.

\begin{example}\label{ex:nestedperm}
One can show that $(M,N)=(4,1)$ is an appropriate choice for $(\balpha=(1,2,3,4), \bbeta=(1,2,3))$. Thus, $\Pi_3^2(4,1)$ is a nested regular permutohedron. 
See Figure \ref{fig:2polytopes} for a picture of it together with a picture of the regular permutohedron $\Pi_3$ as a comparison. 

Let $(\pi,\tau)=(3241,231)$. Then $(\pi^{-1}, \tau^{-1})=(4213, 312)$. Thus, the vertex of $\Pi_3^2(4,1)$ associated to $(3241,231)$ is 
\[
v_{3241,231}^{(4,1)}=
4(1\ee_4+2\ee_2+3\ee_1+4\ee_3) + 1(1(\ee_3-\ee_1)+2(\ee_2-\ee_4)+3(\ee_1-\ee_2)) = (14,7,17,2).
\]
We can compute all vertices of $\Pi_3^2(4,1)$ this way, and they are
\[
(3,7,11,19),(2,9,10,19),(1,10,11,18),(1,9,13,17),(2,7,14,17),(3,6,13,18),
\]
and all of their permutations.
\end{example}


\begin{proposition}\label{prop:fanofusual2}
	Suppose $(\balpha, \bbeta) \in \RR^{d+1} \times \RR^d$ is a pair of strictly increasing sequences and $(M,N) \in \RR_{>0}^2$ is an appropriate choice for $(\balpha, \bbeta)$. 

	Then for each $(\pi,\tau) \in \fS_{d+1} \times \fS_d,$ the point $v_{\pi,\tau}^{(\balpha,\bbeta),(M,N)}$ is a vertex of $\Perm(\balpha,\bbeta),$ and the normal cone of $\Perm(\balpha,\bbeta;M,N)$ at $v_{\pi,\tau}^{(\balpha,\bbeta);(M,N)}$ is $\sigma(\pi,\tau).$ Therefore, the nested Braid fan $\Br_d^2$ is the normal fan of the nested usual permutohedron $\Perm(\balpha, \bbeta; M, N).$ Hence, $\Br_d^2$ is a complete projective fan in $W_d$.
\end{proposition}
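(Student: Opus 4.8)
The plan is to mimic the proof of Proposition \ref{prop:fanofusual}, replacing the Rearrangement Inequality argument with a two-stage (lexicographic) version that accounts for the refinement by first differences. Fix $(\pi,\tau)\in\fS_{d+1}\times\fS_d$ and let $\ww\in C(\pi,\tau)$. First I would show $\langle\ww,v_{\pi,\tau}^{(\balpha,\bbeta),(M,N)}\rangle>\langle\ww,v_{\pi',\tau'}^{(\balpha,\bbeta),(M,N)}\rangle$ for every $(\pi',\tau')\neq(\pi,\tau)$. Write $\ww=\sum_i u_i\ee_{\pi^{-1}(i)}$ with $u_1<\cdots<u_{d+1}$, and also record the first differences $\delta_i := u_{\tau^{-1}(i+1)\text{-thing}}$—more precisely, using the notation $(\Delta\ww)^\pi$, condition (2) says $(\Delta\ww)^\pi_{\tau^{-1}(1)}<\cdots<(\Delta\ww)^\pi_{\tau^{-1}(d)}$. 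Using the expansion \eqref{equ:defnv}, $\langle\ww,v_{\pi',\tau'}\rangle = M\langle\ww,v_{\pi'}^\balpha\rangle + N\langle\ww,\sum_i\beta_i\ff^{\pi'}_{(\tau')^{-1}(i)}\rangle$.

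The key step is the following two-level optimization. Because $(M,N)$ is an appropriate choice, the coefficients in \eqref{equ:expansion} are strictly increasing, so by the Rearrangement Inequality (Lemma \ref{lem:rearr}) applied to $\ww$ and the vector with those coefficients, the linear functional $\langle\ww,\cdot\rangle$ is maximized over all $d!(d+1)!$ points uniquely at the point whose coefficient sequence is sorted to match $u_1<\cdots<u_{d+1}$; one checks directly from \eqref{equ:expansion} that this forces $\pi'=\pi$ first (the $M$-term dominates the ordering of the ``blocks'' indexed by the values of $u$), and then, among points with $\pi'=\pi$, forces $\tau'=\tau$ (the $N$-term, i.e.\ $\sum_i \beta_i (\Delta\ww)^\pi_{(\tau')^{-1}(i)}$, is maximized exactly when $(\tau')^{-1}$ sorts the first differences $(\Delta\ww)^\pi_j$ in increasing order against the increasing $\beta$'s, which is condition (2) defining $C(\pi,\tau)$). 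Strictness of both inequalities in the definition of $C(\pi,\tau)$ gives uniqueness of the maximizer. Hence $v_{\pi,\tau}^{(\balpha,\bbeta),(M,N)}$ is not in the convex hull of the other points, so it is a vertex of $\Perm(\balpha,\bbeta;M,N)$, and moreover $\ww\in\ncone\big(v_{\pi,\tau}^{(\balpha,\bbeta),(M,N)},\Perm(\balpha,\bbeta;M,N)\big)$. This yields the inclusion
\[
\sigma(\pi,\tau)\subseteq \ncone\big(v_{\pi,\tau}^{(\balpha,\bbeta),(M,N)},\Perm(\balpha,\bbeta;M,N)\big).
\]

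To finish, I would argue exactly as in the proof of Proposition \ref{prop:fanofusual}: the cones $C(\pi,\tau)$, ranging over all $(\pi,\tau)$, are pairwise disjoint and their closures $\sigma(\pi,\tau)$ cover $W_d$ (each generic $\ww$ determines a unique order of its coordinates and a unique order of the resulting first differences, hence lies in exactly one $C(\pi,\tau)$). Since the normal cones of a full-dimensional polytope at its vertices also cover $W_d$ and have disjoint interiors, the inclusions above must all be equalities. Therefore the normal fan of $\Perm(\balpha,\bbeta;M,N)$ is exactly $\{\sigma(\pi,\tau)\}$ together with their faces, i.e.\ $\Br_d^2$; in particular $\Br_d^2$ is a complete projective fan in $W_d$.

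The main obstacle I anticipate is making the ``lexicographic'' claim fully rigorous, namely that maximizing $\langle\ww,\cdot\rangle$ forces first $\pi'=\pi$ and then $\tau'=\tau$. This is really the statement that the map from $(\pi',\tau')$ to the coefficient sequence in \eqref{equ:expansion} is injective and that, for an appropriate choice $(M,N)$, the Rearrangement Inequality applied to that sequence detects the pair $(\pi',\tau')$; the cleanest route is probably not to split into two cases by hand but to apply Lemma \ref{lem:rearr} once to the length-$(d+1)$ vector of coefficients — its strict version gives a unique maximizing rearrangement, and then a short combinatorial lemma shows that the coefficient vector of $v_{\pi,\tau}^{(\balpha,\bbeta),(M,N)}$ is the unique one among all $d!(d+1)!$ whose entries appear in the order dictated by $u_1<\cdots<u_{d+1}$, using both conditions (1) and (2) in Definition \ref{defn:NBF}. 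Everything else is a direct transcription of the $\Br_d$ argument.
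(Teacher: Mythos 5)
Your two-step argument is essentially the paper's proof: the paper likewise introduces the intermediate point $v_{\pi,\tau'}^{(\balpha,\bbeta),(M,N)}$ and applies Lemma \ref{lem:rearr} twice — once to the strictly increasing coefficient sequence of \eqref{equ:expansion} (using appropriateness and condition (1)) to handle $\pi'\neq\pi$ with $\tau'$ fixed, and once to $\bbeta$ against the first differences $(\Delta\ww)^\pi$ (using condition (2)) to handle $\tau'\neq\tau$ with $\pi$ fixed — before closing with the same covering argument as in Proposition \ref{prop:fanofusual}. One caution about the ``cleanest route'' you sketch at the end: the coefficient vector in \eqref{equ:expansion} depends on $\tau'$, so a single application of the Rearrangement Inequality, which only compares rearrangements of one fixed sequence, cannot by itself rank all $d!(d+1)!$ candidate points; the two-stage comparison you (and the paper) carry out is genuinely needed.
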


\begin{proof}
	Similar to the proof of Proposition \ref{prop:fanofusual}, it is enough to show that for any $\ww \in C(\pi, \tau)$ (assuming $(\pi,\tau)$ is fixed), 
\begin{equation}
\label{equ:strictineq}
	\left\langle \ww, v_{\pi,\tau}^{(\balpha,\bbeta),(M,N)} \right\rangle > \left\langle \ww, v_{(\pi',\tau')}^{(\balpha,\bbeta),(M,N)} \right\rangle, \quad \forall (\pi,\tau) \neq (\pi',\tau') \in \fS_{d+1} \times \fS_d.
\end{equation}	
We will prove the above inequality by introducing an intermediate product and showing
\begin{equation}\label{equ:strictineq1}
	\left\langle \ww, v_{\pi,\tau}^{(\balpha,\bbeta),(M,N)} \right\rangle > \left\langle \ww, v_{\pi,\tau'}^{(\balpha,\bbeta),(M,N)} \right\rangle  > \left\langle \ww, v_{(\pi',\tau')}^{(\balpha,\bbeta),(M,N)} \right\rangle. 
\end{equation}

Similar as before, 
we let $u_i = w_{\pi^{-1}(i)}$ for each $i$ and express $\ww$ as in \eqref{equ:wexp0}.
Then $\ww \in C(\pi, \tau)$ means that
\begin{enumerate}
\item $u_1 < u_2 < \dots < u_{d+1}$, and
\item $u_{\tau^{-1}(1)+1} - u_{\tau^{-1}(1)} < u_{\tau^{-1}(2)+1}-u_{\tau^{-1}(2)} < \cdots < u_{\tau^{-1}(d)+1} - u_{\tau^{-1}(d)}.$
\end{enumerate}
Expression \eqref{equ:wexp0}, together with \eqref{equ:expansion}, allows us to compute products in \eqref{equ:strictineq} easily. 
Then the second inequality in \eqref{equ:strictineq1} follows from the Rearrangement Inequality (Lemma \ref{lem:rearr}), condition (1) above and the fact that $(M,N)$ is an appropriate choice. 
Next, we see
the first inequality in \eqref{equ:strictineq1} holds if and only if
\[\sum_{i=1}^{d+1} u_i \left(\beta_{\tau(i-1)} - \beta_{\tau(i)}\right) > \sum_{i=1}^{d+1} u_i \left(\beta_{\tau'(i-1)} - \beta_{\tau'(i)}\right).\]
After rearranging summations, the above inequality becomes
\[ \sum_{j=1}^d \beta_j \left(u_{\tau^{-1}(j)+1} - u_{\tau^{-1}(j)}\right) >\sum_{j=1}^d \beta_j \left(u_{(\tau')^{-1}(j)+1} - u_{(\tau')^{-1}(j)}\right),\]
which follows from the Rearrangement Inequality, condition (2) above and the fact that $\bbeta$ is strictly increasing.
\end{proof}

Proposition \ref{prop:fanofusual2} provides one natural way to define generalized nested permutohedra.

\begin{definition}\label{defn:gen2} A polytope in $V_d$ (or in an affine plane that is a translation of $V_d$) is a \emph{generalized nested permutohedron} if its normal fan is a coarsening of $\Br_d^2.$

\end{definition}

\subsection*{Combinatorics of $\Br_d^2$}
Our next goal is to determine the combinatorics of the fan $\Br^2_d$ which is equivalent to those of the face lattice of $\Pi_d^2(M,N)$. 
The following poset arises naturally in our discussion.

\begin{definition}\label{defn:osp}
An \emph{ordered (set) partition} of $[d+1]$ is an ordered tuple of disjoint subsets whose union is $[d+1]$, i.e. $\cT=(S_1,\cdots,S_k)$ with $S_i\subset [d+1]$ for all $1\leq i\leq k$ and $S_1\sqcup\cdots \sqcup S_k = [d+1]$. 

The \emph{ordered (set) partition poset}, denoted by $\OO_{d+1}$, is the poset on all ordered set partitions of $[d+1]$ ordered by refinement.
This is a ranked poset of rank $d.$ It has a maximum, the trivial partition, $\hat{1}=([d+1])$, but doesn't have a minimum. 

It has $(d+1)!$ minimal elements, one for each permutation $\pi \in \fS_{d+1}$ considered as an ordered set partition of singletons:
\[ \cT(\pi) := ( \{\pi^{-1}(1)\}, \{\pi^{-1}(2)\}, \dots, \{\pi^{-1}(d+1)\}).\]

We denote by $\overline{\OO_{d+1}}$ the poset obtained from $\OO_{d+1}$ by removing the maximum element.
\end{definition}
\begin{remark}
We are going to write ordered set partitions by using numbers separated by bars. For instance, the ordered partition $\cT=(\{3,4\},\{1,5\},\{2,6,7\})$ will be written as $34|15|267$. It is important to keep in mind that the numbers between bars form a set, hence their order is irrelevant.  We have $\cT(3721456) = 4|3|1|5|6|7|2 \le 34|15|267$.
\end{remark}

Recall we define $\ee_S$ for each $S \in \BB_{d+1}.$ For each element $\cT = (S_1,\cdots,S_k) \in \OO_{d+1}$, we define
\begin{equation}
	\ee_{\cT} := \sum_{i} i \ee_{S_i}.
	\label{equ:defneT}
\end{equation}
For instance if $\cT=34|15|267$, then 
$\ee_{\cT} =  1 \cdot \ee_{34} + 2 \cdot \ee_{15} + 3 \cdot \ee_{267} = (2,3,1,1,2,3,3).$

We have the following result that is analogous to Proposition \ref{prop:charbr}.
\begin{proposition}\label{prop:charbr2}
The rays, i.e., $1$-dimensional cones, of the Braid fan $\Br_d^2$ are given by $\ee_\cT$ for all $\cT \in \overline{\OO_{d+1}}$.  
Furthermore, a $k$-set of rays $\{\ee_{\cT_1},\cdots, \ee_{\cT_k}\}$ spans a $k$-dimensional cone in $\Br_d^2$ if and only if the sets $\cT_1,\dots, \cT_k$ form a $k$-chain in $\overline{\OO_{d+1}}$.

In particular, the maximal cones in $\Br_d^2$ are in bijection with the maximal chains in $\overline{\OO_{d+1}}.$ Hence, $\Br_d^2$ is simplicial. 
\end{proposition}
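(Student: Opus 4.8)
The plan is to mirror the proof of Proposition~\ref{prop:charbr} but with the ordered partition poset $\overline{\OO_{d+1}}$ playing the role of $\overline{\BB_{d+1}}$, extracting the combinatorics directly from the face structure of the usual nested permutohedron $\Pi_d^2(M,N)$ (equivalently, from $\Br_d^2$ via Proposition~\ref{prop:fanofusual2}). First I would set up the dictionary between faces of $\Br_d^2$ and faces of $\Pi_d^2(M,N)$: by Proposition~\ref{prop:fanofusual2}, $k$-dimensional cones of $\Br_d^2$ correspond to codimension-$k$ faces of $\Pi_d^2(M,N)$, and in particular rays correspond to facets while maximal cones correspond to vertices $v_{\pi,\tau}^{(M,N)}$. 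So I would describe which linear functional $\ww$ is constant on which subset of vertices. Given $\ww \in W_d$, the face it selects is determined by (i) the order type of the coordinates $w_i$ — this is exactly the data of an ordered set partition $\cT = (S_1,\dots,S_k)$ where $S_j$ collects the indices on which $\ww$ takes its $j$-th smallest value — and (ii) among ties, a secondary order on the ``gaps'' coming from $\tau$. The key computation is that $\langle \ww, v_{\pi,\tau}^{(M,N)}\rangle$ depends, to leading order in $M$, only on the $\pi$-order type of $\ww$, i.e.\ on the ordered partition $\cT$; this is precisely the content extracted from the proof of Proposition~\ref{prop:fanofusual2}.

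Next I would identify the rays. A ray of $\Br_d^2$ is a $1$-dimensional cone, dual to a facet of $\Pi_d^2(M,N)$. I would argue that for a generic $\ww$ on such a ray, all strict inequalities in Definition~\ref{defn:NBF} are equalities except possibly one ``level'' of the coordinate order; concretely, the functionals supporting facets are, up to positive scaling and adding multiples of $\1$, exactly the $\ee_\cT = \sum_i i\,\ee_{S_i}$ for $\cT = (S_1,\dots,S_k) \in \overline{\OO_{d+1}}$. To see these are genuinely rays and not higher-dimensional, I would check that $\ee_\cT$ (modulo $\1$) is nonzero for every non-trivial ordered partition and that two distinct $\cT$'s give non-proportional vectors in $W_d$ (a short linear-algebra check: if $\sum_i i \ee_{S_i} = \lambda \sum_j j \ee_{T_j} + \mu\1$, comparing coordinates forces the partitions and the orders to coincide). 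Then, to see every facet arises this way and every $\cT$ does give a facet, I would count: faces of $\Pi_d^2(M,N)$ of each dimension, via Proposition~\ref{prop:fanofusual2}, are indexed by the cones of $\Br_d^2$, and the number of $k$-element chains in $\overline{\OO_{d+1}}$ matches — but more cleanly, I would just directly verify that for each $\cT$ the cone $\{\ww : w_i \le w_{i'} \text{ whenever } i \in S_a, i' \in S_b, a \le b\}$, intersected appropriately, is supported and has the right dimension.

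Then I would handle the incidence/spanning statement: a set $\{\ee_{\cT_1},\dots,\ee_{\cT_k}\}$ spans a cone of $\Br_d^2$ iff $\cT_1,\dots,\cT_k$ form a chain in $\overline{\OO_{d+1}}$. For the forward direction, if these rays span a cone $\sigma$, take $\ww$ in the relative interior of $\sigma$; then $\ww$ lies in each ray's supporting hyperplane arrangement in a compatible way, and the order-type/gap-order data forces the $\cT_i$ to be totally ordered by refinement — because a relative-interior point of $\sigma(\pi,\tau)$'s face determines a unique flag of ordered partitions, namely the successive coarsenings obtained by merging adjacent blocks, which is exactly a maximal chain; a sub-face corresponds to a sub-chain. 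Conversely, given a chain $\cT_1 < \cdots < \cT_k$, I would exhibit an explicit $\ww$ (a suitable nonnegative combination $\sum c_i \ee_{\cT_i}$ with $c_i > 0$) whose selected face is the cone spanned by exactly these rays, using that refinement of ordered partitions corresponds to the containment of the corresponding faces. The maximal-cone statement and simpliciality then follow immediately: maximal chains in $\overline{\OO_{d+1}}$ have length $d$ (the poset is ranked of rank $d$, with the top element removed), so maximal cones are spanned by exactly $d$ rays, i.e.\ $\Br_d^2$ is simplicial, matching $\dim W_d = d$.

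The main obstacle I anticipate is the bookkeeping in the spanning/incidence step — carefully translating between a relative-interior vector $\ww$ of a face of $\sigma(\pi,\tau)$ and the resulting flag of ordered partitions, and checking that the $\tau$-component (the secondary gap-order) interacts correctly so that the faces genuinely biject with chains rather than with some larger combinatorial object. The subtlety is that $\Br_d^2$ refines $\Br_d$, so a face of $\Br_d^2$ records both a ``position'' order and a ``gap'' order, and one must confirm these two pieces of data assemble into a single chain in $\OO_{d+1}$ without overcounting. I would resolve this by working out the correspondence on the level of which coordinates of $\ww$ are forced equal: at each step of a maximal chain in $\overline{\OO_{d+1}}$ one either merges two adjacent singleton-level blocks (a ``$\pi$-type'' degeneration) or equates two adjacent gaps (a ``$\tau$-type'' degeneration), and these are exactly the two families of walls in Definition~\ref{defn:NBF}. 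Once that dictionary is nailed down, the rest is routine and parallels the Boolean-algebra case verbatim.
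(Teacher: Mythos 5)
Your plan is correct and its combinatorial core coincides with the paper's: both arguments hinge on the bijection $(\pi,\tau)\leftrightarrow \ch(\pi,\tau)$ between $\fS_{d+1}\times\fS_d$ and maximal chains of $\OO_{d+1}$ (Lemma~\ref{lem:diff}), and on showing that the spanning rays of $\sigma(\pi,\tau)$ are exactly the vectors $\ee_{\cT(\pi,\tau;r)}$ attached to the non-maximum elements of that chain, after which the $k$-chain statement and simpliciality are formal. The difference is in the route to that identification: you pass through the polytope $\Pi_d^2(M,N)$ (facets dual to rays, vertices dual to maximal cones, relative-interior points of faces), whereas the paper never leaves the fan side --- it writes each ray of $\sigma(\pi,\tau)$ as the solution set of the system obtained by keeping exactly one of the inequalities $0\le (\Delta\xx)^\pi_{\tau^{-1}(1)}\le\cdots\le(\Delta\xx)^\pi_{\tau^{-1}(d)}$ strict (equation~\eqref{equ:chainineqr}) and simply checks that $\ee_{\cT(\pi,\tau;r)}$ satisfies it, using the block structure of $\cT(\pi,\tau;r)$. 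That direct verification is precisely the ``dictionary'' you defer to the end as the main obstacle: the wall $0=(\Delta\xx)^\pi_{\tau^{-1}(1)}$ is your ``$\pi$-type'' degeneration and the walls $(\Delta\xx)^\pi_{\tau^{-1}(j)}=(\Delta\xx)^\pi_{\tau^{-1}(j+1)}$ are your ``$\tau$-type'' ones, and they are indexed exactly by the covers in $\ch(\pi,\tau)$, so no overcounting occurs. Your polytope-side framing buys geometric intuition (and essentially re-proves part of Theorem~\ref{thm:facetdes} along the way), but it costs extra work to pin down which vertices lie on which facet; the paper's fan-side computation is shorter and self-contained. One small caution if you execute your version: the cone you write down for a single $\cT$, namely $\{\ww : w_i\le w_{i'} \text{ for } i\in S_a,\ i'\in S_b,\ a\le b\}$, is a face of $\Br_d$ of dimension equal to the number of blocks minus one, not a ray of $\Br_d^2$; the ray $\ee_\cT$ is cut out only after also imposing that all nonzero consecutive gaps are equal, which is the extra $\tau$-data your ``intersected appropriately'' must supply.
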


As each maximal cone $\sigma(\pi,\tau)$ of $\Br_d^2$ is indexed by $(\pi,\tau) \in \fS_{d+1} \times \fS_d$, and maximal chains in $\overline{\OO_{d+1}}$ are obtained from maximal chains in $\OO_{d+1}$ by removing the top element,
we will prove the above proposition by providing a bijection between $(\pi,\tau) \in \fS_{d+1}\times \fS_d$ and maximal chains in $\OO_{d+1}.$

We first observe that the rank-$0$ element $\cT(\pi): \pi^{-1}(1)|\pi^{-1}(2)|\cdots|\pi^{-1}(d+1)$ contains $d$ bars, and any element of rank $r$ in the interval $[\cT(\pi), \hat{1}]$ can be obtained from $\cT(\pi)$ by removing an $r$-subset of the $d$ bars. Conversely, any element of rank $r$ arises this way. This gives the following lemma. 

\begin{lemma}\label{lem:localbool}
	For each $\pi \in \fS_{d+1},$ the interval $[\cT(\pi), \hat{1}]$ is isomorphic to the Boolean algebra $\BB_{d}.$ Hence, the poset $\OO_{d+1}$ is \emph{locally Boolean}, i.e., all of its intervals are Boolean algebras. 
\end{lemma}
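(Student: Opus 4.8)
\textbf{Proof proposal for Lemma \ref{lem:localbool}.}

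The plan is to make precise the observation sketched in the paragraph immediately preceding the lemma, namely that an ordered set partition refining $\cT(\pi)$ is exactly the data of which ``bars'' of the string $\pi^{-1}(1)\,|\,\pi^{-1}(2)\,|\,\cdots\,|\,\pi^{-1}(d+1)$ one chooses to keep. First I would fix $\pi \in \fS_{d+1}$ and label the $d$ gaps between consecutive singletons by the set $[d]$, where gap $i$ sits between $\pi^{-1}(i)$ and $\pi^{-1}(i+1)$. The key claim is: an element $\cT \in \OO_{d+1}$ satisfies $\cT(\pi) \le \cT \le \hat{1}$ if and only if $\cT$ is a coarsening of $\cT(\pi)$, and every coarsening of $\cT(\pi)$ is obtained by choosing a subset $B \subseteq [d]$ of gaps to retain, merging (in order) each maximal run of singletons between two retained gaps into a single block. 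I would then define the map $\Phi_\pi \colon [\cT(\pi),\hat 1] \to \BB_d$ by sending $\cT$ to the set of retained gaps, i.e. $\Phi_\pi(\cT) = \{\, i \in [d] : \pi^{-1}(i) \text{ and } \pi^{-1}(i+1) \text{ lie in different blocks of } \cT \,\}$. The content to verify is that $\Phi_\pi$ is a well-defined poset isomorphism.

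The steps, in order: (1) show $\Phi_\pi$ is well-defined, i.e. that the blocks of any $\cT \ge \cT(\pi)$ really are intervals in the linear order $\pi^{-1}(1),\dots,\pi^{-1}(d+1)$ — this is because each block of $\cT$ is a union of singleton blocks of $\cT(\pi)$, and if a block contained $\pi^{-1}(i)$ and $\pi^{-1}(j)$ with $i<j$ but not some $\pi^{-1}(k)$, $i<k<j$, then the induced order of $\cT$ on $[d+1]$ would not be consistent with any ordering (the block is a \emph{set}, but it must appear as a single unit in the ordered tuple, forcing all singletons between its extremes to join it). (2) Construct the inverse $\Psi_\pi \colon \BB_d \to [\cT(\pi),\hat1]$: given $B = \{b_1 < b_2 < \cdots < b_r\} \subseteq [d]$, output the ordered partition whose blocks are, in order, $\{\pi^{-1}(1),\dots,\pi^{-1}(b_1)\}$, $\{\pi^{-1}(b_1+1),\dots,\pi^{-1}(b_2)\}$, \dots, $\{\pi^{-1}(b_r+1),\dots,\pi^{-1}(d+1)\}$; check $\Psi_\pi(B) \ge \cT(\pi)$ and $\Phi_\pi \circ \Psi_\pi = \mathrm{id}$, $\Psi_\pi \circ \Phi_\pi = \mathrm{id}$. (3) Check order-preservation in both directions: $\cT \le \cT'$ in $\OO_{d+1}$ (i.e. $\cT'$ coarsens $\cT$) iff $\Phi_\pi(\cT) \supseteq \Phi_\pi(\cT')$, which holds because merging blocks only removes retained gaps. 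Note the order convention: the minimum $\cT(\pi)$ maps to $[d] \in \BB_d$ and the maximum $\hat 1$ maps to $\emptyset$, so $\Phi_\pi$ is an \emph{anti}-isomorphism onto $\BB_d$; since $\BB_d$ is self-dual this still gives $[\cT(\pi),\hat1] \cong \BB_d$, but I would state this carefully (either compose with complementation $B \mapsto [d]\setminus B$, or just invoke self-duality). (4) For the ``locally Boolean'' conclusion, observe that an arbitrary interval $[\cT, \cT']$ of $\OO_{d+1}$ is contained in some $[\cT(\pi), \hat1]$: pick any $\pi$ with $\cT(\pi) \le \cT$ (such $\pi$ exists since $\cT(\pi)$ ranges over all minimal elements and every element lies above at least one of them — choose a linear extension of the order of blocks of $\cT$ and refine each block arbitrarily). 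Then $[\cT,\cT']$ is an interval in the Boolean algebra $[\cT(\pi),\hat1] \cong \BB_d$, and every interval of a Boolean algebra is again a Boolean algebra, so we are done.

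I expect the main obstacle to be step (1): articulating cleanly \emph{why} a block of a coarsening of $\cT(\pi)$ must be an interval in the $\pi$-order, rather than an arbitrary union of singletons. The subtlety is that blocks of an ordered set partition are unordered sets, so a priori a coarsening could group non-consecutive singletons; the point is that the resulting tuple must still be a \emph{valid ordered} partition, and the induced linear preorder on $[d+1]$ coming from ``$x$ is in an earlier block than $y$'' must extend the refinement relation consistently — I would phrase this as: the blocks of $\cT'$, being unions of blocks of $\cT$, inherit a well-defined ``position'' from the ordering of $\cT$, and the only way this is consistent is if each block of $\cT'$ is a contiguous run of blocks of $\cT$. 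Once this combinatorial fact is nailed down, everything else is routine bookkeeping, and the ``locally Boolean'' half follows formally from the standard fact that intervals of Boolean algebras are Boolean.
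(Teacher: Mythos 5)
Your proposal is correct and takes essentially the same route as the paper: the paper's entire proof is the bar-removal observation in the paragraph immediately preceding the lemma (elements of rank $r$ in $[\cT(\pi),\hat 1]$ correspond to $r$-subsets of the $d$ bars removed), and you are simply making that precise, including the order-reversal/self-duality point and the reduction of general intervals to intervals of a maximal one. The only remark worth adding is that the obstacle you anticipate in step (1) — contiguity of the blocks of a coarsening — is already built into the refinement order on \emph{ordered} set partitions (each block of the coarser partition is a union of consecutive blocks of the finer one), so it needs no separate argument.
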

	
Moreover, the discussion above provides a natural way to construct a desired bijection for the proof of Proposition \ref{prop:charbr2}.
\begin{notation}
We represent each $(\pi,\tau)$ with the following diagram, denoted by $\cD(\pi,\tau)$:
\[
\pi^{-1}(1)\stackrel{\tau(1)}{|}  \pi^{-1}(2) \stackrel{\tau(2)}{|}  \cdots  \stackrel{\tau(d-1)}{|}  \pi^{-1}(d) \stackrel{\tau(d)}{|}\pi^{-1}(d+1).
\]
\end{notation}

\begin{definition}
	Let $(\pi,\tau) \in \fS_{d+1} \times \fS_d,$ we define $\ch(\pi,\tau)$ to be the unique maximal chain in $[\cT(\pi), \hat{1}]$ that is obtained in the following way: 
	\begin{enumerate}
		\item Let $\cD(\pi,\tau; 0) = \cD(\pi,\tau)$. 
		\item For each $1 \le r \le d,$ we let $\cD(\pi,\tau;r)$ be the diagram obtained from $\cD(\pi,\tau;r-1)$ by removing the bar labelled by $r.$
		\item For each $0 \le r \le d,$ ignoring the labels on bars gives an ordered set partition in $[\cT(\pi),\hat{1}]$ of rank $r$, and we denote it by $\cT(\pi,\tau; r).$
		\item Let $\ch(\pi,\tau)$ be the maximal chain formed by $\{ \cT(\pi,\tau;r) \ : \ 0 \le r \le d\}.$
	\end{enumerate}

\end{definition}

%

\begin{example}\label{ex:nestedperm2}
	Let $(\pi,\tau) = (3241, 231).$ Then $\cD(\pi,\tau)$ is the diagram \begin{equation}\label{eq:diagram}
4\stackrel{2}{|}2\stackrel{3}{|}1\stackrel{1}{|}3,
\end{equation}
	and $\ch(\pi,\tau)$ is as shown in the box on the left side of Figure \ref{fig:maxchain}, where the arrows demonstrate the procedure we describe above. In the middle of the figure (or the third column of the figure), we list the rays $\ee_{\cT(\pi,\tau;i)}$ associated with $\cT(\pi,\tau;r)$ for each $r.$
	Finally, in the fourth column, we show the difference between any two consecutive associated rays, which turns out to be important.
\begin{figure}[h]
	\begin{tikzpicture}[scale=0.9]

\node at (-2.4,3.8) {${\cD(\pi,\tau; r)}$};
\node at (-2.4,0) {$4\stackrel{2}{|}2\stackrel{3}{|}1\stackrel{1}{|}3$};	
\node at (-2.4,0.5) {$\uparrow$};
\node at (-2.4,1) {$4\stackrel{2}{|}2\stackrel{3}{|}1 \ \ 3$};
\node at (-2.4,3/2) {$\uparrow$};
\node at (-2.4,4/2) {$4 \ \ 2\stackrel{3}{|}1 \ \ 3$};
\node at (-2.4,5/2) {$\uparrow$};
\node at (-2.4,6/2) {$4 \ \ 2 \ \ 1 \ \ 3$};

\node at (-1,0) {$\longrightarrow$};
\node at (-1,1) {$\longrightarrow$};
\node at (-1,2) {$\longrightarrow$};
\node at (-1,3) {$\longrightarrow$};

\node at (0,3.8) {${\cT(\pi,\tau; r)}$};
\node at (0,0) {$4|2|1|3$};
\node at (0,0.5) {$|$};
\node at (0,1) {$4|2|13$};
\node at (0,3/2) {$|$};
\node at (0,4/2) {$42|13$};
\node at (0,5/2) {$|$};
\node at (0,6/2) {$4213$};

\draw (-0.6,-0.3) -- (0.6,-0.3) -- (0.6,3.3) -- (-0.6,3.3) -- cycle;

\begin{scope}[xshift=0.5cm]
\node at (3,0) {$\ee_4+2\ee_2+3\ee_1+4\ee_3$};
\node at (3,1) {$\ee_4+2\ee_2+3\ee_1+3\ee_3$};
\node at (3,2) {$\ee_4+\ee_2+2\ee_1+2\ee_3$};
\node at (3,3) {$\ee_4+\ee_2+\ee_1+\ee_3$};
\node at (3,3.8) {$\ee_{\cT(\pi,\tau; r)}$};
\end{scope}

\begin{scope}[xshift=1cm]
\node at (6.6,3.8) {$\ee_{\cT(\pi,\tau; r-1)} - \ee_{\cT(\pi,\tau;r)}$};
\node at (6.6, 0.5) {$\ee_{\{3\}} = \ee_3$};
\node at (6.6, 1.5) {$\ee_{\{213\}} = \ee_2 + \ee_1 + \ee_3$};
\node at (6.6, 2.5) {$\ee_{\{13\}} = \ee_1 + \ee_3$};

\node at (10,3.8) {$\Gamma_{\tau^{-1}(r)}^\pi$};
\node at (10, 0.5) {$\Gamma_{3}^\pi =\{3\}$};
\node at (10, 1.5) {$\Gamma_{1}^\pi = \{213\}$};
\node at (10, 2.5) {$\Gamma_{2}^\pi =\{13\}$};

\node at (8.7, 0.5) {$\longleftarrow$};
\node at (8.7, 1.5) {$\longleftarrow$};
\node at (8.7, 2.5) {$\longleftarrow$};
\end{scope}
\end{tikzpicture}
\caption{Maximal chain and associated rays}
\label{fig:maxchain}
\end{figure}
\end{example}

One may notice that in Figure \ref{fig:maxchain} that the differences $\ee_{\cT(\pi,\tau; r-1)} - \ee_{\cT(\pi,\tau;r)}$ can be understood in a more systematic way. We use the following notation.
\begin{notation}
	Fix $\pi \in \fS_{d+1}$. For $1 \le i \le d,$ let 
	\[ \Gamma_i^{\pi} := \pi^{-1}(i, d+1] = \{ \pi^{-1}(j) \ : \ i < j \le d+1\}.\]
\end{notation}

The following lemma is clear from the construction of $\ch(\pi,\tau)$. So we omit its proof.
\begin{lemma}\label{lem:diff}
	The map $(\pi,\tau) \to \ch(\pi,\tau)$ is a bijection from $\fS_{d+1}\times \fS_d$ to maximal chains of $\OO_{d+1}$ (or equivalently, to maximal chains of $\overline{\OO_{d+1}}$).

	Furthermore, for each $1 \le r \le d,$ 
\begin{equation}\label{equ:diff}
\ee_{\cT(\pi,\tau; r-1)} - \ee_{\cT(\pi,\tau;r)} = \ee_{\Gamma_{\tau^{-1}(r)}^\pi}.
\end{equation}
\end{lemma}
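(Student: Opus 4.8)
The plan is to prove Lemma \ref{lem:diff} in two parts: first the bijectivity of $(\pi,\tau)\mapsto \ch(\pi,\tau)$, then the explicit formula \eqref{equ:diff} for the consecutive differences of the associated rays. Both follow by unwinding the construction of $\ch(\pi,\tau)$ together with Lemma \ref{lem:localbool}.

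For the bijection, I would argue as follows. Fix $\pi\in\fS_{d+1}$. By Lemma \ref{lem:localbool}, the interval $[\cT(\pi),\hat 1]$ is isomorphic to $\BB_d$, where the isomorphism sends an element obtained from $\cT(\pi)$ by deleting a subset $B$ of the $d$ bars to the set $B\subseteq[d]$ (bars indexed $1$ through $d$ from left to right in positions of $\cT(\pi)$). Under this isomorphism, a maximal chain of $[\cT(\pi),\hat 1]$ is exactly a maximal chain in $\BB_d$, i.e.\ an ordering of the $d$ bars specifying the order in which they are deleted; such an ordering is precisely the data of a permutation $\tau\in\fS_d$ (bar in position $i$ of $\cD(\pi,\tau)$ is deleted at step $\tau(i)$, equivalently the bar deleted at step $r$ is the one in position $\tau^{-1}(r)$). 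So for each fixed $\pi$ the map $\tau\mapsto\ch(\pi,\tau)$ is a bijection onto the maximal chains through $\cT(\pi)$. Since every maximal chain of $\OO_{d+1}$ passes through a unique rank-$0$ element, and the rank-$0$ elements are exactly the $\cT(\pi)$ for $\pi\in\fS_{d+1}$ (Definition \ref{defn:osp}), ranging over all $\pi$ gives a bijection from $\fS_{d+1}\times\fS_d$ onto all maximal chains of $\OO_{d+1}$. Removing $\hat 1$ gives the bijection onto maximal chains of $\overline{\OO_{d+1}}$.

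For formula \eqref{equ:diff}, fix $(\pi,\tau)$ and $1\le r\le d$, and let $p=\tau^{-1}(r)$ be the position of the bar deleted at step $r$. Passing from $\cD(\pi,\tau;r-1)$ to $\cD(\pi,\tau;r)$ merges the two blocks of $\cT(\pi,\tau;r-1)$ that are separated by that bar. By the construction, at stage $r-1$ all bars with labels $\le r-1$ have been removed, so the block immediately to the left of position $p$ consists of the entries $\pi^{-1}(j)$ for $j$ in a maximal run ending at $p$, and the block immediately to the right consists of the entries $\pi^{-1}(j)$ for $j$ in the maximal run starting at $p+1$. The key point (which I would record as the crux of the computation) is that when these two blocks merge, every entry $\pi^{-1}(j)$ lying in or to the right of position $p+1$ has its block-index in the ordered partition drop by exactly $1$, while entries to the left are unaffected. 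Since $\ee_{\cT}=\sum_i i\,\ee_{S_i}$, the difference $\ee_{\cT(\pi,\tau;r-1)}-\ee_{\cT(\pi,\tau;r)}$ is therefore $\sum\{\ee_{\pi^{-1}(j)} : j>p\}=\ee_{\pi^{-1}(p,d+1]}=\ee_{\Gamma_p^\pi}=\ee_{\Gamma_{\tau^{-1}(r)}^\pi}$, as claimed. (The numerical example in Figure \ref{fig:maxchain} confirms each case.)

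The main obstacle is purely bookkeeping: one must be careful that the labels on the bars in $\cD(\pi,\tau)$ are the \emph{values} $\tau(i)$, not positions, so that "the bar deleted at step $r$" is in position $\tau^{-1}(r)$, and one must correctly identify which blocks of the \emph{current} (partially merged) ordered partition flank that bar. Once the indexing convention is pinned down, the block-index-shift observation is immediate and the rest is a one-line sum. No genuinely hard step is involved; the lemma is stated as "clear from the construction," so the proof is an exercise in making that construction explicit, and indeed the paper chooses to omit it.
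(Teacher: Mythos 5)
Your proof is correct, and it fills in exactly the argument the paper deliberately omits (the authors state the lemma is ``clear from the construction of $\ch(\pi,\tau)$'' and give no proof): the bijection comes from pairing the unique rank-$0$ element $\cT(\pi)$ of a maximal chain with the bar-deletion order encoded by $\tau$ via the Boolean-interval isomorphism of Lemma \ref{lem:localbool}, and the difference formula comes from observing that deleting the bar at position $\tau^{-1}(r)$ decreases by $1$ the block index of precisely the entries $\pi^{-1}(j)$ with $j>\tau^{-1}(r)$, so that $\ee_{\cT(\pi,\tau;r-1)}-\ee_{\cT(\pi,\tau;r)}=\ee_{\Gamma^{\pi}_{\tau^{-1}(r)}}$. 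Both halves check out against the worked example in Figure \ref{fig:maxchain}, and your care with the label-versus-position convention for $\tau$ is exactly the bookkeeping the lemma requires.
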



\begin{example}
In our running example, where $(\pi,\tau) = (3241, 231),$ we have
\[ \Gamma_1^\pi = \{ 213\}, \quad \Gamma_2^\pi = \{ 13\}, \quad \Gamma_3^\pi = \{3\}.\]
Then the differences $\ee_{\cT(\pi,\tau; r-1)} - \ee_{\cT(\pi,\tau;r)}$ can be computed using \eqref{equ:diff} as shown in the last column of Figure \ref{fig:maxchain}.
For instance, $\tau^{-1}(3)=2$ implies that $\ee_{\Gamma_2^\pi} = \ee_{\{13\}}$ gives the difference vector $\ee_{\cT(\pi,\tau; 2)} - \ee_{\cT(\pi,\tau;3)}$.
\end{example}

\begin{proof}[Proof of Proposition \ref{prop:charbr2}] It is enough to show that $\sigma(\pi,\tau)$ is spanned by the rays $\ee_{\cT(\pi,\tau; r)}$, where $0 \le r \le d-1,$ associated to non-maximum elements in the maximal chain $\ch(\pi,\tau).$ (All the conclusions in the proposition follow from it.)

It follows from the definition that $\sigma(\pi,\tau)$ is the collection of $\xx \in W_d$ satisfying
\begin{equation}\label{eq:chainineq2}
	0\leq (\Delta \xx)^\pi_{\tau^{-1}(1)}\leq (\Delta \xx)^\pi_{\tau^{-1}(2)}\leq\cdots \leq (\Delta \xx)^\pi_{\tau^{-1}(d)}.
\end{equation}
The rays of $\sigma(\pi,\tau)$ are obtained by having one strict inequality in \eqref{eq:chainineq2} and equalities in the rest, i.e., by
\begin{equation}\label{equ:chainineqr}
	0 = (\Delta \xx)^\pi_{\tau^{-1}(1)}=(\Delta \xx)^\pi_{\tau^{-1}(2)}=\cdots=(\Delta \xx)^\pi_{\tau^{-1}(r)}<(\Delta \xx)^\pi_{\tau^{-1}(r+1)} =\cdots= (\Delta \xx)^\pi_{\tau^{-1}(d)} =1.
\end{equation}
The right hand side can be any positive constant as the solution will just be off by a scale; hence, we let it be $1.$ As there is a unique solution (if one exists) to \eqref{equ:chainineqr}, it is enough to verify $\ee_{\cT(\pi,\tau;r)}$ is a solution. Indeed, by the construction of $\cT(\pi,\tau;r),$ the followings are true: 
\begin{enumerate}
	\item If $i \le r,$ then $\pi^{-1}(\tau^{-1}(i)+1)$ is in the same block of $\cT(\pi,\tau;r)$ as $\pi^{-1}(\tau^{-1}(i))$. 
	\item If $i > r,$ then $\pi^{-1}(\tau^{-1}(i)+1)$ is in the block of $\cT(\pi,\tau;r)$ that follows the block where $\pi^{-1}(\tau^{-1}(i))$ is in.  
\end{enumerate}
Then the desired conclusion follows from the definition of $\ee_\cT$ (see \eqref{equ:defneT}) for any ordered set partition $\cT$. 
\end{proof}

As a summary, we have associated three objects to each pair of $(\pi,\tau)$. The proofs of Propositions \ref{prop:fanofusual2} and \ref{prop:charbr2} tell us the connection between them, which are summarized in the diagram below. 
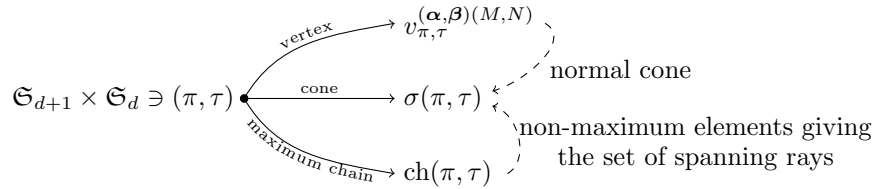
\begin{figure}[h]
\begin{tikzpicture}
\node[left] at (-1,2) {$\fS_{d+1}\times\fS_d\ni(\pi,\tau)$};
\draw[->, postaction={decorate,decoration={raise=0.5ex,text along path,text align=center, text={|\tiny|vertex}}}] (-1,2) to [out=60, in=190]  (1,3);
\draw[fill](-1,2) circle [radius=0.05];
\node[right] at (1,3) {$v_{\pi,\tau}^{(\balpha,\bbeta)(M,N)}$};
\draw[->,dashed] (3,3) to [out=330, in=15]  (2.3,2.1);
\draw[->,dashed] (2.5,1) to [out=30, in=345] (2.3,1.9);
\node[right] at (1,2) {$\sigma(\pi,\tau)$};
\node at (4,2.4) {normal cone};

\node[right] at (1,1) {$\ch(\pi,\tau)$};
\node at (5,1.6) {non-maximum elements giving}; 
\node at (5,1.2) {the set of spanning rays};
\draw[->, postaction={decorate,decoration={raise=0.5ex,text along path,text align=center, text={|\tiny|cone}}}] (-1,2) -- (1,2);
\draw[->, postaction={decorate,decoration={raise=-1ex,text along path,text align=center, text={|\tiny|maximum chain}}}] (-1,2) to [out=300, in=170]  (1,1);
\end{tikzpicture}
\caption{Relation between objects associated to $(\pi,\tau)$}
\label{fig:relation}
\end{figure}

\begin{example} \label{ex:nestedperm3}
	Let $(\pi,\tau)=(3241,231).$ As shown in Examples \ref{ex:nestedperm} and \ref{ex:nestedperm2} that $v_{3241,231}^{(4,1)} = (14,7,17,2)$, the maximal chain $\ch(3241,231)$ and its associated rays are given in Figure \ref{fig:maxchain}. So the normal cone of $\Pi_3^2(4,1)$ at the vertex $v_{3241,231}^{(4,1)}$ is $\sigma(3241,231)$. It is spanned by the rays associated to non-maximum elements in $\ch(3241,231),$ which are the three vectors on the bottom of the middle column in Figure \ref{fig:maxchain}. This helps us to find three facet-defining inequalities for $\Pi_3^2(4,1):$
\begin{align*}
	\left\langle \ee_{42|13}, \xx \right\rangle = 2x_1+x_2+2x_3+x_4 &\leq	\left\langle \ee_{42|13}, (14,7,17,2) \right\rangle =  71\\
	\left\langle \ee_{4|2|13}, \xx \right\rangle = 3x_1+2x_2+3x_3+x_4 &\leq\left\langle \ee_{4|2|13}, (14,7,17,2) \right\rangle = 109\\
	\left\langle \ee_{4|2|1|3}, \xx \right\rangle = 3x_1+2x_2+4x_3+x_4 &\leq\left\langle \ee_{4|2|1|3}, (14,7,17,2) \right\rangle = 126.
\end{align*}
\end{example}
\subsection*{Inequality description of usual nested permutohedra}

It follows from Propositions \ref{prop:fanofusual2} and \ref{prop:charbr2} and Definition \ref{defn:gen2} that any generalized nested permutohedron in $\RR^{d+1}$ is defined by the linear system in the form of
	\begin{equation} \label{equ:linear2}
		\langle \ee_{[d+1]}, \xx \rangle = \langle \1, \xx \rangle \ = \ b_{[d+1]}, \quad \text{and} \quad 
		\langle \ee_\cT, \xx \rangle \ \le \ b_\cT, \quad \forall \cT \in \overline{\OO_{d+1}}. 
	\end{equation}
	Note that $[d+1]$ can be considered as the maximal element in $\OO_{d+1}$ which is an ordered set partition, and also can be considered as the maximal element in $\BB_{d+1}$, which is a set. Either way, $\ee_{[d+1]}$ represents the all-one vector $\1.$ Hence, we may consider all the $b$'s appearing in \eqref{equ:linear2} as a vector $\bb \in \RR^{\OO_{d+1}}$ where indices are elements in $\OO_{d+1}.$ 
	
	It is interesting to obtain results that are analogous to those in Theorems \ref{thm:centralsub} and \ref{thm:submodrestate}. We will do this in the next part. Before that we focus on usual nested permutohedra. When we introduced usual nested permutohedra, we defined them as convex hull of all of their vertices in \eqref{equ:defnusual2}. Now we can give an inequality description in the form of \eqref{equ:linear2} by giving an explicit description for $\bb$. It turns out each coordinate $b_\cT$ is determined by its structure type.

	\begin{definition}\label{defn:type}
		Let $\cT = S_1| S_2| \dots | S_{k+1} \in \OO_{d+1}.$ We define the \emph{structure type} of $\cT$, denoted by $\Type(\cT)$, to be the sequence $(t_0=0, t_1, t_2, \dots, t_{k+1}=d+1),$ where 
		\[ t_i = \sum_{j=1}^i |S_j|, \quad \text{for $1 \le i \le k$}.\]
(We can also understand $t_i (1 \le i \le k)$ as the position number of the $i$th bar in $\cT.$)		
\end{definition}

\begin{theorem}\label{thm:facetdes}
		Suppose $(\balpha, \bbeta) \in \RR^{d+1} \times \RR^d$ is a pair of strictly increasing sequences $(\balpha, \bbeta) \in \RR^{d+1} \times \RR^d$ and $(M,N) \in \RR_{>0}^2$ is an appropriate choice for $(\balpha, \bbeta)$. Suppose $\bb \in \RR^{\OO_{d+1}}$ is defined as follows: for each $\cT \in \OO_{d+1},$ if $\Type(\cT)=(t_0, t_1,t_2,\dots, t_k, t_{k+1}),$ let
\begin{equation}
	b_{\cT} = M \left( \sum_{i=1}^{k+1} i \sum_{j=t_{i-1}+1}^{t_i} \alpha_j\right) + N \sum_{j=d-k+1}^d \beta_j.
	\label{equ:usualb}
\end{equation}
Then the linear system \eqref{equ:linear2} defines the usual nested permutohedron $\Perm(\balpha, \bbeta; M,N)$. 
\end{theorem}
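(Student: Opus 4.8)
The plan is to derive the inequality description entirely from the normal-fan description already established, so that the only real work is a single support-function computation. Set $Q := \Perm(\balpha,\bbeta;M,N)$. By Proposition \ref{prop:fanofusual2} the normal fan of $Q$ is the complete fan $\Br_d^2$; in particular $Q$ lies in the hyperplane $\{\xx : \langle\1,\xx\rangle = M\sum_{i=1}^{d+1}\alpha_i\}$, a translate of $V_d$ (every vertex $v_{\pi,\tau}^{(\balpha,\bbeta),(M,N)}$ lies there, as noted after Definition \ref{defn:usual2}). Since the facets of $Q$ (within its affine hull) correspond to the rays of its normal fan, with the ray generators serving as outer facet normals (see \cite{zie}), Proposition \ref{prop:charbr2} gives
\[ Q = \Bigl\{\xx\in\RR^{d+1} : \langle\1,\xx\rangle = M\textstyle\sum_{i=1}^{d+1}\alpha_i,\ \ \langle\ee_\cT,\xx\rangle\le b_\cT \text{ for all }\cT\in\overline{\OO_{d+1}}\Bigr\}, \]
where $b_\cT := \max_{\yy\in Q}\langle\ee_\cT,\yy\rangle$. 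Thus it suffices to show $b_\cT$ equals the right-hand side of \eqref{equ:usualb}; taking $\cT = \hat{1}$ here (so $\Type(\hat{1}) = (0,d+1)$, $k = 0$, and the $\beta$-sum is empty) also identifies $M\sum\alpha_i$ with $b_{[d+1]}$.

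To compute $b_\cT$ I would fix $\cT\in\overline{\OO_{d+1}}$ with $\Type(\cT) = (t_0=0,t_1,\dots,t_k,t_{k+1}=d+1)$ and invoke Proposition \ref{prop:charbr2}: $\ee_\cT$ spans a ray of $\sigma(\pi,\tau) = \ncone(v_{\pi,\tau}^{(\balpha,\bbeta),(M,N)},Q)$ exactly when $\cT$ occurs in the maximal chain $\ch(\pi,\tau)$, and at least one such $(\pi,\tau)$ exists since $\ee_\cT$ is a ray of the complete fan $\Br_d^2$. For any such $(\pi,\tau)$ one has $b_\cT = \langle\ee_\cT, v_{\pi,\tau}^{(\balpha,\bbeta),(M,N)}\rangle$, and the $\ell$-th block of $\cT$ is $S_\ell = \{\pi^{-1}(t_{\ell-1}+1),\dots,\pi^{-1}(t_\ell)\}$. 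Plugging the expansion \eqref{equ:expansion} of $v_{\pi,\tau}^{(\balpha,\bbeta),(M,N)}$ (whose $\pi^{-1}(i)$-coordinate is $M\alpha_i + N(\beta_{\tau(i-1)}-\beta_{\tau(i)})$, with the convention $\beta_{\tau(0)} = \beta_{\tau(d+1)} = 0$) into $\langle\ee_\cT,\xx\rangle = \sum_{\ell=1}^{k+1}\ell\sum_{i\in S_\ell}x_i$ yields
\[ b_\cT = M\sum_{\ell=1}^{k+1}\ell\!\!\sum_{j=t_{\ell-1}+1}^{t_\ell}\!\!\alpha_j \;+\; N\sum_{\ell=1}^{k+1}\ell\!\!\sum_{i=t_{\ell-1}+1}^{t_\ell}\!\!\bigl(\beta_{\tau(i-1)}-\beta_{\tau(i)}\bigr), \]
and the first term is already the first term of \eqref{equ:usualb}.

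For the second term I would telescope the inner sum within each block, $\sum_{i=t_{\ell-1}+1}^{t_\ell}(\beta_{\tau(i-1)}-\beta_{\tau(i)}) = \beta_{\tau(t_{\ell-1})}-\beta_{\tau(t_\ell)}$, and then apply Abel summation; using $t_0 = 0$, $t_{k+1} = d+1$ and the boundary convention, $\sum_{\ell=1}^{k+1}\ell(\beta_{\tau(t_{\ell-1})}-\beta_{\tau(t_\ell)})$ collapses to $\sum_{\ell=1}^{k}\beta_{\tau(t_\ell)}$. Finally, since the construction of $\ch(\pi,\tau)$ deletes the bars of $\cD(\pi,\tau)$ in increasing order of their $\tau$-labels, the $k$ bars surviving in $\cT$ — those at positions $t_1,\dots,t_k$ — carry precisely the $k$ largest labels, so $\{\tau(t_1),\dots,\tau(t_k)\} = \{d-k+1,\dots,d\}$ and $\sum_{\ell=1}^k\beta_{\tau(t_\ell)} = \sum_{j=d-k+1}^{d}\beta_j$, which is exactly the second term of \eqref{equ:usualb}. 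The hard part is this last bookkeeping step: correctly matching the $\beta_j$ that survive the telescoping and Abel summation with the bar-labels that survive in $\cT$, via the definition of $\ch(\pi,\tau)$; it also furnishes a consistency check, since the final value must (and does) come out independent of which $(\pi,\tau)$ with $\cT\in\ch(\pi,\tau)$ is used. Everything else is formal once Propositions \ref{prop:fanofusual2} and \ref{prop:charbr2} are available.
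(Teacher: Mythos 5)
Your proposal is correct and follows essentially the same route as the paper: reduce to computing the support function $b_\cT=\max_{\yy\in Q}\langle\ee_\cT,\yy\rangle$ at each ray, evaluate it at a vertex $v_{\pi,\tau}^{(\balpha,\bbeta),(M,N)}$ with $\cT\in\ch(\pi,\tau)$ via Propositions \ref{prop:fanofusual2} and \ref{prop:charbr2}, and finish with the key combinatorial fact \eqref{equ:barcond} that the surviving bar labels are exactly $\{d-k+1,\dots,d\}$. The only (immaterial) difference is in evaluating the $N$-term: the paper pairs $\ee_\cT$ against each $\ff_i^\pi$ and observes $\langle\ee_\cT,\ff_i^\pi\rangle$ is the indicator of $i$ being a bar position of $\cT$, whereas you expand coordinates via \eqref{equ:expansion} and telescope plus Abel-sum — two bookkeepings of the same inner product.
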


\begin{proof} 
	We have discussed after Definition \ref{defn:usual2} that $\Perm(\balpha, \bbeta;M,N)$ lies on the hyperplane $\sum_{i=1}^{d+1} x_i = M \sum_{i=1}^{d+1} \alpha_i.$ This, together with Propositions \ref{prop:fanofusual2} and \ref{prop:charbr2}, implies that it is enough to verify that $\bb$ defined by \eqref{equ:usualb} satisfies 
	\begin{enumerate}
		\item $b_{[d+1]} = M \sum_{i=1}^{d+1} \alpha_i;$ and
		\item if $\cT \in \overline{\OO_{d+1}}$ is in the maximal chain $\ch(\pi,\tau),$ then $b_\cT = \left\langle \ee_\cT, v_{\pi,\tau}^{(\balpha,\bbeta),(M,N)} \right\rangle.$
	\end{enumerate}
Suppose $\cT = [d+1]$ is the maximal element of $\OO_{d+1}.$ Then $k=0$ and its structure type is $(0, d+1).$ The right hand side of \eqref{equ:usualb} becomes $M \sum_{j=1}^{d+1} \alpha_j$ as desired.

Suppose $\cT = S_1 | S_2 | \cdots | S_{k+1} \in \overline{\OO_{d+1}}$ has structure type $(t_0, t_1, \dots, t_{k+1}),$ and it belongs to the maximal chain $\ch(\pi,\tau).$ (Note that the choice of $(\pi,\tau)$ is not unique.) 
%
It is easy to see that $\cT$ is of rank $d-k.$ Hence, it is the rank-$(d-k)$ element $\cT(\pi,\tau;d-k)$ of the maximal chain $\ch(\pi,\tau).$ It follows from the construction of $\ch(\pi,\tau)$ that $\cD(\pi,\tau; d-k)$ is the following diagram:
\[
\pi^{-1}(1) \pi^{-1}(2) \cdots \pi^{-1}(t_1) \stackrel{\tau(t_1)}{|}  \pi^{-1}(t_1+1) \cdots \pi^{-1}(t_2) \stackrel{\tau(t_2)}{|}  \cdots  \cdots \stackrel{\tau(t_k)}{|}  \pi^{-1}(t_k+1) \cdots \pi^{-1}(d+1).
\]
Since $\cT = \cT(\pi,\tau;d-k) = S_1 |S_2 |\cdots|S_{k+1}$ is obtained from $\cD(\pi,\tau;d-k)$ by removing labels on the bars, and the labels on the bars have to be the largest $k$ elements in $[d],$ the followings are true
\begin{align}
	&	\{ \tau(t_1), \tau(t_2),\dots, \tau(t_k)\} = \{d-k+1, d-k+2, \dots, d\}; \label{equ:barcond} \\
&	S_i = \{ \pi^{-1}(j) : t_{i-1}+1 \le j \le t_i\} \quad \forall 1 \le i \le k+1. \label{equ:Si}
\end{align}
It follows from \eqref{equ:Si} that $\ee_\cT = \sum_{i=1}^{k+1} i \sum_{j=t_{i-1}+1}^{t_i} \ee_{\pi^{-1}(j)}$. Using this and \eqref{equ:defnv}, we obtain
\begin{align*}
	\left\langle \ee_\cT, v_{\pi,\tau}^{(\balpha,\bbeta),(M,N)} \right\rangle =&  M \left( \sum_{i=1}^{k+1} i \sum_{j=t_{i-1}+1}^{t_i} \alpha_j\right) + N \left\langle \ee_\cT, \sum_{i=1}^d \beta_{\tau(i)} \ff_{i}^\pi \right\rangle 
\end{align*}
However,
$\langle \ee_\cT, \ff_{i}^\pi \rangle = \langle \ee_\cT, \ee_{\pi^{-1}(i+1)} - \ee_{\pi^{-1}(i)} \rangle$ is $0$ if $\pi^{-1}(i)$ and $\pi^{-1}(i+1)$ are in the same block of $\cT$, and is $1$ otherwise, in which case they are in two consecutive blocks. One checks that the latter situation happens if and only if
$i = t_j$ for some $1 \le j \le k$, which is a position where a bar is placed. Therefore, it follows from \eqref{equ:barcond} that 
$\displaystyle \left\langle \ee_\cT, \sum_{i=1}^d \beta_{\tau(i)} \ff_{i}^\pi \right\rangle = \sum_{j=d-k+1}^d \beta_j$ as desired.
\end{proof}

\begin{example} 
	We apply Theorem \ref{thm:facetdes} to the regular nested permutohedron $\Pi_3^2(4,1)$ first studied in Example \ref{ex:nestedperm}. Clearly, the polytope lies in 
	\[ \langle \ee_{[4]}, \xx \rangle = x_1 + x_2 + x_3 + x_4 =b_{[d+1]} = M \sum_{i=1}^4 \alpha_i = 4 (1+2+3+4) = 40.\]
We then compute some of the inequalities:
\begin{align*}
\langle \ee_{23|4|1},\xx\rangle = 3x_1+x_2+x_3+2x_4\leq 4\Big(1(1+2)+2(3)+3(4)\Big)+1\Big(2+3\Big)&= 90\\
\langle \ee_{14|23},\xx\rangle = x_1+2x_2+2x_3+x_4\leq 4\Big(1(1+2)+2(3+4)\Big)+1\Big(3\Big)&= 71\\
\langle \ee_{4|123},\xx\rangle = 2x_1+2x_2+2x_3+x_4\leq 4\Big(1(1)+2(2+3+4)\Big)+1\Big(2+3\Big)&= 81.
\end{align*}

\end{example}

\subsection*{Deformation cone of $\Br_d^2$}

Finally, we are going to present results that are analogous to results on $\Def(\Br_d)$ and the Submodularity Theorem for generalized permutohedron discussed in Section \ref{sec:GP}. We first apply Proposition \ref{prop:reduxfan} to determine $\Def(\Br_d^2)$, or equivalently, the deformation cone of a \emph{centralized} usual nested permutohedron. 

In order to apply Proposition \ref{prop:reduxfan}, one needs to describe pairs of adjacent maximal cones in $\Br_d^2.$ By Proposition \ref{prop:charbr2}, this is equivalent to describing pairs of maximal chains in $\OO_{d+1}$ that only differ at a non-maximum element. 
Suppose $\ch_1$ and $\ch_2$ forms such a pair of maximal chains in $\OO_{d+1}$. Let $\cT = \ch_1 \setminus \ch_2$ and $\cT'= \ch_2\setminus \ch_1.$ Then $\cT$ and $\cT'$ are of same rank, say $r$, where $0 \le r < d.$ In this case, we say $\{ \ch_1, \ch_2\}$ is a pair of \emph{($r$-)adjacent} maximal chains in $\OO_{d+1}$. 

\begin{figure}[t]
\begin{tikzpicture}

	\begin{scope}[scale=0.8]
    \draw [black,fill] (0,0) circle [radius = 0.08];
    \draw[black, fill] (0,-1) circle [radius=0.08];
    \draw[black, fill] (1,1) circle [radius=0.08];
    \draw[black, fill] (-1,1) circle [radius=0.08];
    \draw[black, fill] (0,2) circle [radius=0.08];
    \draw[black, fill] (0,3) circle [radius=0.08];
    \draw (0,-1) -- (0,0) -- (1,1) -- (0,2);
    \draw (0,0) -- (-1,1) -- (0,2)--(0,3);
    \node[above] at (0,3.2) {\vdots};
    \node[below] at (0,-1) {\vdots};
    \node[below right] at (0,-1) {$\cT_{r-2}$};
    \node[below right] at (0,0) {$\cT_{r-1}$};
    \node[right] at (1,1) {$\cT'_{r}=\cT'$};
    \node[left] at (-1,1) {$\cT_{r}=\cT$};
    \node[right] at (0,2) {$\cT_{r+1}$};
    \node[right] at (0,3) {$\cT_{r+2}$};

    \node[scale=1.5] at (-1.7,3) {$\ch_1$};
    \node[scale=1.5] at (2.1,3) {$\ch_2$};
    \node at (0,-2.5) {a diamond with $0 < r < d$};
\end{scope}

	\begin{scope}[xshift=6cm, yshift=-2cm, scale=0.8]
    \draw[black, fill] (1,1) circle [radius=0.08];
    \draw[black, fill] (-1,1) circle [radius=0.08];
    \draw[black, fill] (0,2) circle [radius=0.08];
    \draw[black, fill] (0,3) circle [radius=0.08];
    \draw[black, fill] (0,4) circle [radius=0.08];
    \draw[black, fill] (0,5) circle [radius=0.08];
    \draw (1,1) -- (0,2) -- (0,3)--(0,4)--(0,5);
    \draw (-1,1) -- (0,2);
    \node[above] at (0,5.2) {\vdots};
    \node[right] at (1,1) {$\cT'_{r}$};
    \node[left] at (-1,1) {$\cT_{r}$};
    \node[right] at (0,2) {$\cT_{r+1}$};
    \node[right] at (0,3) {$\cT_{r+2}$};
    \node[right] at (0,4) {$\cT_{r+1}$};
    \node[right] at (0,5) {$\cT_{r+2}$};
    
    \node[scale=1.5] at (-1.7,3) {$\ch_1$};
    \node[scale=1.5] at (2.1,3) {$\ch_2$};
    \node at (0,0) {a ``\ren'' shape with $r=0$};
\end{scope}

\end{tikzpicture}
\caption{Two possibilities}
\label{fig:2poss}
\end{figure}
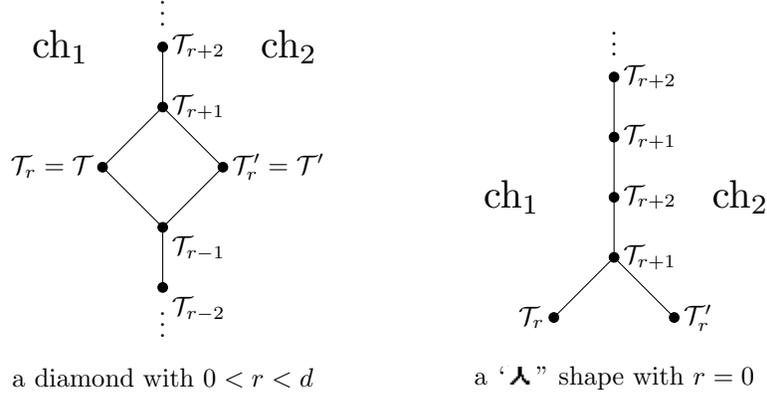

\begin{lemma}\label{lem:adjch}
	Suppose $\{ \ch_1, \ch_2\}$ is a pair of \emph{$r$-adjacent} maximal chains in $\OO_{d+1}$. Then (by Lemma \ref{lem:diff}) there exists a unique pair $(\pi_i, \tau_i) \in \fS_{d+1} \times \fS_d$ such that $\ch_i = \ch(\pi_i,\tau_i)$ for each $i.$ There are two situations:
	\begin{enumerate}
		\item (The diamond situation:) If $0 < r < d,$ then $\ch_1$ and $\ch_2$ form a diamond shape as shown on the left of Figure \ref{fig:2poss}. Furthermore, we have
		\begin{equation}\label{equ:dia}
		\pi_1 = \pi_2, \quad \text{and} \quad (r,r+1) \circ \tau_1 = \tau_2,
	\end{equation}
			where $(r,r+1)$ is the transposition that exchanges $r$ and $r+1.$
		\item (The \ren situation:) If $r=0,$ then $\ch_1$ and $\ch_2$ form a \ren (reads ``ren'') shape as shown on the right of Figure \ref{fig:2poss}. Suppose $\cT_{r+1} = \cT_1$, the minimum common element of $\ch_1$ and $\ch_2$, has its only $2$-element-block in $i$th position, that is, 
			\begin{equation}
			\cT_1 =s_1|s_2|\cdots|s_{i-1}|s_i s_i'|s_{i+1}|\cdots|s_d. 	
				\label{equ:rank1}
			\end{equation}
			Then  
		\begin{equation}\label{equ:ren}
		\tau_1 = \tau_2, \quad \tau_1(i)=1=\tau_2(i), \quad \text{and} \quad (i,i+1) \circ \pi_1 = \pi_2.
	\end{equation}
	\end{enumerate}
	Moreover, if $(\pi_1,\tau_1)$ and $(\pi_2,\tau_2)$ satisfy either \eqref{equ:dia} or \eqref{equ:ren}, then $\ch(\pi_1,\tau_1)$ and $\ch(\pi_2,\tau_2)$ are adjacent maximal chains in $\OO_{d+1}.$
\end{lemma}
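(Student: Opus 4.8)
The plan is to prove Lemma~\ref{lem:adjch} by carefully unwinding the bijection $(\pi,\tau)\mapsto\ch(\pi,\tau)$ from Lemma~\ref{lem:diff} together with the ``diagram'' combinatorics of $\cD(\pi,\tau)$. The key input is that $\OO_{d+1}$ is locally Boolean (Lemma~\ref{lem:localbool}): once we fix the minimum common element of $\ch_1$ and $\ch_2$, everything below it within each chain is forced, and everything above it lives in a Boolean interval $[\cT_{\min},\hat1]$ where two maximal chains can only differ at one rank by swapping the order in which two ``atoms'' (in that Boolean algebra) are adjoined. So structurally there is nothing mysterious about why we get a diamond or a ``\ren'' shape; the real content is identifying \emph{which} pair $(\pi_i,\tau_i)$ produces each chain.

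First I would set up the notation: write $\ch_1\cap\ch_2$ for the common part, let $\cT$ and $\cT'$ be the two differing elements at rank $r$, and let $\cT_{r+1}$ be their common cover. Case~(1), $0<r<d$: here the element $\cT_{r-1}$ of rank $r-1$ is common to both chains, so by Lemma~\ref{lem:localbool} the interval $[\cT(\pi),\hat1]$ is the same Boolean algebra for both --- in particular $\cT(\pi_1)=\cT(\pi_2)$, i.e. $\pi_1=\pi_2=:\pi$. Now I would look at the diagram picture: $\cD(\pi,\tau_i;r-1)$ has exactly $d-(r-1)$ bars left, and passing to rank $r$ removes the bar labelled $r$, while passing to rank $r+1$ removes the bar labelled $r+1$. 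The chains $\ch_1,\ch_2$ agreeing at ranks $\le r-1$ and at rank $\ge r+1$ but differing at rank $r$ forces that the \emph{set} of bar-positions removed at steps $r$ and $r+1$ is the same for both, but the \emph{order} is swapped; translating this back through the relation ``bar labelled $k$ sits at position $\tau_i^{-1}(k)$'' gives precisely $\tau_2=(r,r+1)\circ\tau_1$. I'd verify \eqref{equ:dia} by checking that $\cT(\pi,\tau_1;r)$ and $\cT(\pi,(r,r+1)\circ\tau_1;r)$ are indeed distinct (they differ by which of two blocks gets merged first) while all other ranks coincide.

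Case~(2), $r=0$: now $\cT$ and $\cT'$ are themselves rank-$0$ elements $\cT(\pi_1)$ and $\cT(\pi_2)$, so $\pi_1\ne\pi_2$, and the common cover $\cT_1$ of rank $1$ is the element displayed in \eqref{equ:rank1} with a single two-element block $s_is_i'$ in position $i$. Since $\cT_1$ covers both $\cT(\pi_1)$ and $\cT(\pi_2)$, each $\cT(\pi_j)$ is obtained from $\cT_1$ by splitting that block in one of the two orders, which forces $(i,i+1)\circ\pi_1=\pi_2$ (the two permutations differ exactly by transposing the values at adjacent positions $i,i+1$). For the $\tau$-coordinate: $\cT_1 = \cT(\pi_1,\tau_1;1)$ is obtained from $\cD(\pi_1,\tau_1)$ by deleting the bar labelled $1$, and that bar must be the one between positions $i$ and $i+1$, i.e. $\tau_1(i)=1$; the same reasoning applied to $\ch_2$ gives $\tau_2(i)=1$, and since for $r\ge1$ the two chains coincide and the interval above $\cT_1$ is the \emph{same} Boolean algebra in both cases, the removal order of the remaining bars agrees, giving $\tau_1=\tau_2$. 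This establishes \eqref{equ:ren}. The converse direction (``moreover'') is then a direct check: given $(\pi_1,\tau_1)$ and $(\pi_2,\tau_2)$ satisfying \eqref{equ:dia} (resp. \eqref{equ:ren}), run the construction of $\ch(\pi_i,\tau_i)$ step by step and observe that the diagrams agree at every rank except the single rank $r$ where the swapped transposition changes which bar is deleted first.

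The main obstacle I anticipate is purely bookkeeping: keeping straight the three layers of indexing --- positions in $[d+1]$, bar-positions in $[d]$, and the bar-labels given by $\tau$ --- and making sure the transposition appears on the correct side ($(r,r+1)\circ\tau_1$ versus $\tau_1\circ(\cdot)$, and $(i,i+1)\circ\pi_1$ versus a transposition of positions). I would handle this by always translating statements into the explicit diagram $\cD(\pi,\tau;\,\cdot\,)$ and reading off the ordered set partitions directly, rather than manipulating the permutations abstractly; the running Example~\ref{ex:nestedperm2} (with $(\pi,\tau)=(3241,231)$) is a good sanity check to run the swap on. A secondary subtlety is the boundary behaviour when $r=d-1$ in the diamond case (so $\cT_{r+1}=\hat1$) or when $i=d$ in the \ren case; these degenerate gracefully because of the conventions $\ee_\emptyset=0$ and $\ee_{[d+1]}=\1=0$ in $W_d$ already used in the proof of Theorem~\ref{thm:centralsub}, so no separate argument is needed beyond a remark.
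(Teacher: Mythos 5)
Your proposal is correct and follows essentially the same route as the paper's proof: for $0<r<d$ both chains share their minimum element, forcing $\pi_1=\pi_2$ and reducing the comparison to a swap of the bar-removal order at steps $r$ and $r+1$ (the paper records this as $\tau_1^{-1}(r)=\tau_2^{-1}(r+1)$, $\tau_1^{-1}(r+1)=\tau_2^{-1}(r)$), while for $r=0$ the two minimal elements are the two orderings of the split block, giving \eqref{equ:ren}, and the converse is a direct check of the construction. One small phrasing point: in the diamond case what you actually need is that the rank-$0$ elements agree (which holds since the chains differ only at rank $r>0$), not merely the rank-$(r-1)$ elements; with that adjustment your argument matches the paper's.
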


\begin{proof}
Suppose $r \neq 0.$ Then $\ch_1$ and $\ch_2$ has the same minimum element, say $\cT(\pi).$ Thus, $\ch_1$ and $\ch_2$ are two maximal chains in the maximal interval $[\cT(\pi), \hat{1}],$ and form a diamond. Hence, $\pi_1 = \pi = \pi_2.$
By the construction of $\ch(\pi,\tau)$, we must have
\begin{equation}
 \tau_1^{-1}(r) = \tau_2^{-1}(r+1), \text{ and } \tau_1^{-1}(r+1) = \tau_2^{-1}(r), 
	\label{equ:tautran}
\end{equation}
and $\tau_1^{-1}(i) = \tau_2^{-1}(i)$ for $i \neq r, r+1.$ This is equivalent to $(r, r+1) \circ \tau_1 = \tau_2.$

 Suppose $r=0$, and $\ch_1$ and $\ch_2$ are as shown on the right of Figure \ref{fig:2poss}. Assume further that $\cT_1$ is given by \eqref{equ:rank1}. 
 Then the two minimal elements in $\ch_1$ and $\ch_2$ are
 \[ s_1|s_2|\cdots|s_{i-1}|s_i| s_i'|s_{i+1}|\cdots|s_d, \quad \text{and} \quad s_1|s_2|\cdots|s_{i-1}|s_i'| s_i|s_{i+1}|\cdots|s_d.\]
 Then \eqref{equ:ren} follows from the construction of $\ch(\pi,\tau).$

 Finally, the last assertion can be easily verified.
 \end{proof}

 Using the connection between adjacent chains of $\OO_{d+1}$ and adjacent vertices of the regular nested permutohedron $\Perm(\balpha,\bbeta;M,N),$ we immediately have the following result.
 \begin{corollary}
	 The two vertices $v_{\pi_1,\tau_1}^{(\balpha,\bbeta),(M,N)}$ and $v_{\pi_2,\tau_2}^{(\balpha,\bbeta),(M,N)}$ are adjacent, i.e., form an edge, if and only if either \eqref{equ:dia} or \eqref{equ:ren} holds.
 \end{corollary}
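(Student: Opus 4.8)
The plan is to deduce this immediately from the general dictionary between edges of a simple polytope and pairs of adjacent maximal cones in its (simplicial) normal fan, which we have already set up in Section~\ref{sec:prel}, combined with the explicit combinatorial identifications in Propositions~\ref{prop:fanofusual2} and~\ref{prop:charbr2} and the chain-adjacency analysis in Lemma~\ref{lem:adjch}. Concretely, recall that for a simple polytope $P$ two distinct vertices $u,u'$ form an edge if and only if the normal cones $\ncone(u,P)$ and $\ncone(u',P)$ are adjacent maximal cones of the simplicial fan $\Sigma(P)$, i.e.\ their spanning ray sets differ in exactly one ray (equivalently, the cones share a common facet). This is the statement discussed just before Definition~\ref{defn:fanineq}.

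So I would argue as follows. First, by Proposition~\ref{prop:fanofusual2}, $\Br_d^2$ is the normal fan of $\Perm(\balpha,\bbeta;M,N)$, the map $(\pi,\tau)\mapsto v_{\pi,\tau}^{(\balpha,\bbeta),(M,N)}$ is a bijection onto the vertex set, and $\ncone\!\left(v_{\pi,\tau}^{(\balpha,\bbeta),(M,N)},\ \Perm(\balpha,\bbeta;M,N)\right)=\sigma(\pi,\tau)$; in particular $\Br_d^2$ is simplicial so the polytope is simple, and distinct pairs $(\pi,\tau)$ give distinct vertices. Hence $v_{\pi_1,\tau_1}^{(\balpha,\bbeta),(M,N)}$ and $v_{\pi_2,\tau_2}^{(\balpha,\bbeta),(M,N)}$ are adjacent if and only if $\sigma(\pi_1,\tau_1)$ and $\sigma(\pi_2,\tau_2)$ are adjacent maximal cones of $\Br_d^2$. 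Second, by Proposition~\ref{prop:charbr2} the spanning rays of $\sigma(\pi,\tau)$ are exactly the $\ee_{\cT}$ for $\cT$ a non-maximum element of the maximal chain $\ch(\pi,\tau)$; therefore $\sigma(\pi_1,\tau_1)$ and $\sigma(\pi_2,\tau_2)$ differ in exactly one ray if and only if $\ch(\pi_1,\tau_1)$ and $\ch(\pi_2,\tau_2)$ differ in exactly one element and that element is non-maximum, i.e.\ $\{\ch(\pi_1,\tau_1),\ch(\pi_2,\tau_2)\}$ is a pair of $r$-adjacent maximal chains in $\OO_{d+1}$ for some $0\le r<d$. Finally, Lemma~\ref{lem:adjch} (the forward direction together with its last sentence) says precisely that this occurs if and only if $(\pi_1,\tau_1),(\pi_2,\tau_2)$ satisfy either \eqref{equ:dia} or \eqref{equ:ren}. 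Chaining the three equivalences gives the corollary.

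The content here is essentially a bookkeeping assembly, so I do not expect a genuine obstacle; the only point requiring mild care is to make sure the ``non-maximum'' restriction is handled correctly when translating between chains of $\OO_{d+1}$ (which retains the maximum $\hat 1$) and chains of $\overline{\OO_{d+1}}$ indexing the actual rays — two maximal chains of $\OO_{d+1}$ that differ only in the maximum element would be equal, so there is no loss, and the $r$ appearing in Lemma~\ref{lem:adjch} automatically ranges over $0\le r<d$. It is also worth noting explicitly that $\Perm(\balpha,\bbeta;M,N)$ is $d$-dimensional inside its ambient affine hyperplane, so ``simple'' and ``simplicial normal fan'' refer to dimension $d$; this matches the setup in which $0\in\Br_d^2$ and $\Br_d^2$ is a complete projective fan in $W_d$.
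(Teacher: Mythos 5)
Your proposal is correct and is exactly the argument the paper intends: the paper states this corollary with only the remark that it follows ``immediately'' from the connection between adjacent maximal chains of $\OO_{d+1}$ and adjacent vertices of $\Perm(\balpha,\bbeta;M,N)$, which is precisely the chain of identifications (Proposition \ref{prop:fanofusual2}, Proposition \ref{prop:charbr2}, Lemma \ref{lem:adjch}) you spell out. Your extra care about the non-maximum element $\hat 1$ when passing between $\OO_{d+1}$ and $\overline{\OO_{d+1}}$ is a sensible bit of bookkeeping the paper leaves implicit.
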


 Each pair of adjacent maximal chains described in Lemma \ref{lem:adjch}, via its correspondence with a pair of adjacent maximal cones in $\Br_d^2,$ is associated with an inequality as described in Definition \ref{defn:fanineq}.  
In the lemma below, we describe this association explicitly. 

\begin{lemma}\label{lem:adjchineq} Assume all the hypothesis in Lemma \ref{lem:adjch}. Let $\sigma_i = \sigma(\pi_i,\tau_i)$ be the maximal cone in $\Br_d^2$ that is in bijection with $\ch_i$ for $i=1,2.$ 
	 \begin{enumerate}
		 \item (The diamond situation:) Suppose $\ch_1$ and $\ch_2$ form a diamond shape as shown on the left of Figure \ref{fig:2poss}. Then the associated inequality $I_{\{\sigma_1, \sigma_2\}}(\bb)$ is
			 \begin{equation}
		 b_{\cT_{r+1}} + b_{\cT_{r-1}} \le b_{\cT_r} + b_{\cT'_r}.
				 \label{equ:diaineq}
			 \end{equation}
We call such an inequality a \emph{diamond submodular inequality}.

\item (The \ren situation:)  Suppose $\ch_1$ and $\ch_2$ form a \ren shape as shown on the right of Figure \ref{fig:2poss}, and $\cT_1$ is given by \eqref{equ:rank1}. (We know that $\tau_1=\tau_2.$) Let $\tau=\tau_1=\tau_2,$ and then let $p = \tau(i-1)$ and $q = \tau(i+1).$ (By convention, let $\tau(0)=0$ and $\tau(d+1)=d+1.$) 
	Then the associated inequality $I_{\{\sigma_1, \sigma_2\}}(\bb)$ is
	\begin{equation}
	2b_{\cT_1} + \underbrace{\left(b_{\cT_{p-1}}-b_{\cT_{p}}\right)}_{\text{replaced with $b_{[d+1]}$ if $p=0$}} + \underbrace{\left(b_{\cT_{q-1}}-b_{\cT_{q}}\right)}_{\text{eliminated if $q=d+1$}} \ \le b_{\cT_0} + b_{\cT_0'}.  \label{equ:renineq}
	\end{equation}
	We call such an inequality a \emph{\ren inequality}.
	 \end{enumerate}
	 In both situations, we assume $b_{[d+1]} = b_{\hat{1}} = 0.$
 \end{lemma}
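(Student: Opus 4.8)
The plan is to apply Proposition \ref{prop:reduxfan} directly: for each pair of adjacent maximal cones $\{\sigma_1,\sigma_2\}$ in $\Br_d^2$, I need to find the unique (up to positive scale) solution $(c_F,c_{F'},c_1,\dots,c_{d-1})$ with $c_F,c_{F'}>0$ to the wall equation \eqref{equ:edgeeq}, and then read off the inequality $I_{\{\sigma_1,\sigma_2\}}(\bb)$ from Definition \ref{defn:fanineq}. By Proposition \ref{prop:charbr2}, the spanning rays of $\sigma_i$ are $\ee_{\cT}$ for the non-maximum elements $\cT$ of $\ch_i$, and by Lemma \ref{lem:adjch} the two chains $\ch_1,\ch_2$ share all but one non-maximum element. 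So in both situations the shared rays are indexed by $\ch_1\cap\ch_2$ (minus $\hat 1$), and the two ``exceptional'' rays are $\ee_{\cT_r}$ and $\ee_{\cT'_r}$. The task reduces to writing $\ee_{\cT_r}+\ee_{\cT'_r}$ as an explicit linear combination of the shared rays (this gives the $c$'s) — or rather, finding coefficients so that an appropriate combination vanishes — and checking the sign condition. This is exactly parallel to the proof of Theorem \ref{thm:centralsub}, where \eqref{equ:eesum} played this role.

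First I would handle the diamond situation ($0<r<d$). Here the picture in Figure \ref{fig:2poss} is literally a diamond in $\OO_{d+1}$: $\cT_{r-1}<\cT_r,\cT'_r<\cT_{r+1}$, and by Lemma \ref{lem:localbool} the interval $[\cT(\pi),\hat 1]$ is a Boolean algebra, so this diamond is a genuine Boolean-algebra diamond inside that interval. I claim the key identity is
\[
\ee_{\cT_{r+1}} + \ee_{\cT_{r-1}} = \ee_{\cT_r} + \ee_{\cT'_r},
\]
which I would prove using Lemma \ref{lem:diff}: the difference $\ee_{\cT(\pi,\tau;s-1)}-\ee_{\cT(\pi,\tau;s)}=\ee_{\Gamma^\pi_{\tau^{-1}(s)}}$ depends only on $\tau^{-1}(s)$, and since $\tau_2=(r,r+1)\circ\tau_1$ by \eqref{equ:dia}, the two chains have the same consecutive differences except that the two differences crossing levels $r-1\to r\to r+1$ get swapped. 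Telescoping the differences $\ee_{\cT_{r-1}}-\ee_{\cT_{r+1}}$ along either chain gives the same sum of two $\ee_\Gamma$'s, hence the displayed identity. Moving everything to one side, $\ee_{\cT_r}+\ee_{\cT'_r}-\ee_{\cT_{r+1}}-\ee_{\cT_{r-1}}=0$, and $\ee_{\cT_{r+1}},\ee_{\cT_{r-1}}$ are among the shared rays (with the usual convention that $\ee_{\cT_{r+1}}$ or $\ee_{\cT_{r-1}}$ is $0$ when $r+1=d+1$ or $r-1=-1$, i.e.\ $r-1$ is below the bottom of the chain). So in \eqref{equ:edgeeq} we have $c_F=c_{F'}=1>0$, the coefficient of $\ee_{\cT_{r+1}}$ is $1$ and of $\ee_{\cT_{r-1}}$ is $1$ (all other shared rays have coefficient $0$), and Definition \ref{defn:fanineq} yields exactly \eqref{equ:diaineq}.

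Next, the \ren situation ($r=0$), which I expect to be the main obstacle — here the two chains branch at the very bottom and then, going up, pass through an $\cT_{r+1}=\cT_1$ with a unique $2$-element block (by \eqref{equ:rank1}). The two bottom elements $\cT_0,\cT'_0$ differ by which singleton of $\{s_i,s'_i\}$ comes first, i.e.\ by a transposition $(i,i+1)\circ\pi_1=\pi_2$ with $\tau_1=\tau_2=\tau$ and $\tau(i)=1$ (by \eqref{equ:ren}). I would compute $\ee_{\cT_0}+\ee_{\cT'_0}$ in terms of $\ee_{\cT_1}$ and the higher shared rays. Using \eqref{equ:defneT} directly: $\ee_{\cT_0}$ and $\ee_{\cT'_0}$ agree with $2\ee_{\cT_1}$ except in the coordinates $s_i$ and $s'_i$, where the contributions are $\{i,i+1\}$ in one order for $\cT_0$ and the other order for $\cT'_0$, versus $\{i,i\}$ from $2\ee_{\cT_1}$ (both $s_i,s'_i$ sit in the $i$th block of $\cT_1$). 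Summing, $\ee_{\cT_0}+\ee_{\cT'_0} - 2\ee_{\cT_1}$ has a $+1$ in positions $s_i$ and $s'_i$ and $0$ elsewhere — but I need to express this correction as a combination of the shared rays $\ee_{\cT_2},\ee_{\cT_3},\dots$ together with $\ee_{[d+1]}=0$. The differences $\ee_{\cT_{s-1}}-\ee_{\cT_s}=\ee_{\Gamma^\pi_{\tau^{-1}(s)}}$ for the common chain give the available ``moves'' above $\cT_1$, and I would identify which telescoping combination produces a vector supported on $\{s_i,s'_i\}$: this is where $p=\tau(i-1)$ and $q=\tau(i+1)$ enter, because $\Gamma^\pi_{\tau^{-1}(p)}$ and $\Gamma^\pi_{\tau^{-1}(q)}$ are the sets whose ``boundary'' in $\cT_1$ is exactly at the $i$th block. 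Carefully bookkeeping the indices and the boundary conventions ($p=0$ forces the term $b_{\cT_{p-1}}-b_{\cT_p}$ to be replaced by $b_{[d+1]}=0$; $q=d+1$ forces that term to be eliminated) should convert the identity into: a positive combination on the $\{\ee_{\cT_0},\ee_{\cT'_0}\}$ side with $c_F=c_{F'}=1$, and on the other side coefficient $2$ on $\ee_{\cT_1}$ plus the telescoping pieces indexed by $p$ and $q$. Feeding this into Definition \ref{defn:fanineq} gives \eqref{equ:renineq}. Throughout, positivity of $c_F,c_{F'}$ is automatic (both equal $1$) so Lemma \ref{lem:edgeeq}'s normalization is met, and the boundary/degenerate cases ($p=0$, $q=d+1$, ends of chains, working in $W_d$ so $\ee_{[d+1]}=\1=0$) need to be checked one by one exactly as in the parenthetical remark inside the proof of Theorem \ref{thm:centralsub}.
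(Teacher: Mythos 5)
Your treatment of the diamond situation is correct and is the same as the paper's: both rest on the identity $\ee_{\cT_{r+1}}+\ee_{\cT_{r-1}}=\ee_{\cT_r}+\ee_{\cT'_r}$, obtained from Lemma \ref{lem:diff} and the relation $\tau_2=(r,r+1)\circ\tau_1$.

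In the $r=0$ situation, however, there is a genuine computational error at the key step. You claim that $\ee_{\cT_0}+\ee_{\cT_0'}-2\ee_{\cT_1}$ ``has a $+1$ in positions $s_i$ and $s_i'$ and $0$ elsewhere.'' That is false: computing from \eqref{equ:defneT}, each block $s_j$ with $j>i$ sits in position $j+1$ of $\cT_0$ and $\cT_0'$ but in position $j$ of $\cT_1$, so
\[
\ee_{\cT_0}+\ee_{\cT_0'}-2\ee_{\cT_1}\;=\;\ee_{s_i}+\ee_{s_i'}+2\sum_{j=i+1}^{d}\ee_{s_j},
\]
not $\ee_{s_i}+\ee_{s_i'}$. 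This is not a harmless slip, because the extra $2\sum_{j>i}\ee_{s_j}$ is precisely what produces the term $b_{\cT_{q-1}}-b_{\cT_q}$ with a \emph{plus} sign in \eqref{equ:renineq}: the correct decomposition is the sum of $\ee_{\Gamma^{\pi_1}_{i-1}}=\ee_{s_i}+\ee_{s_i'}+\sum_{j>i}\ee_{s_j}=\ee_{\cT_{p-1}}-\ee_{\cT_p}$ (or $\ee_{[d+1]}$ if $p=0$) and $\ee_{\Gamma^{\pi_1}_{i+1}}=\sum_{j>i}\ee_{s_j}=\ee_{\cT_{q-1}}-\ee_{\cT_q}$ (or $0$ if $q=d+1$). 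If you instead hunt for a telescoping combination equal to $\ee_{s_i}+\ee_{s_i'}$ alone, you are forced into $(\ee_{\cT_{p-1}}-\ee_{\cT_p})-(\ee_{\cT_{q-1}}-\ee_{\cT_q})$, i.e.\ a minus sign on the $q$-term, which is a different inequality from \eqref{equ:renineq}. Beyond this, the rest of your $r=0$ argument is only a plan (``should convert the identity into\dots''), so the decisive computation identifying the two telescoping pieces and their signs is not actually carried out; fixing the support of the correction vector as above and then matching it against Lemma \ref{lem:diff} is exactly what the paper's proof does.
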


 \begin{remark}\label{rem:balance}
	 There are (at least) two ways to see why $b_{[d+1]} =0:$ First, we are discussing the centralized permutohedra which all lie in $V_d$, where each point has its coordinates sum to $0.$ Second, $\ee_{[d+1]} = 0$ in $W_d$ is not a spanning ray of any maximal cone in $\Br_d^2.$

	 The reason we keep $b_{[d+1]}$ in our expression \eqref{equ:renineq} (as well as its later reformulations) is to keep the expression \emph{balanced}, which means if we replace each $b_\cT$ with $\ee_\cT$ in \eqref{equ:renineq}, then the left side gives the same vector as the right side considering both are vectors in $\RR^{d+1}$ (instead of $W_d$). (It will be clear from the proof of Lemma \ref{lem:adjchineq} below why \eqref{equ:renineq} is balanced.) 
 \end{remark}

 \begin{proof}[Proof of Lemma \ref{lem:adjchineq}]
	 For the diamond situation, as it was shown in the proof of Lemma \ref{lem:adjch} that $\tau_1$ and $\tau_2$ satisfy \eqref{equ:tautran}.  Then it follows from the second part of Lemma \ref{lem:diff} that 
 \[\ee_{\cT_{r-1}} - \ee_{\cT_r} = \ee_{\cT(\pi,\tau_1; r-1)} - \ee_{\cT(\pi,\tau_1;r)} =\ee_{\cT(\pi,\tau_2; r)} - \ee_{\cT(\pi,\tau_2;r+1)} = \ee_{\cT'_r} - \ee_{\cT_{r+1}},\] 
which implies that
\[ \ee_{\cT_{r+1}} + \ee_{\cT_{r-1}} = \ee_{\cT_r} + \ee_{\cT'_r}.\]
This gives us the diamond submodular inequality \eqref{equ:diaineq}.

We now consider the \ren  situation. Without loss of generality, we may assume
\begin{equation}
 \cT_0 = s_1|s_2|\cdots|s_{i-1}|s_i| s_i'|s_{i+1}|\cdots|s_d, \quad \text{and} \quad \cT_0' = s_1|s_2|\cdots|s_{i-1}|s_i'| s_i|s_{i+1}|\cdots|s_d.  \label{equ:rank0}
 \end{equation}
 Then
 \begin{equation}
 \ee_{\cT_0} -\ee_{\cT_1} = \ee_{s_i'} + \sum_{j=i+1}^d \ee_{s_j} \quad \text{and} \quad 
 \ee_{\cT_0'} -\ee_{\cT_1} = \ee_{s_i} + \sum_{j=i+1}^d \ee_{s_j}.
	 \label{equ:reneq}
 \end{equation}
 Note that by the second part of Lemma \ref{lem:diff}, we have
 \begin{align*}
	 \ee_{s_i}+\ee_{s_i'} + \sum_{j=i+1}^d \ee_{s_j} =& \begin{cases}
		 \ee_{\cT_{p-1}} - \ee_{\cT_p} \quad & \text{if $p \neq 0$} \\ 
		 \ee_{[d+1]} =\ee_{\hat{1}} \quad & \text{if $p=0$};\end{cases}  \\
	 \text{and} \quad \sum_{j=i+1}^d \ee_{s_j} =& \begin{cases}
		 \ee_{\cT_{q-1}} - \ee_{\cT_q} \quad & \text{if $q \neq d+1$} \\ 
		 0 \quad & \text{if $q=d+1$}.\end{cases}
 \end{align*}
 One sees that the sum of the left hand sides of the above two equalities equals to the sum of the right hand sides of the two equalities in \eqref{equ:reneq}. This gives us an equality involving $\ee_{\cT_0}, \ee_{\cT_0'}, \ee_{\cT_1},$ $\ee_{\cT_{p-1}} - \ee_{\cT_p}$ and $\ee_{\cT_{q-1}} - \ee_{\cT_q}.$ Rearranging terms and applying Definition \ref{defn:fanineq} yields the desired inequality \eqref{equ:renineq}.
 \end{proof}

 We combine results in part (1) of Lemmas \ref{lem:adjch} and \ref{lem:adjchineq} to obtain the following reformulated description for diamond submodular inequalities. The proof is straightforward, so is omitted. 
 \begin{corollary}\label{cor:diamond}
	 Let $(\pi,\tau) \in \fS_{d+1}\times \fS_d$ and $1 \le r < d.$ If we let $\tau' = (r, r+1)\circ \tau,$ then $\ch(\pi,\tau)$ and $\ch(\pi, \tau')$ form a pair of $r$-adjacent maximal chains and their associated diamond submodular inequality can be written as:
	 \begin{equation}
 b_{\cT(\pi,\tau; r-1)} - b_{\cT(\pi,\tau; r)} \le b_{\cT(\pi,\tau'; r)} - b_{\cT(\pi,\tau'; r+1)}.  
		 \label{equ:refdia}
	 \end{equation}
 \end{corollary}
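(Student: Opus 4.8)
The plan is to assemble part~(1) of Lemma~\ref{lem:adjch} and part~(1) of Lemma~\ref{lem:adjchineq}, after a short verification identifying which elements the two maximal chains share. No genuinely new idea is needed; the content is purely bookkeeping on the construction of $\ch(\pi,\tau)$.

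First I would observe that since $\tau' = (r,r+1)\circ\tau$ we have $(\tau')^{-1} = \tau^{-1}\circ(r,r+1)$, so $(\tau')^{-1}(i) = \tau^{-1}(i)$ for all $i\notin\{r,r+1\}$ while $(\tau')^{-1}(r) = \tau^{-1}(r+1)$ and $(\tau')^{-1}(r+1)=\tau^{-1}(r)$; that is, $\tau$ and $\tau'$ satisfy \eqref{equ:tautran}, equivalently \eqref{equ:dia} with $\pi_1=\pi_2=\pi$. By the last assertion of Lemma~\ref{lem:adjch}, it follows that $\ch(\pi,\tau)$ and $\ch(\pi,\tau')$ are $r$-adjacent maximal chains in $\OO_{d+1}$, living in the common maximal interval $[\cT(\pi),\hat{1}]$ and forming a diamond as on the left of Figure~\ref{fig:2poss}.

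Next I would trace the construction of $\ch(\pi,\tau)$ and $\ch(\pi,\tau')$ to match up their elements. The diagrams $\cD(\pi,\tau)$ and $\cD(\pi,\tau')$ differ only by swapping the two bar-labels $r$ and $r+1$, so deleting the bars labelled $1,\dots,r-1$ produces the same diagram in both cases, giving $\cT(\pi,\tau;s) = \cT(\pi,\tau';s)$ for $0\le s\le r-1$; deleting in addition the bars labelled $r$ and $r+1$ removes bars at the same pair of positions $\{\tau^{-1}(r),\tau^{-1}(r+1)\}$ in both cases, giving $\cT(\pi,\tau;s) = \cT(\pi,\tau';s)$ for $r+1\le s\le d$ as well. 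The sole discrepancy is at rank $r$: $\cT(\pi,\tau;r)$ is obtained from the common diagram $\cD(\pi,\tau;r-1)$ by deleting the bar in position $\tau^{-1}(r)$, whereas $\cT(\pi,\tau';r)$ is obtained by deleting the bar in position $\tau^{-1}(r+1)=(\tau')^{-1}(r)$. Hence in the diamond of Figure~\ref{fig:2poss} we may take $\cT_{r-1} = \cT(\pi,\tau;r-1)$, $\cT_r = \cT(\pi,\tau;r)$, $\cT'_r = \cT(\pi,\tau';r)$, and $\cT_{r+1} = \cT(\pi,\tau;r+1) = \cT(\pi,\tau';r+1)$.

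Finally I would apply part~(1) of Lemma~\ref{lem:adjchineq}, which produces for the pair of adjacent maximal cones $\{\sigma(\pi,\tau),\sigma(\pi,\tau')\}$ the diamond submodular inequality $b_{\cT_{r+1}} + b_{\cT_{r-1}} \le b_{\cT_r} + b_{\cT'_r}$; substituting the identifications above, then moving $b_{\cT(\pi,\tau;r)}$ to the left and $b_{\cT(\pi,\tau;r+1)}=b_{\cT(\pi,\tau';r+1)}$ to the right, yields exactly \eqref{equ:refdia}. I expect no real obstacle; the only step requiring care is the middle paragraph, namely confirming that $\ch(\pi,\tau)$ and $\ch(\pi,\tau')$ agree outside rank $r$ so that the four symbols $\cT_{r-1},\cT_r,\cT'_r,\cT_{r+1}$ of Lemma~\ref{lem:adjchineq}(1) can be legitimately rewritten in the $\cT(\pi,\tau;\cdot)$ / $\cT(\pi,\tau';\cdot)$ notation of the statement.
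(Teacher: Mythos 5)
Your proposal is correct and is exactly the argument the paper intends: the authors omit the proof as "straightforward," stating only that one combines part (1) of Lemma \ref{lem:adjch} with part (1) of Lemma \ref{lem:adjchineq}, which is what you do. The bookkeeping in your middle paragraph (that the two chains agree outside rank $r$, so $\cT_{r+1}=\cT(\pi,\tau;r+1)=\cT(\pi,\tau';r+1)$) is the only nontrivial check, and you carry it out correctly.
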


 While the diamond submodular inequalities are in a simple form that is easy to describe, the inequalities arising from the \ren situation are relative messy. Fortunately, given the diamond submodular inequalities, in particular their reformulations given in Corollary \ref{cor:diamond}, we only need to consider a subset of the ones from the \ren situation. In fact, for each rank-$1$ $\cT_1$ element of $\OO_{d+1},$ we only need one inequality constructed from a \ren shape containing $\cT_1.$  

 \begin{lemma} \label{lem:ess}
	Let $\cT = \cT_1 = s_1|s_2|\cdots|s_{i-1}|s_i s_i'|s_{i+1}|\cdots|s_d$ be an element $\OO_{d+1}$ of rank $1.$ It covers exactly two rank-$0$ elements $\cT_0$ and $\cT_0'$ as given in \eqref{equ:rank0}.
We also let
\[ \cS := s_1 s_2 \cdots s_{i-1} | s_i s_i' | s_{i+1}\cdots s_d \]
be the maximum element that is above $\cT$ and still contains $\{s_i,s_i'\}$ as a single block. Note that $\cS$ is of rank $d-1$ if $i=1$ or $d,$ and is of rank $d-2$ otherwise.

Assume all the hypothesis in Lemma \ref{lem:adjchineq}, including those assumption in part (ii) for the \ren situation. 
Moreover, assume further the common rank-$1$ element of $\ch_1$ and $\ch_2$ is the $\cT_1$ given above. Clearly, $\cT_0, \cT_0'$ are the two rank-$0$ elements. 
\begin{enumerate}[label=(\alph*)]
	\item 
If $S$ is a common element of $\ch_1$ and $\ch_2,$ then the associated \ren inequality $I_{\{\sigma_1, \sigma_2\}}(\bb)$ becomes: 
\begin{equation}\label{equ:essineq}
	2 b_\cT + b_\cS \le b_{\cT_0} + b_{\cT_0'} + \underbrace{b_{[d+1]}}_{\text{eliminated if $i=1$}}.
\end{equation}
\item If we do not assume $\cS$ is a common element of $\ch_1$ and $\ch_2,$ then the associated \ren inequality $I_{\{\sigma_1, \sigma_2\}}(\bb)$ can be deduced from the inequality \eqref{equ:essineq} and all the diamond submodular inequalities.
\end{enumerate}
\end{lemma}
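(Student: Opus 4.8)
The plan is to analyze both parts by first fixing the combinatorial picture of the \ren shape with prescribed common rank-$1$ element $\cT = \cT_1$ and prescribed rank-$0$ elements $\cT_0, \cT_0'$ as in \eqref{equ:rank0}, and then identifying exactly which rank-$(\ge 2)$ elements $\cT_2, \cT_3, \dots$ lie on the common tail of $\ch_1$ and $\ch_2$. The key observation is that, since $\cT$ has a unique $2$-element block $\{s_i, s_i'\}$ in position $i$, any element above $\cT$ in $\OO_{d+1}$ either keeps $s_i$ and $s_i'$ in the same block (these form the interval $[\cT, \cS]$) or splits them apart; the first element of the common tail above $\cT$ that splits them is forced to be a specific rank obtained by merging other bars first. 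For part (a), the hypothesis that $\cS$ is a common element of $\ch_1$ and $\ch_2$ pins down the rank-$(r+1)$ element immediately above $\cT$ along the tail to be $\cS$ (when $i \ne 1, d$, $\cS$ has rank $d-2$, so $\cS = \cT_{r+1} = \cT_2$; when $i = 1$ or $d$, $\cS$ has rank $d-1$, so $\cS$ is the top non-maximum element). I then substitute into the general \ren inequality \eqref{equ:renineq}. The indices $p = \tau(i-1)$ and $q = \tau(i+1)$ in \eqref{equ:renineq} refer to positions in the chain, and because $\cS$ is the element obtained from $\cT$ by removing all bars except the one at position $i$, the elements $\cT_{p-1}, \cT_p, \cT_{q-1}, \cT_q$ all lie inside $[\cT, \cS]$ and the telescoping $\bigl(b_{\cT_{p-1}} - b_{\cT_p}\bigr) + \bigl(b_{\cT_{q-1}} - b_{\cT_q}\bigr)$ collapses to $b_\cS - b_{[d+1]}$ (with the $b_{[d+1]}$ term present iff $q = d+1$ fails, and the first bracket replaced by $b_{[d+1]}$ iff $p = 0$, i.e. iff $i = 1$). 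Tracking the two boundary cases $i = 1$ (where $p = 0$) and $i = d$ (where $q = d+1$) carefully against the annotations in \eqref{equ:renineq} yields exactly \eqref{equ:essineq}, with the $b_{[d+1]}$ term eliminated precisely when $i = 1$. Here I should double-check the balanced-vector bookkeeping of Remark \ref{rem:balance}: replacing each $b_\cT$ by $\ee_\cT$ in \eqref{equ:essineq} should give $2\ee_\cT + \ee_\cS$ on the left equal to $\ee_{\cT_0} + \ee_{\cT_0'} + \ee_{[d+1]}$ on the right in $\RR^{d+1}$, which follows from \eqref{equ:reneq} together with $\ee_{s_i} + \ee_{s_i'} + \sum_{j > i} \ee_{s_j} = \ee_\cS - \ee_{[d+1]}$ (reading off \eqref{equ:defneT} for $\cS$), and this sanity check essentially reproves part (a).

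For part (b), the idea is that the \ren inequality $I_{\{\sigma_1,\sigma_2\}}(\bb)$ for an \emph{arbitrary} \ren shape with common rank-$1$ element $\cT$ can be transported to the ``canonical'' one (the one whose tail passes through $\cS$) at the cost of diamond submodular inequalities. Concretely, given any \ren shape $\{\ch_1, \ch_2\}$ through $\cT$, both $\ch_1$ and $\ch_2$ agree on a tail above $\cT$; I want to compare its \ren inequality \eqref{equ:renineq} with \eqref{equ:essineq}. The difference of the two left-hand sides is a combination of terms of the form $b_{\cT'} - b_{\cT''}$ where $\cT', \cT''$ are consecutive along two different maximal chains in the interval $[\cT, \hat 1]$; since that interval is Boolean (Lemma \ref{lem:localbool}), any two maximal chains in it are connected by a sequence of diamond moves, and each diamond move changes the relevant telescoping sum by exactly a diamond submodular inequality of the form \eqref{equ:refdia} (via Corollary \ref{cor:diamond}). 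Summing these contributions expresses the general \ren inequality as \eqref{equ:essineq} plus a nonnegative combination of diamond submodular inequalities, which is what ``can be deduced from'' means. The cleanest way to organize this is: (i) show the \ren inequality for a shape through $\cT$ depends, modulo diamond inequalities, only on $\cT$ and the pair $\{\cT_0, \cT_0'\}$ and not on the choice of tail; (ii) evaluate it for the canonical tail through $\cS$.

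I expect the main obstacle to be part (b): making precise and bookkeeping-clean the statement ``two maximal chains in the Boolean interval $[\cT, \hat 1]$ differ by a sequence of diamond moves, and each move contributes a diamond submodular inequality to the telescoping sum.'' One has to be careful that the intermediate chains one passes through are genuinely maximal chains of $\OO_{d+1}$ (not just of the interval), that the resulting diamond inequalities are among those we are allowed to use, and that signs work out so that we get a \emph{nonnegative} combination (so that the deduction is a valid implication between inequalities, not merely an identity between affine forms). A secondary but routine subtlety is the boundary handling in the definitions of $p, q$ in \eqref{equ:renineq} when $i \in \{1, d\}$, which must be matched exactly against the ``eliminated/replaced'' annotations there and in \eqref{equ:essineq}; I would treat $i = 1$, $1 < i < d$, and $i = d$ as three cases and verify each against the balanced-vector identity as a check.
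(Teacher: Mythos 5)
Your overall strategy is the paper's: in (a) the two difference terms of \eqref{equ:renineq} telescope to $b_\cS - b_{[d+1]}$ once $\cS$ lies on the common tail, and in (b) an arbitrary \ren inequality is transported to the essential one by a sequence of diamond moves inside the Boolean interval above $\cT$, each move contributing an instance of \eqref{equ:refdia}. The paper carries out (b) exactly as you propose, by setting $\gamma_0=\tau$, $\gamma_i=(q+i,q+i-1)\circ\gamma_{i-1}$, invoking Corollary \ref{cor:diamond} for each consecutive pair, and summing the resulting telescoping inequalities; the sign/nonnegativity bookkeeping you flag as the main obstacle does go through, since each step adds a single diamond inequality with coefficient $1$. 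So the plan is sound and matches the source.

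That said, several intermediate assertions in your part (a) are wrong as stated, even though your conclusion is right. First, $\cS$ is obtained from $\cT$ by deleting all bars \emph{except those at positions $i-1$ and $i+1$} (recall $\cT$ has no bar at position $i$ at all), so $\cS=\cT_{d-2}$ when $1<i<d$ (or $\cT_{d-1}$ when $i\in\{1,d\}$) --- not $\cT_2$, and not the element immediately above $\cT$ on the tail. Second, the elements $\cT_{p-1},\cT_p,\cT_{q-1},\cT_q$ lie in $[\cS,\hat 1]$, not in $[\cT,\cS]$: the hypothesis that $\cS$ is common forces the bars at positions $i\pm 1$ to be the last ones removed, i.e.\ $\{p,q\}=\{d-1,d\}$ (with the obvious degenerations $p=0$ when $i=1$ and $q=d+1$ when $i=d$), and it is precisely this that makes $(b_{\cT_{p-1}}-b_{\cT_p})+(b_{\cT_{q-1}}-b_{\cT_q})$ genuinely collapse to $b_\cS-b_{[d+1]}$. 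Third, your auxiliary identity should read $\ee_{s_i}+\ee_{s_i'}+2\sum_{j>i}\ee_{s_j}=\ee_\cS-\ee_{[d+1]}$ for $i>1$ (you dropped the factor $2$ on the tail sum), and $\ee_\cS=\ee_{s_1}+\ee_{s_1'}+2\sum_{j\ge 2}\ee_{s_j}$ for $i=1$, which is exactly why the $b_{[d+1]}$ term disappears in that case. With these corrections, your ``sanity check'' --- that $2\ee_\cT+\ee_\cS=\ee_{\cT_0}+\ee_{\cT_0'}$ holds in $W_d$ and is therefore, by the uniqueness in Lemma \ref{lem:edgeeq}, \emph{the} wall relation whenever $\cS$ is among the spanning rays --- is in fact the cleanest complete proof of part (a).
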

We remark that the term $b_{[d+1]}$ in \eqref{equ:essineq} can be removed even when $i \neq 1$ as we have the assumption $b_{[d+1]}=0$. However, as we stated in Remark \ref{rem:balance}, we keep this term to make our inequality balanced.

\begin{proof}
	Both parts of the lemma can be verified in three cases: (i) When $i=1$, and $\cS$ is of rank $d-1;$ (ii) when $i=d,$ and $\cS$ is of rank $d-1;$ (iii) when $i\neq 1$ or $d$, and $\cS$ is of rank $d-2.$ The proofs for all cases are similar. Therefore, we only present the one for case (i) where $i=1.$

	\begin{enumerate}[label=(\alph*)]
		\item 
	Since $i=1,$ we immediately have that $p = \tau(i-1)=0.$ The element $\cS = s_1 s_1'|s_2 s_3 \cdots s_d$ is of rank $d-1,$ and so is the second element $\cT_{d-1}$ from the top on the maximal chain $\ch_1=\ch(\pi_1,\tau_1)=\ch(\pi_1,\tau).$ By the construction of $\ch(\pi_1,\tau),$ the top element $\cT_d = \hat{1}=[d+1]$ in $\ch_1$ was obtained from $\cS$ by removing the bar at the $2$nd position, which means $q = \tau(i+1)=\tau(2)=d$. Plugging this information into \eqref{equ:renineq}, we obtain \eqref{equ:essineq}.
	
\item We may assume $d \ge 2$ since if $d=1,$ there is only one inequality arising from the \ren situation (and no diamond submodular inequalities). 
	As in the previous part, we have $p=\tau^{-1}(i-1)=0.$ Since $d \ge 2,$ we have $q = \tau(i+1) = \tau(2) \neq d+1.$ Hence, the associated inequality is
	\[ 2b_{\cT_1} + b_{[d+1]} + \left(b_{\cT_{q-1}}-b_{\cT_{q}}\right) \le b_{\cT_0} + b_{\cT_0'}. \]
	Comparing it with \eqref{equ:essineq}, we see that it is enough to show the following inequality can be deduced from all the diamond submodular inequalities:
	\begin{equation}\label{equ:mdia} b_{\cT_{q-1}}-b_{\cT_{q}} \le b_\cS - b_{[d+1]}.
	\end{equation}
	Clearly, if $q=d,$ the equality holds. So we assume $q < d.$ Let $\gamma_0 = \tau,$ and for each $1 \le i \le d-q,$ let $\gamma_i = (q+i, q+i-1) \circ \gamma_{i-1}.$ 
Then $\gamma_i(2) = q+i.$ In particular, $\gamma_{d-q}(2) = d.$
Therefore, we have	
\[ \cT_{q-1} = \cT(\pi_1,\gamma_0; q-1), \cT_{q} = \cT(\pi_1,\gamma_0; q), \quad \text{and} \quad \cS = \cT(\pi_1, \gamma_{d-q}; d-1), [d+1] = \cT(\pi_1,\gamma_{d-q}; d).\]
	
	We apply Corollary \ref{cor:diamond} ($d-q$) times with $\pi = \pi_1$ and $(\tau,\tau') = (\gamma_{i-1}, \gamma_i)$ for all $1 \le i \le d-q,$ and obtain $d-q$ diamond submodular inequalities in the form of \eqref{equ:refdia}.
	Adding these inequalities together yields the desired inequality \eqref{equ:mdia}.
\end{enumerate}
\end{proof}

We see that the inequality \eqref{equ:essineq} only depends on $\cT = \cT_1$, as $\cT_0, \cT_0'$ and $\cS$ are all determined by $\cT.$ Hence, we denote the inequality \eqref{equ:essineq} by $I_{\cT}(\bb)$, and call it the \emph{essential inequality} associated with the rank-$1$ element $\cT=\cT_1.$

\begin{example}\label{ex:ess}
	In Figure \ref{fig:renex}, we give an example of the proof of part (b) of Lemma \ref{lem:ess}, showing how to use diamond submodular inequalities to reduce an arbitrary \ren inequality to an essential one. 
We start with the pair of adjacent chains given by the circled elements, $\ch_1= (\cT_0,\cT_1,\cT_2,\cT_3,\cT_4)$ and $\ch_2=(\cT_0',\cT_1,\cT_2,\cT_3,\cT_4)$.  Their associated \ren inequality is 
\begin{equation}\label{equ:exineq}
	2b_{\cT_1}+ b_{[5]} + (b_{\cT_1} - b_{\cT_2})  \leq b_{\cT_0}+b_{\cT_0'}.
\end{equation}
However, using the two diamonds shown in Figure \ref{fig:renex}, we obtain that 
\[ b_{\cT_1} - b_{\cT_2}\le b_{\cT_2'} - b_{\cT_3}\leq b_{\cT_3'}-b_{\cT_4} = b_{53|142} - b_{[5]}.\]
This shows that the \ren inequality \eqref{equ:exineq} can be deduced from the above two diamond submodular inequalities and the following inequality:
\[ 	2b_{\cT_1}+ b_{[5]} + (b_{53|142} - b_{[5]}) = 2b_{\cT_1} + b_{53|142}  \leq b_{\cT_0}+b_{\cT_0'},\]
which is exactly the essential \ren inequality associated to $\cT_1.$
\begin{figure}[h]
\centering
\includegraphics[scale=0.8]{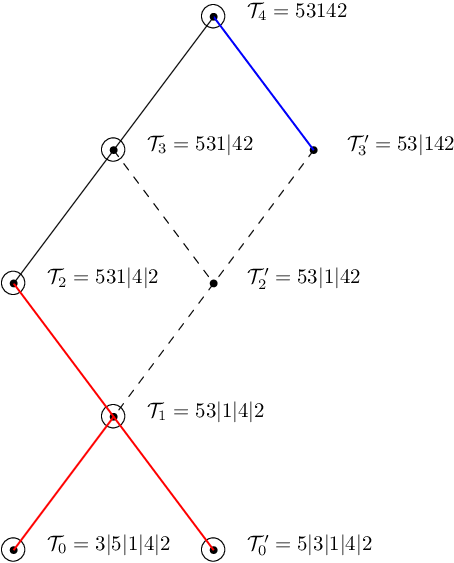}
\caption{Illustration of Lemma \ref{lem:ess}}
\label{fig:renex}
\end{figure}
\end{example}

All the discussion above, together with Proposition \ref{prop:reduxfan} and Remark \ref{rem:equivsub}, gives us the first main result of this part. 
Recall that $\OO_{d+1}$ is locally boolean (Lemma \ref{lem:localbool}), so we can define $\wedge$ and $\vee$ on any pair of elements in an interval. 
\begin{theorem}\label{thm:centralsub2}
The deformation cone of the nested Braid fan (or centralized nested regular permutohedron) is the collection of $\bb \in \RR^{\overline{\OO_{d+1}}}$ satisfying the following conditions: 
\begin{enumerate}
	\item (Local submodularity) All the diamond submodular inequalities on $\OO_{d+1}$ are satisfied, or equivalently, for any maximal interval $[\cT(\pi), \hat{1}]$,
\begin{equation}\label{equ:bsub2}
	b_{\cS \vee \cT} + b_{\cS \wedge \cT} \le b_{\cS} + b_{\cT}, \quad \forall \cS, \cT \in [\cT(\pi),\hat{1}].
\end{equation}
\item (  \ren condition) For any rank-$1$ element $\cT \in \overline{\OO_{d+1}},$  its associated essential inequality $I_{\cT}(\bb)$ holds. 
\end{enumerate}
For both parts, we assume $b_{\hat{1}} = b_{[d+1]} = 0.$
\end{theorem}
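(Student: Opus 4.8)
The plan is to assemble the machinery already in place rather than to do anything new from scratch. By Proposition \ref{prop:fanofusual2} the nested Braid fan $\Br_d^2$ is a complete projective fan in $W_d$ with $0 \in \Br_d^2$ (each maximal cone $\sigma(\pi,\tau)$ is cut out by $d$ independent inequalities in the $d$-dimensional space $W_d$, hence pointed), and by Proposition \ref{prop:charbr2} it is simplicial; so Proposition \ref{prop:reduxfan} applies and tells us that $\bb \in \Def(\Br_d^2)$ if and only if $I_{\{\sigma,\sigma'\}}(\bb)$ holds for every pair $\{\sigma,\sigma'\}$ of adjacent maximal cones of $\Br_d^2$. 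First I would use Proposition \ref{prop:charbr2} once more to reindex: maximal cones of $\Br_d^2$ correspond to maximal chains of $\overline{\OO_{d+1}}$, and two maximal cones are adjacent precisely when the two maximal chains differ in exactly one, necessarily non-maximum, element — that is, they form an $r$-adjacent pair for some $0 \le r < d$ in the sense fixed just before Lemma \ref{lem:adjch}. So the whole of $\Def(\Br_d^2)$ is cut out by the inequalities $I_{\{\sigma_1,\sigma_2\}}(\bb)$ attached to $r$-adjacent pairs of maximal chains, together with the convention $b_{\hat 1}=b_{[d+1]}=0$ (forced because $\ee_{[d+1]}=0$ in $W_d$ is not a ray of any maximal cone; cf. Remark \ref{rem:balance}).

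Next I would split into the two cases furnished by Lemma \ref{lem:adjch}. In the diamond case $0<r<d$, Lemma \ref{lem:adjchineq}(1) identifies $I_{\{\sigma_1,\sigma_2\}}(\bb)$ with the diamond submodular inequality \eqref{equ:diaineq}; moreover, by the proof of Lemma \ref{lem:adjch} the two chains share the minimum element $\cT(\pi)$, so the diamond lies inside the maximal interval $[\cT(\pi),\hat 1]$, and conversely every diamond inside such an interval arises from an $r$-adjacent pair of maximal chains. Hence, ranging over all diamond pairs yields exactly the collection of all diamond submodular inequalities inside all maximal intervals of $\OO_{d+1}$, i.e. exactly the inequalities named in part (1) of the theorem (before the ``equivalently''). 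In the \ren case $r=0$, Lemma \ref{lem:adjchineq}(2) gives the \ren inequality \eqref{equ:renineq}; but Lemma \ref{lem:ess}(b) shows that each such inequality is implied by the essential inequality $I_{\cT_1}(\bb)$ for the common rank-$1$ element $\cT_1$ of the pair together with all the diamond submodular inequalities, and Lemma \ref{lem:ess}(a) shows conversely that each essential inequality $I_\cT(\bb)$ is itself a \ren inequality (the one for a \ren pair in which the element $\cS$ attached to $\cT$ is common). Therefore the full list of defining inequalities from Proposition \ref{prop:reduxfan} is equivalent to: all diamond submodular inequalities in all maximal intervals, plus all essential inequalities $I_\cT(\bb)$ indexed by rank-$1$ elements $\cT$ — which is exactly conditions (1) and (2) of the theorem, with (2) read through the definition of the essential inequality preceding Example \ref{ex:ess}.

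It then remains to see that the family of all diamond submodular inequalities inside maximal intervals is equivalent to the displayed local submodularity \eqref{equ:bsub2}. By Lemma \ref{lem:localbool} each maximal interval $[\cT(\pi),\hat 1]$ is isomorphic to the Boolean algebra $\BB_d$, and under this isomorphism the ``diamonds'' of the interval are precisely the diamonds of $\BB_d$; by Remark \ref{rem:equivsub} (the standard fact that on a Boolean algebra the diamond submodular condition is equivalent to full submodularity) these diamond inequalities are equivalent to $b_{\cS\vee\cT}+b_{\cS\wedge\cT}\le b_\cS+b_\cT$ for all $\cS,\cT$ in the interval, where $\vee,\wedge$ are the operations of that interval. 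Since these are exactly the meet and join appearing in \eqref{equ:bsub2} (well-defined because $\OO_{d+1}$ is locally Boolean), this gives condition (1) as stated, and the theorem follows. The part that needs genuine care here is purely bookkeeping rather than a conceptual obstacle: one must check that the two reformulations really cut out the same cone, i.e. that dropping every non-essential \ren inequality loses nothing (this is Lemma \ref{lem:ess}(b), used in the direction that the kept inequalities imply the dropped ones) and that restricting attention to diamonds inside maximal intervals misses no diamond submodular inequality of $\OO_{d+1}$. Verifying these closure/coverage statements is the only load-bearing step at the level of the theorem itself; all the real work has already been absorbed into Lemmas \ref{lem:adjch}, \ref{lem:adjchineq}, and especially \ref{lem:ess}.
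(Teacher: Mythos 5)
Your proposal is correct and takes essentially the same route as the paper, whose proof of this theorem is just the one-line remark that it follows from ``all the discussion above'' together with Proposition~\ref{prop:reduxfan} and Remark~\ref{rem:equivsub}; you have simply written out explicitly the assembly of Propositions~\ref{prop:fanofusual2}, \ref{prop:charbr2}, \ref{prop:reduxfan} and Lemmas~\ref{lem:adjch}, \ref{lem:adjchineq}, \ref{lem:ess} that the paper leaves implicit. The bookkeeping points you flag at the end (that no diamond or \ren{} inequality is lost in the reformulation) are exactly the content absorbed into those lemmas, so nothing further is needed.
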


If we remove the condition $b_{[d+1]}=0$ which corresponds to the centralized cases, we get a theorem that characterize all generalized nested permutohedra, analogous to Theorem \ref{thm:submodrestate}.
\begin{theorem}\label{thm:sub2}
	For $\bb \in \RR^{\OO_{d+1}}$ satisfying the local submodularity condition and the \ren condition described in Theorem \ref{thm:centralsub2}, the linear system:
	\begin{equation} \label{equ:linear3}
		\langle \ee_{[d+1]}, \xx \rangle = \langle \1, \xx \rangle \ = \ b_{[d+1]}, \quad \text{and} \quad 
		\langle \ee_\cT, \xx \rangle \ \le \ b_\cT, \quad \forall \cT \in \overline{\OO_{d+1}} 
	\end{equation}
	defines a generalized nested permutohedron in $\RR^{d+1},$ and any generalized nested permutohedron arises this way uniquely.

		Furthermore, if a polytope $P \in \RR^{d+1}$ is defined by a tight representation \eqref{equ:linear3}, then $P$ is a generalized nested permutohedron if and only if $\bb \in \RR^{\OO_{d+1}}$ satisfies the local submodularity condition and the \ren condition. 
\end{theorem}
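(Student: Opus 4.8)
The plan is to follow the proof of Theorem~\ref{thm:submodrestate} almost verbatim, with $\BB_{d+1}$ and submodularity replaced by $\OO_{d+1}$ and the (local submodularity $+$ \ren) conditions. First I would dispose of the centralized case. Combining Theorem~\ref{thm:centralsub2} (which identifies $\Def(\Br_d^2)$ with the set of $\bb \in \RR^{\overline{\OO_{d+1}}}$ satisfying local submodularity and the \ren condition, subject to $b_{[d+1]}=0$), Proposition~\ref{prop:fanofusual2} (so that $\Br_d^2$ is the normal fan of every regular nested permutohedron), Proposition~\ref{prop:charbr2} (so that the facet normals of such a polytope are exactly the rays $\ee_\cT$, $\cT \in \overline{\OO_{d+1}}$), and Definition~\ref{defn:gen2}, one obtains a bijection between centralized generalized nested permutohedra in $V_d$ and vectors $\bb \in \RR^{\OO_{d+1}}$ that satisfy local submodularity, the \ren condition, and $b_{[d+1]}=0$; here $\bb$ is recovered from the polytope $P$ by $b_\cT = \max_{\xx \in P} \langle \ee_\cT, \xx\rangle$.

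Next I would remove the normalization $b_{[d+1]}=0$ by a translation, exactly as in the second paragraph of the proof of Theorem~\ref{thm:submodrestate}. For $\cT=(S_1,\dots,S_k) \in \OO_{d+1}$ set $w(\cT) := \langle \ee_\cT, \1\rangle = \sum_i i\,|S_i|$, note $w([d+1])=d+1$, put $c := b_{[d+1]}/(d+1)$, and define $\bb'$ by $b'_\cT := b_\cT - c\,w(\cT)$. Then $b'_{[d+1]}=0$; the polytope $Q$ cut out by \eqref{equ:linear3} with vector $\bb'$ equals the centralized polytope $\widetilde{P}=P-c\1$, because $\langle \ee_\cT, \xx - c\1\rangle = \langle \ee_\cT, \xx\rangle - c\,w(\cT)$ and $\langle \1, \xx - c\1\rangle = b_{[d+1]}-c(d+1)=0$; and $\bb'$ satisfies local submodularity and the \ren condition if and only if $\bb$ does. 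This last equivalence is where balancedness (Remark~\ref{rem:balance}) does the work: every diamond submodular inequality and every essential \ren inequality is balanced, i.e.\ substituting $\ee_\cT$ for each $b_\cT$ yields the same vector of $\RR^{d+1}$ on both sides (for the diamond inequalities this is the identity $\ee_{\cT_{r+1}}+\ee_{\cT_{r-1}}=\ee_{\cT_r}+\ee_{\cT'_r}$ from the proof of Lemma~\ref{lem:adjchineq}); applying the functional $\langle\,\cdot\,,\1\rangle$ shows the two sides carry equal $w$-weight, so subtracting $c$ times that common weight leaves the inequality unchanged. Since being a generalized nested permutohedron depends only on the normal fan and is therefore translation invariant, $P$ is a generalized nested permutohedron iff $Q$ is, and together with the centralized case this gives the claimed bijection for arbitrary $\bb$.

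The tight-representation statement then follows from Lemma~\ref{lem:checkdeform} plus the same bookkeeping: translation preserves tightness of each inequality, so if \eqref{equ:linear3} is a tight representation for $P$ then $\A\xx \le \bb'$ is a tight representation for $Q=\widetilde{P}$; taking $P_0$ to be a centralized regular nested permutohedron (whose facet normals are precisely the $\ee_\cT$, $\cT\in\overline{\OO_{d+1}}$), Lemma~\ref{lem:checkdeform} says $Q$ is a generalized nested permutohedron iff $\bb'\in\Def(\Br_d^2)$, which by Theorem~\ref{thm:centralsub2} holds iff $\bb'$, equivalently $\bb$, satisfies local submodularity and the \ren condition. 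I expect the only delicate point to be the status of the \ren condition off the centralized slice: the essential inequalities in Theorem~\ref{thm:centralsub2} are recorded under the standing assumption $b_{[d+1]}=0$, so one must first rewrite them in the balanced form~\eqref{equ:essineq} (keeping the $b_{[d+1]}$ term, which is eliminated only when $i=1$) and check that it is this balanced form that is preserved under $\bb\mapsto\bb'$. Everything else parallels Section~\ref{sec:GP} and is routine.
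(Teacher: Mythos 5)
Your proposal is correct and follows essentially the same route as the paper: establish the centralized case from Theorem \ref{thm:centralsub2} together with Propositions \ref{prop:fanofusual2} and \ref{prop:charbr2}, then reduce the general case by the translation $b'_\cT = b_\cT - \frac{b_{[d+1]}}{d+1}\langle \ee_\cT,\1\rangle$, using balancedness (Remark \ref{rem:balance}) to see that the defining inequalities are preserved, and finish the tightness claim with Lemma \ref{lem:checkdeform}. You in fact spell out the balancedness verification and the status of the \ren inequalities off the slice $b_{[d+1]}=0$ in more detail than the paper's sketch does.
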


\begin{proof}
	The proof is similar to that of Theorem \ref{thm:submodrestate}, and we only give a sketch of the proof for the first part. 
	The one-to-one correspondence between centralized nested permutohedra and $\bb$'s satisfying the local submodularity condition and the \ren condition with $b_{[d+1]}=0$ is established by Theorem \ref{thm:centralsub2}. 

	Suppose $\bb \in \RR^{\OO_{d+1}}$. Let $k = \frac{b_{[d+1]}}{d+1}$ and define a new vector/function $\bb' \in \RR^{\OO_{d+1}}$ by
\[ \bb'_\cT = \bb_\cT - k \cdot \card(\cT), \quad \forall \cT \in \OO_{d+1},\]
where $\card(\cT) = \left\langle \ee_{\cT}, \1 \right\rangle = \sum_{i} i |S_i|,$ if $\cT = (S_1,S_2 ,\cdots ,S_k)$.

	Let $P$ and $Q$ be the polytopes defined by the linear system \eqref{equ:linear3} with vectors $\bb$ and $\bb'$ respectively. Then we have the following facts: 
\begin{enumerate}
	\item $\bb'_{[d+1]} =0.$
	\item $\bb'$ satisfies the local submodularity condtion and the \ren condition if and only if $\bb$ satisfies these two conditions as well. 
	\item $Q =\tilde{P} = P - k \1$ is the centralized version of $P.$
\end{enumerate}
Facts (1) and (3) are straightforward to check, and fact (2) follows from that all the inequalities we describe are balanced. (See Remark \ref{rem:balance}.) We see the first conclusion of the theorem follows from these facts and the arguments in the first paragraph.
\end{proof}

\begin{example}\label{ex:unexampleII}

	Recall that the polytope $P \subset \RR^4$ considered in Example \ref{ex:unexample}. We already mentioned that $P$ is the cube whose vertices are $(1,1,1,3)$ and $(0,2,2,2)$ and their permutations. Furthermore, by discussing edge directions, we conclude that $P$ is not a generalized permutohedron. 
	Another way to see this is by looking at the normal cones of $P$ at each vertex. For example, let $\sigma$ be the normal cone of $P$ at the vertex $(0,2,2,2).$ One can show that $\sigma$ is spanned by $3$ rays in $\Br_3$: $\ee_{\{2,3\}}, \ee_{\{3,4\}}, \ee_{\{2,4\}}.$ However, another ray $\ee_{ \{2,3,4\}}$ of $\Br_3$ is in the middle of $\sigma.$ As a result, $\sigma$ cuts through $6$ maximal cones of $\Br_3$, and thus is not a union of maximal cones in $\Br_3.$ Figure \ref{fig:nonexample}/(A) depicts a slice of these $6$ cones where the shaped region corresponds to the normal cone $\sigma.$ Hence, the normal fan of $P$ does not refine $\Br_3,$ and by Proposition \ref{prop:coarser}, $P$ is not a generalized permutohedron.

\begin{figure}[h]
\centering
\begin{subfigure}{.5\textwidth}
  \centering
  \includegraphics[width=.6\linewidth]{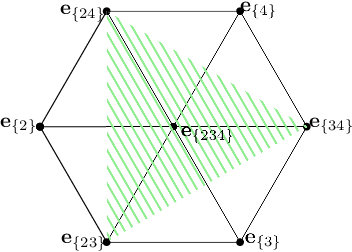}
  \caption{$\sigma$ in the Braid fan $\Br_3$}
  \label{fig:sub1}
\end{subfigure}%
\begin{subfigure}{.5\textwidth}
  \centering
  \includegraphics[width=.6\linewidth]{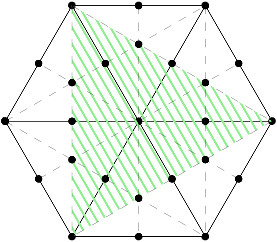}
  \caption{$\sigma$ in the nested Braid fan $\Br_3^2$}
  \label{fig:sub2}
\end{subfigure}
\caption{Comparison of a normal cone $\sigma$ in $\Br_3$ and $\Br_3^2$}
\label{fig:nonexample}
\end{figure}
However, in $\Br_3^2,$ each maximal cone of $\Br_3$ was subdivided into $6$ cones. Figure \ref{fig:nonexample}/(B) shows how the maximal cones in Figure \ref{fig:nonexample}/(A) were subdivided, where the dark dots are rays in $\Br_3^2.$ One sees that $\sigma$ is a union of maximal cones in $\Br_3^2.$ Similarly, all the other normal cones of $P$ are unions of maximal cones in $\Br_3^2.$ Hence the normal fan of $P$ refines $\Br_3^2$. Thus, $P$ is a generalized \emph{nested} permutohedron. 
\end{example}

\section{Chiseling Constructions} \label{sec:chisel}
Victor Reiner asked whether it is true that $\Br_d^2$ is the barycentric subdivision of $\Br_d.$ The main purpose of this section is to give an affirmative answer to his question. 
We start by introducing the concept of \emph{chiseling} off faces of a polytope, which will be used to construct the \emph{barycentric subdivision}. 
%
\begin{definition}\label{defn:chisel}
Suppose $G$ is a face of a $d$-dimensional polytope $P\subset V.$ Let $F_1,\cdots F_l$ be the facets containing $G$ with primitive outer normals $\bfa_1,\cdots, \bfa_l$ respectively; in other words, $\bfa_1,\cdots, \bfa_l$ are the spanning rays of the normal cone $\ncone(G,P)$. Define the \emph{chiseling direction of $P$ at $G$} to be 
\[ \bfa_G :=\bfa_1+\cdots+ \bfa_l.\] Furthermore, let $b_G$ be the scalar such that
$G = P \cap \{ \xx \in V \ : \ \langle \bfa_G, \xx \rangle = b_G \}$; equivalently, 
\[ b_G := \max_{\xx \in P} \langle \bfa_G, \xx \rangle.\]
For any sufficiently small $\epsilon>0$ such that $\{\xx: \langle \bfa_G,\xx\rangle < b_G-\epsilon\}$ contains all vertices of $P$ not in $G$, we define $P_\epsilon:=P\cap \{\xx \ : \ \langle \bfa_G,\xx\rangle \leq b_G-\epsilon\}$ to be the polytope obtained by \emph{chiseling $G$ off $P$ (at distance $\epsilon$)}.  
We call the facet $P \cap \{\xx: \langle \bfa_G,\xx\rangle = b_G-\epsilon\}$ of $P_\epsilon$ created by this process the \emph{facet obtained by chiseling $G$ off $P$}.

Let $G_1, \dots, G_k$ be faces of $P.$ We say $G_1, \dots, G_k$ can be \emph{simultaneously chiseled off $P$ at distance $\epsilon$} if for any $1 \le i < j \le k$ the facet obtained by chiseling $G_i$ off $P$ at distance $\epsilon$ has no intersection with the facet obtained by chiseling $G_j$ off $P$ at distance $\epsilon.$ 
\end{definition}

\begin{remark}
We remark that the term ``chiseling'' follows from \cite[Section 6]{bruns}. Other terms such as ``shaving'' and ``truncating'' with the same meaning are also used in the literature.
\end{remark}

See Figure \ref{fig:chisel} for a picture of chiseling off a vertex from a polygon. 
\begin{figure}[h]
\centering
\includegraphics{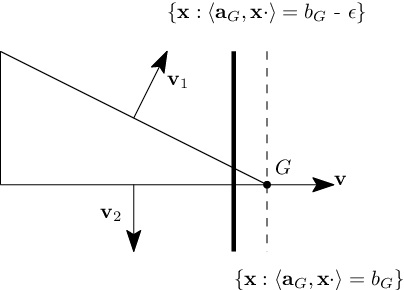}
\caption{Chiseling off a vertex from a polygon. The set $\{\xx: \langle \bfa_G,\xx\rangle = b_G-\epsilon\}$ is given by the thick line, whereas the other is the dashed line.}
\label{fig:chisel}
\end{figure}
It is easy to see that $G_1, \dots, G_k$ can be simultaneously chiseled off $P$ at a sufficient small distant $\epsilon>0$ if and only if $G_1,\dots,G_k$ are pairwise disjoint. 


\begin{remark}
To chisel a face $G$ of a polytope $P$ correspond to make a \emph{stellar subdivision} on $\Sigma(P)$ along $\ncone(G,P)$ (See \cite[Section III.2]{ewald}). 
\end{remark}

Suppose $\Sigma$ is a projective fan in $W$ such that $0 \in \Sigma.$
The following algorithm gives one way to obtain the barycentric subdivision of $\Sigma$. 
\begin{algorithm}\label{alg:bary}
	\begin{enumerate}
		\item[(0)] 
	Let $P_0=P$ be a $d$-polytope whose normal fan is $\Sigma$. 
	
\item 	Let $\epsilon_1 > 0$ be a sufficiently small number such that we can simultaneously chisel all vertices off $P_0$ at distance $\epsilon_1$, and let $P_1$ be the polytope obtained from $P_0$ by applying these chiselings.

\item 	Let $E_1, \dots, E_m$ be the edges of $P_1$ that come from $P_0$, that is, edges that are not created from the chiselings done in the steps above. 
	Let $\epsilon_2>0$ 
	be a sufficiently small number such that we can simultaneously chisel $E_1, \dots, E_m$ off $P_1$ at distance $\epsilon_2$, and let $P_2$ be the polytope obtained from $P_1$ by applying these chiselings.

\item  \dots

\item[\vdots] 

\item[(d)] Let $F_1, \dots, F_d$ be the $(d-1)$-dimensional faces of $P_{d-1}$ that come from $P_0.$ 
	Let $\epsilon_d >0$ 
	be a sufficiently small number such that we can simultaneously chisel $F_1, \dots, F_n$ off $P_{d-1}$ at distance $\epsilon_d$, and let $P_d$ be the polytope obtained from $P_{d-1}$ by applying these chiselings.
	\end{enumerate}
\end{algorithm}
It follows from \cite[Definition 2.5, Section III.2]{ewald} that the normal fan of $P_d$ obtained by Algorithm \ref{alg:bary} is the \emph{barycentric subdivision} of $\Sigma.$

\begin{remark}
For any $k > 0, $ the set of all $k$-dimensional faces of a polytope $P$ cannot be simultaneously chiseled, since they are not pairwise disjoint. However, they do become pairwise disjoint after $k$ steps of Algorithm \ref{alg:bary}. For instance, the set of all edges of $P_0$ is clearly not pairwise disjoint, but the resulting edges after chiseling all vertices of $P_0$ are pairwise disjoint and can be simultaneously chiseled.
\end{remark}

\begin{remark}
The barycentric subdivision of the normal fan of a polytope should not be confused with the barycentric subdivision of the polytope itself. For any polytope $P$, its barycentric subdivision is a triangulation of $P$, whereas the barycentric subdivision of a fan is again a fan. Furthermore Algorithm \ref{alg:bary} shows that it preserves projectivity.
 \end{remark}
The following lemma, which follows immediately from the construction of $P_d,$ will be useful in our discussion.
\begin{lemma}\label{lem:Pd}
  The resulting polytope $P_d$ (of Algorithm \ref{alg:bary}) is a full-dimensional polytope in the same $d$-dimensional affine space as $P_0,$ and is defined by the following linear system:
\[ \langle \bfa_G, \xx \rangle \le b_G - \epsilon_{\dim(G)+1}, \quad \text{for all nonempty proper faces $G$ of $P$},\]
where $\bfa_G$ and $b_G$ are as defined in Definition \ref{defn:chisel}, and $\epsilon_i$'s are the chiseling distances given in Algorithm \ref{alg:bary}.
\end{lemma}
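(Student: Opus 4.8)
The plan is to prove Lemma \ref{lem:Pd} by induction on the number of chiseling steps performed in Algorithm \ref{alg:bary}, showing after the $k$-th step that $P_k$ is full-dimensional in the same affine space as $P_0$ and is cut out precisely by the inequalities $\langle \bfa_G,\xx\rangle \le b_G$ for all proper faces $G$ with $\dim(G)\ge k$, together with $\langle \bfa_G,\xx\rangle \le b_G-\epsilon_{\dim(G)+1}$ for all proper faces $G$ with $\dim(G) < k$. The base case $k=0$ is just the given facet description of $P_0=P$ (a face $G$ of dimension $d-1$ is a facet, so $\bfa_G$ is a primitive facet normal and $b_G$ the corresponding right-hand side). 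For the inductive step I would unwind Definition \ref{defn:chisel}: passing from $P_{k-1}$ to $P_k$ means intersecting with the halfspaces $\langle \bfa_{G},\xx\rangle\le b_{G}-\epsilon_k$ for each $(k-1)$-dimensional face $G$ of $P_{k-1}$ that descends from $P_0$, and the hypothesis that the chiselings can be done simultaneously at distance $\epsilon_k$ guarantees these new facets are pairwise disjoint, so no further interaction occurs and the resulting inequalities are all facet-defining and tight.

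The one genuinely substantive point, and the step I expect to be the main obstacle, is the bookkeeping of \emph{which} faces of $P_0$ survive as faces of $P_{k-1}$ of the appropriate dimension, and the claim that the chiseling direction and scalar $(\bfa_G, b_G)$ attached to a face $G$ of $P_0$ are unchanged when $G$ is viewed inside the later polytope $P_{k-1}$. Concretely one must check: (i) a proper face $G$ of $P_0$ of dimension $j \ge k-1$ remains a face of $P_{k-1}$ (because chiseling a face $G'$ only destroys faces contained in $G'$, and the faces chiseled in steps $1,\dots,k-1$ all have dimension $< j$ when $j\ge k-1$ — more carefully, the previously created facets are disjoint from $G$ for $\epsilon$'s small enough); (ii) the set of facets of $P_{k-1}$ containing such a $G$ is exactly the set of (relabeled) original facets of $P_0$ containing $G$, so that $\ncone(G,P_{k-1})$ has the same spanning rays $\bfa_1,\dots,\bfa_l$ as $\ncone(G,P_0)$, hence the same $\bfa_G=\sum\bfa_i$; and (iii) since all previously-introduced chisels used distances $\epsilon_1,\dots,\epsilon_{k-1}$ that can be taken arbitrarily small relative to $\epsilon_k$, the maximum $\max_{\xx\in P_{k-1}}\langle\bfa_G,\xx\rangle$ still equals $b_G=\max_{\xx\in P_0}\langle\bfa_G,\xx\rangle$ (the vertices of $P_0$ in $G$, which determine this maximum, are untouched by chisels of higher-dimensional faces — here one uses that $\bfa_G$ is constant on $G$ and that the earlier chisels removed only points near faces $G'$ with $\dim G' < \dim G$, which for small $\epsilon$'s lie strictly below the value $b_G$). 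I would isolate (i)–(iii) as a short preliminary claim and prove it using only the elementary facts about face lattices under stellar subdivision already signposted in the remark citing \cite{ewald}.

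With that claim in hand, the induction closes immediately: $P_k = P_{k-1}\cap\bigcap_{G}\{\langle\bfa_G,\xx\rangle\le b_G-\epsilon_k\}$, the intersection ranging over $(k-1)$-faces $G$ of $P_0$, and each such inequality is facet-defining for $P_k$ because $\epsilon_k$ was chosen so small that the chiseled facets neither coincide nor meet; meanwhile every inequality already present and facet-defining for $P_{k-1}$ remains facet-defining for $P_k$ (its facet loses only a lower-dimensional sliver). Taking $k=d$ gives that $P_d$ is cut out by $\langle\bfa_G,\xx\rangle\le b_G-\epsilon_{\dim(G)+1}$ over \emph{all} nonempty proper faces $G$ of $P$ (when $k=d$ there are no remaining faces $G$ with $\dim(G)\ge d$, since proper faces have dimension at most $d-1$), which is exactly the asserted linear system. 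Finally, $P_d$ is full-dimensional in the affine hull of $P_0$: it is obtained from $P_0$ by intersecting with finitely many halfspaces each of which, by the smallness conditions on the $\epsilon_i$, strictly separates off only a neighborhood of a proper face, so $P_d$ contains an interior point of $P_0$ (e.g. any point far from the boundary), and it lies in the same affine span since all defining functionals $\bfa_G$ are pulled back from $W$ and $P_d\subseteq P_0$. This completes the proof.
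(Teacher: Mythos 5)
Your argument is correct. Note that the paper supplies no proof of this lemma at all --- it is asserted to ``follow immediately from the construction'' --- and your induction on the chiseling steps, with claims (i)--(iii) isolating why a face $G$ of $P_0$ of dimension at least $k-1$ survives into $P_{k-1}$ with the same normal data $(\bfa_G, b_G)$ (because relative-interior points of $G$ satisfy $\langle \bfa_{G'},\xx\rangle < b_{G'}$ strictly for every lower-dimensional face $G'$, hence survive all earlier chisels, and no newly created chiseling facet can contain $G$), is exactly the right way to fill in the omitted details. One inconsequential slip: what is needed is that each $\epsilon_j$ be small in absolute terms relative to the geometry of $P_0$, not that $\epsilon_1,\dots,\epsilon_{k-1}$ be small relative to $\epsilon_k$ --- in Algorithm \ref{alg:bary} the later distances are chosen after, and in the applications smaller than, the earlier ones.
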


We are now ready to state the main result, Theorem \ref{thm:bary2}, of this section, preceded by a classical result, Theorem \ref{thm:bary1}, that is related to Reiner's question,
 Recall that the \emph{standard $d$-simplex} is $\Delta_d := \conv\{ \ee_1, \dots, \ee_{d+1}\}$. Theorem \ref{thm:bary1} follows from Remark 6.6 in \cite{PosReiWil}.
\begin{theorem} \label{thm:bary1}
	The Braid fan $\Br_d$ is the barycentric subdivision of the normal fan $\Sigma(\Delta_d)$ of $\Delta_d$.
\end{theorem}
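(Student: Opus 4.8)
\textbf{Proof proposal for Theorem \ref{thm:bary1}.}

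The plan is to run Algorithm \ref{alg:bary} with $P_0 = \Delta_d$ the standard $d$-simplex and to identify, at each step, the faces being chiseled and the hyperplanes they produce, then match the resulting linear system with the facet description of the Braid fan via Proposition \ref{prop:charbr}. The key structural observation is that the nonempty proper faces $G$ of $\Delta_d = \conv\{\ee_1,\dots,\ee_{d+1}\}$ are in bijection with nonempty proper subsets $S \subsetneq [d+1]$: the face $G_S := \conv\{\ee_i : i \in S^c\}$ is the intersection of the facets $\{x_i = 0\}$ for $i \in S$, so its normal cone $\ncone(G_S,\Delta_d)$ is spanned by the primitive outer normals $-\ee_i$, $i \in S$ (working in $W_d = \RR^{d+1}/\1$, where $-\ee_i$ and $\ee_{[d+1]\setminus i} = \ee_{S^c \cup (S \setminus i)}$ differ by $\1$). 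Hence the chiseling direction of $\Delta_d$ at $G_S$ is $\bfa_{G_S} = \sum_{i \in S} (-\ee_i) \equiv \ee_{S^c}$ in $W_d$, equivalently $\ee_S$ after the identification $\ee_S \equiv -\ee_{S^c}$ modulo $\1$; and crucially $\dim G_S = |S^c| - 1 = d - |S|$, so $\dim G_S + 1 = d+1-|S|$ depends only on $|S|$. This means all faces $G_S$ with $|S|$ fixed get chiseled at the same step of Algorithm \ref{alg:bary} (step $d+1-|S|$... wait — step numbering: vertices are chiseled at step $1$, and vertices of $\Delta_d$ are the $G_S$ with $|S| = d$; so $G_S$ is chiseled at step $d+1-|S|$), consistent with the requirement in the algorithm that we chisel faces of a fixed dimension coming from $P_0$.

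Next I would invoke Lemma \ref{lem:Pd}: after running the algorithm to completion, $P_d$ is the full-dimensional polytope in the affine hull of $\Delta_d$ defined by
\[
\langle \bfa_{G_S}, \xx \rangle \le b_{G_S} - \epsilon_{d+1-|S|}, \qquad \emptyset \neq S \subsetneq [d+1],
\]
and its normal fan is the barycentric subdivision of $\Sigma(\Delta_d)$. The normal fan of $P_d$ has one ray for each such $S$, with ray generator $\bfa_{G_S} \equiv \pm\ee_S$ in $W_d$, and its maximal cones correspond to the maximal flags of faces of $\Delta_d$, i.e. to maximal chains $\emptyset \subsetneq S_1 \subsetneq S_2 \subsetneq \cdots \subsetneq S_d \subsetneq [d+1]$ in $\overline{\BB_{d+1}}$ (this is exactly the combinatorial content of barycentric subdivision — its cells are indexed by chains in the face poset). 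By Proposition \ref{prop:charbr}, the Braid fan $\Br_d$ has precisely these same rays $\ee_S$, $S \in \overline{\BB_{d+1}}$, and its maximal cones are indexed by the same maximal chains. So it remains only to check the two simplicial fans actually coincide as fans (not just combinatorially): since both are complete simplicial fans in $W_d$ with the same set of rays and the same combinatorics of which rays span which cones, and a complete fan is determined by this data, they are equal.

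The remaining gap — and I expect this to be the main obstacle — is the sign/identification bookkeeping: I need to verify carefully that the ray $\bfa_{G_S}$ produced by chiseling points into the correct half of $W_d$, i.e. that $\sum_{i\in S}(-\ee_i)$ really generates the same ray of $\Br_d$ as $\ee_S$, not its negative. Here one uses that in $W_d = \RR^{d+1}/\1$ we have $\sum_{i\in S}(-\ee_i) = -\ee_S = \ee_{[d+1]\setminus S} - \1 \equiv \ee_{S^c}$, and then one must reconcile this with the indexing convention of Proposition \ref{prop:charbr}; since the correspondence $S \leftrightarrow S^c$ is a poset anti-automorphism of $\overline{\BB_{d+1}}$ that sends chains to chains, it induces an automorphism of the abstract simplicial complex underlying $\Br_d$, and one checks it is realized by the map $\ee_S \mapsto -\ee_S = \ee_{S^c}$ on rays — so the fan is genuinely invariant under it and the identification is consistent. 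Alternatively, and perhaps more cleanly, one can cite \cite[Remark 6.6]{PosReiWil} directly as the statement asserts, and present the above only as a sketch of why it holds; but I would prefer to give the self-contained argument via Algorithm \ref{alg:bary} and Lemma \ref{lem:Pd} since all the needed machinery is already in place, and it sets up the proof of Theorem \ref{thm:bary2} (the analogous statement for $\Br_d^2$) in the same language.
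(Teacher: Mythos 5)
Your proposal is correct in substance and begins exactly as the paper does: run Algorithm \ref{alg:bary} on $P_0=\Delta_d$, index the faces by subsets of $[d+1]$, observe that the chiseling direction at the face $\conv\{\ee_i : i\in T\}$ is $\ee_T$ in $W_d$ (since $-\ee_{T^c}\equiv \ee_T$ modulo $\1$), and read off the inequality description of $P_d$ from Lemma \ref{lem:Pd}. Where you diverge is the final step. The paper never compares combinatorics of fans: having obtained $\langle \ee_S,\xx\rangle \le 1-\epsilon_{|S|}$, it uses the decay condition $\epsilon_i<\epsilon_{i-1}/2$ to produce the strictly increasing vector $\balpha=(\epsilon_d,\epsilon_{d-1}-\epsilon_d,\dots,1-\epsilon_1)$ and identifies $P_d$ \emph{as a polytope} with $\Perm(\balpha)$, whence $\Sigma(P_d)=\Br_d$ by Proposition \ref{prop:fanofusual}. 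You instead finish by matching the flag/chain description of the barycentric subdivision against Proposition \ref{prop:charbr} and noting that two simplicial fans with the same rays and the same incidence combinatorics coincide. That argument is valid, but note two things. First, within this paper the barycentric subdivision is \emph{defined} as $\Sigma(P_d)$ for the output of Algorithm \ref{alg:bary}, so the claim that its cones are spanned by the chiseling directions of flags of faces is an external fact (it is the standard stellar-subdivision description, citable to Ewald, but it is not established here); the paper's route avoids needing it by computing $P_d$ explicitly. Second, your version never uses the conditions \eqref{equ:tiny} on the $\epsilon_i$'s, which is consistent with the fact that the combinatorics of iterated stellar subdivision is independent of the (valid) chiseling distances, but it means you lose the explicit identification $P_d=\Perm(\balpha)$ that the paper reuses verbatim in the proof of Theorem \ref{thm:bary2}. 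Finally, your closing paragraph on ``sign bookkeeping'' is more complicated than necessary: $\sum_{i\in S}(-\ee_i)$ equals $\ee_{S^c}$ exactly in $W_d$, not merely up to sign, so the set of chiseling directions is exactly $\{\ee_T : T\in\overline{\BB_{d+1}}\}$ and containment of faces corresponds to containment of the indexing sets; no appeal to the complementation anti-automorphism of $\overline{\BB_{d+1}}$ is needed.
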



\begin{theorem}\label{thm:bary2}
	The nested Braid fan $\Br_d^2$ is the barycentric subdivision of the Braid fan $\Br_d,$ and thus is the second barycentric subdivision of $\Sigma(\Delta_d).$
\end{theorem}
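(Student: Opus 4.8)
The plan is to apply Algorithm \ref{alg:bary} to a polytope whose normal fan is $\Br_d$, and show the resulting polytope $P_d$ has normal fan equal to $\Br_d^2$. Since $\Br_d$ is the normal fan of the regular permutohedron $\Pi_d$ (Proposition \ref{prop:fanofusual}), we take $P_0 = \Pi_d$, or rather its centralized version $\widetilde{\Pi_d} \subset V_d$, so everything lives in the right ambient space. By Lemma \ref{lem:Pd}, the polytope $P_d$ produced by the algorithm is defined by the system $\langle \bfa_G, \xx \rangle \le b_G - \epsilon_{\dim(G)+1}$ ranging over all nonempty proper faces $G$ of $\widetilde{\Pi_d}$, where $\bfa_G$ is the sum of the primitive outer normals of the facets containing $G$. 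The key observation is that the faces of $\widetilde{\Pi_d}$ are indexed by chains in $\overline{\BB_{d+1}}$ (Proposition \ref{prop:charbr}): a face $G$ corresponds to the chain $S_1 \subsetneq S_2 \subsetneq \cdots \subsetneq S_k$ such that $\ncone(G, \widetilde{\Pi_d})$ is spanned by $\ee_{S_1}, \dots, \ee_{S_k}$. Then $\bfa_G = \ee_{S_1} + \ee_{S_2} + \cdots + \ee_{S_k}$, and the crucial computation is to check that this equals $\ee_\cT$ for the ordered set partition $\cT = (S_1, S_2 \setminus S_1, \dots, S_k \setminus S_{k-1}, [d+1]\setminus S_k)$ — indeed $\sum_{j=1}^k \ee_{S_j} = \sum_i (\text{number of } S_j \text{ containing } i)\, \ee_i$, and an element in the $m$-th block of $\cT$ lies in exactly $S_m, S_{m+1}, \dots, S_k$, i.e. in $k-m+1$ of them. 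After reindexing (replacing $i$ by $k+1-i$, i.e. using the reversal of $\cT$), this matches the definition $\ee_\cT = \sum_i i\, \ee_{S_i}$ from \eqref{equ:defneT}. So the chiseling normals are exactly the rays of $\Br_d^2$ described in Proposition \ref{prop:charbr2}, modulo the bijection between proper faces of $\widetilde{\Pi_d}$ (chains in $\overline{\BB_{d+1}}$) and elements of $\overline{\OO_{d+1}}$ induced by Lemma \ref{lem:localbool}-type reasoning — or more directly, between chains in $\overline{\BB_{d+1}}$ and elements of $\overline{\OO_{d+1}}$.

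Next I would verify that $P_d$ has the right combinatorial structure, i.e. that its normal fan is not merely refined by $\Br_d^2$ but equals it. One clean way: Algorithm \ref{alg:bary} produces, by \cite[Definition 2.5, Section III.2]{ewald} (already invoked in the excerpt just after the algorithm), the barycentric subdivision of $\Sigma(\Pi_d) = \Br_d$; so it suffices to identify this abstract barycentric subdivision with $\Br_d^2$. The barycentric subdivision of a simplicial fan $\Sigma$ has rays indexed by the cones of $\Sigma$ (other than $0$ and, here, the full space which is not present), with the ray for a cone $\sigma$ pointing in the direction of the sum of the primitive generators of $\sigma$; and a set of such rays spans a cone iff the corresponding cones of $\Sigma$ form a chain under inclusion. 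By Proposition \ref{prop:charbr}, cones of $\Br_d$ correspond to chains in $\overline{\BB_{d+1}}$, their "barycenter" rays are the $\bfa_G = \sum \ee_{S_j}$ computed above, and chains-of-chains in $\overline{\BB_{d+1}}$ ordered by inclusion correspond exactly to chains in $\overline{\OO_{d+1}}$ (a chain of chains of subsets, each a sub-chain of the next, encodes a sequence of successively finer ordered set partitions). This matches Proposition \ref{prop:charbr2} precisely: rays indexed by $\overline{\OO_{d+1}}$, with ray $\ee_\cT$, and $k$-dimensional cones given by $k$-chains in $\overline{\OO_{d+1}}$. So the abstract barycentric subdivision of $\Br_d$ is, ray-for-ray and cone-for-cone, the fan $\Br_d^2$.

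The step I expect to be the main obstacle is making the identification of directions fully rigorous — specifically, reconciling the "sum of primitive generators" convention for barycentric subdivision rays with the explicit formula $\ee_\cT = \sum_i i \ee_{S_i}$ in \eqref{equ:defneT}, and confirming these agree as rays (i.e. up to positive scaling) in $W_d$. One must be careful that $\ee_{[d+1]} = \1 = 0$ in $W_d$, so the "block count" and the "weight $i$" descriptions of $\ee_\cT$ differ by a multiple of $\1$, hence define the same element of $W_d$; this is exactly the reindexing $i \mapsto k+1-i$ (reversal of the partition) combined with killing $\1$. I would state this as a short lemma: for a chain $S_1 \subsetneq \cdots \subsetneq S_k$ in $\overline{\BB_{d+1}}$ with associated refinement-chain bottom element $\cT \in \overline{\OO_{d+1}}$, one has $\sum_{j=1}^k \ee_{S_j} = \ee_\cT$ in $W_d$. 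Granting this lemma and the structural bijection between chains-in-$\overline{\BB_{d+1}}$-ordered-by-inclusion and chains-in-$\overline{\OO_{d+1}}$, the theorem follows: $\Br_d^2$ and the barycentric subdivision of $\Br_d$ have the same rays and the same collection of cones, hence are equal as fans. The second assertion, that $\Br_d^2$ is the second barycentric subdivision of $\Sigma(\Delta_d)$, is then immediate by composing with Theorem \ref{thm:bary1}.
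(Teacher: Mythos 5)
Your proposal is correct, and its first half coincides with the paper's: the paper also chisels $\Pi_d$ and proves your ``short lemma'' as Lemma \ref{lem:bij}, namely that the chiseling direction at the face corresponding to the chain $S_1\subsetneq\cdots\subsetneq S_k$ equals $\ee_\cT$ in $W_d$ for the ordered partition $\cT=[d+1]\setminus S_k\,|\,S_k\setminus S_{k-1}\,|\cdots|\,S_1$ (your reversal bookkeeping is exactly this). Where you diverge is the second half. The paper never argues at the level of the abstract barycentric subdivision: it computes the right-hand sides $b_\cT-\epsilon_{d-k+1}$ explicitly, uses the decay condition \eqref{equ:tiny2} to check that $\bbeta=(\epsilon_2-\epsilon_1,\dots,-\epsilon_d)$ is strictly increasing with $(M,N)=(1,1)$ an appropriate choice, and invokes Theorem \ref{thm:facetdes} to conclude that $P_d$ is literally the usual nested permutohedron $\Perm(\balpha,\bbeta;1,1)$, whence $\Sigma(P_d)=\Br_d^2$ by Proposition \ref{prop:fanofusual2}. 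You instead identify the barycentric subdivision of $\Br_d$ with $\Br_d^2$ combinatorially, matching rays (via the $\ee_\cT$ computation) and cones (chains of cones of $\Br_d$ $\leftrightarrow$ chains of chains in $\overline{\BB_{d+1}}$ $\leftrightarrow$ chains in $\overline{\OO_{d+1}}$, then Proposition \ref{prop:charbr2}). Your route is shorter and makes the combinatorial content more transparent, but it leans on a description of the barycentric subdivision of a simplicial fan --- rays are the barycenters of the cones, and cones correspond to chains of cones under inclusion --- that the paper never states or proves; the paper cites \cite[Section III.2]{ewald} only for the single fact that Algorithm \ref{alg:bary} outputs the barycentric subdivision, and otherwise works entirely with the inequality description of $P_d$ from Lemma \ref{lem:Pd}. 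So if you take that combinatorial description as known (it is standard), your argument is complete and you even avoid needing the explicit bound \eqref{equ:tiny2}; if you are only willing to grant Ewald's statement in the form the paper uses it, that description is the one unproved step, and the paper's detour through Theorem \ref{thm:facetdes} is precisely what fills it, with the added payoff of realizing the barycentric subdivision as the normal fan of a concrete usual nested permutohedron.
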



As a warmup, we give a proof for Theorem \ref{thm:bary1} in which we use well-known facts about faces of the standard simplices.
\begin{proof}[Proof of Theorem \ref{thm:bary1}]
We apply Algorithm \ref{alg:bary} to $P_0 = \Delta_d$, making sure that the chiseling distances $\epsilon_i$'s satisfy:
\begin{equation}
	\epsilon_1 < \frac{1}{2}, \quad \text{and for $2 \le i \le d,$} \quad \epsilon_i < \frac{\epsilon_{i-1}}{2}.
\label{equ:tiny}
\end{equation}
It is sufficient to show that the resulting polytope $P_d$ is a usual permutohedron. In order to do this, we will apply Lemma \ref{lem:Pd} to find an inequality description for $P_d.$

	First, note that $\Delta_d$ is a full-dimensional polytope in the affine space
	\[ 	\langle \ee_{[d+1]}, \xx \rangle = \langle \1, \xx \rangle \ = \ 1.\]
Next, faces of $\Delta_d$ are naturally indexed by subsets of $[d+1]$. For each $S \subseteq [d+1]$, the corresponding face is $\conv\{\ee_i: i\in S\}$, which we denote by $G_S.$
Recall that for any $\emptyset \neq S \subsetneq [d+1],$ the normal cone of $\Delta_d$ at $G_S$ is generated by $\{-\ee_j:j\notin I\}$. Hence, the chiseling direction of $\Delta_d$ at $G_S$ is $\sum_{j\notin I} -\ee_j$. As the normal fan is defined in $W_d$, where $\ee_{[d+1]} = \sum_{i\in[d+1]} \ee_i=0$, this chiseling direction can be written as $\ee_S=\sum_{i\in S} \ee_i$. Finally, let \[ b_S := \max_{\xx \in \Delta_d} \langle \ee_S, \xx \rangle = 1.\] 

Therefore, by Lemma \ref{lem:Pd}, the polytope $P_d$ is given by the following linear system:
\begin{equation} \label{eq:chiselex}
		\langle \ee_{[d+1]}, \xx \rangle = \langle \1, \xx \rangle \ = \ 1, \quad \text{and} \quad 
		\langle \ee_S, \xx \rangle \ \le \ b_S-\epsilon_{\dim(G_S)+1}=1-\epsilon_{|S|}, \quad \forall \emptyset \neq S \subsetneq [d+1].
	\end{equation}
	It follows from \eqref{equ:tiny} that 
\begin{equation}
 \epsilon_d < \epsilon_{d-1}-\epsilon_d < \epsilon_{d-2}-\epsilon_{d-1} < \cdots < \epsilon_1- \epsilon_2 < 1 - \epsilon_1,
	\label{equ:tiny1}
\end{equation}
using which one can show that the right hand side of \eqref{eq:chiselex} is a submodular function, so that Theorem \ref{thm:submodular} applies. More directly, one can show that $P_d$ is the usual permutohedron $\Perm(\balpha)$ with
\[ \balpha = \Big(\epsilon_d, \epsilon_{d-1}-\epsilon_d, \epsilon_{d-2}-\epsilon_{d-1}, \dots, \epsilon_1- \epsilon_2, 1 - \epsilon_1\Big).\]
\end{proof}

We are going to prove Theorem \ref{thm:bary2} in a parallel fashion. Before that, we give the following preliminary lemma.
\begin{lemma}\label{lem:bij}
	There exists a one-to-one correspondence between $(d-k)$-faces of $\Pi_d$ and ordered set partitions with $k+1$ parts in $\overline{\OO_{d+1}}$ such that if we let $G_\cT$ be the face corresponds to the ordered set partition $\cT,$ then the chiseling direction of $\Pi_d$ at $G_\cT$ is $\ee_\cT.$ 
\end{lemma}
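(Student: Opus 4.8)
The plan is to establish the bijection combinatorially by describing faces of the regular permutohedron $\Pi_d$ in terms of ordered set partitions, and then verify that the chiseling direction matches $\ee_\cT$. This is essentially a classical fact about permutohedra dressed in the notation of ordered set partitions, so the work is mostly bookkeeping.

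First I would recall the classical description of the face lattice of $\Pi_d$. Since $\Br_d = \Sigma(\Pi_d)$ (Proposition \ref{prop:fanofusual}), faces of $\Pi_d$ are in order-reversing bijection with cones of $\Br_d$, which by Proposition \ref{prop:charbr} are indexed by chains in $\overline{\BB_{d+1}}$. A chain $\emptyset \subsetneq S_1 \subsetneq S_2 \subsetneq \cdots \subsetneq S_k \subsetneq [d+1]$ of length $k$ corresponds to a cone of dimension $k$, hence to a face of $\Pi_d$ of dimension $d-k$. Now a chain of subsets is the same data as an ordered set partition with $k+1$ parts: given the chain, set $T_1 = S_1$, $T_i = S_i \setminus S_{i-1}$ for $2 \le i \le k$, and $T_{k+1} = [d+1]\setminus S_k$, producing $\cT = (T_1, \dots, T_{k+1}) \in \overline{\OO_{d+1}}$; conversely the partial sums of an ordered set partition recover the chain. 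This gives the claimed bijection $\cT \mapsto G_\cT$ between $(d-k)$-faces of $\Pi_d$ and ordered set partitions with $k+1$ parts. (One should note: I want the face $G_\cT$ to be the one whose normal cone $\ncone(G_\cT, \Pi_d)$ is spanned by the rays $\ee_{S_1}, \dots, \ee_{S_k}$ of $\Br_d$, which is exactly the cone associated to the chain by Proposition \ref{prop:charbr}.)

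Next I would compute the chiseling direction of $\Pi_d$ at $G_\cT$. By Definition \ref{defn:chisel}, the chiseling direction is the sum of the primitive outer normals of the facets containing $G_\cT$, equivalently the sum of the spanning rays of $\ncone(G_\cT, \Pi_d)$. By the previous paragraph these rays are $\ee_{S_1}, \dots, \ee_{S_k}$ where $S_i = T_1 \cup \cdots \cup T_i$. So the chiseling direction is
\[
\bfa_{G_\cT} = \sum_{i=1}^k \ee_{S_i} = \sum_{i=1}^k \sum_{j=1}^i \ee_{T_j} = \sum_{j=1}^{k} (k+1-j)\, \ee_{T_j}.
\]
On the other hand, by definition \eqref{equ:defneT}, $\ee_\cT = \sum_{i=1}^{k+1} i\, \ee_{T_i} = \sum_{j=1}^{k+1} j\, \ee_{T_j}$. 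These differ, but only by a multiple of $\ee_{[d+1]} = \sum_j \ee_{T_j}$, which is $0$ in $W_d$: indeed $\ee_\cT - \bfa_{G_\cT} = \sum_{j=1}^{k+1}(j - (k+1-j))\ee_{T_j}$ plus the missing $j=k+1$ term of $\bfa_{G_\cT}$... so more carefully, $\bfa_{G_\cT} = \sum_{j=1}^{k}(k+1-j)\ee_{T_j}$ and adding $0\cdot \ee_{T_{k+1}}$ we may write $\bfa_{G_\cT} = (k+1)\ee_{[d+1]} - \sum_{j=1}^{k+1} j\, \ee_{T_j} = (k+1)\cdot 0 - \ee_\cT = -\ee_\cT$ in $W_d$. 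Hmm — this shows $\bfa_{G_\cT} = -\ee_\cT$, not $\ee_\cT$, so I must instead take $G_\cT$ to be the face with normal cone spanned by $-\ee_{S_1},\dots,-\ee_{S_k}$, i.e.\ match the \emph{inner} normal description so that the outer normals sum to $\ee_\cT$; equivalently I choose the order-reversing-vs-order-preserving convention so the sign works out. The bijection is the same up to this convention, and the verification is the display above read with the correct signs.

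The main obstacle — really the only subtle point — is pinning down the sign/orientation convention: the normal fan uses outer normals, the cone of $\Br_d$ containing the vertex $v_\pi^\balpha$ is the \emph{normal cone} (outer), but the rays $\ee_S$ of $\Br_d$ as listed in Proposition \ref{prop:charbr} need to be correctly identified as outer normals of the facets of $\Pi_d$ containing $G_\cT$, not inner ones. Once that is fixed consistently (I would fix it by checking a single small case, e.g.\ a vertex $v = v_{\mathrm{id}}^{(1,\dots,d+1)}$ of $\Pi_d$, whose supporting facets have outer normals $\ee_{\{1\}}, \ee_{\{1,2\}}, \dots, \ee_{[d]}$, giving chiseling direction $\sum_{i=1}^d \ee_{[i]}$, and comparing with $\ee_{\cT(\pi)}$ for the corresponding minimal ordered set partition), the rest is the straightforward computation above, and the telescoping identity $\sum_{i=1}^k \ee_{S_i} = \sum_j (k+1-j)\ee_{T_j} \equiv \ee_\cT \pmod{\ee_{[d+1]}}$ in $W_d$ completes the proof.
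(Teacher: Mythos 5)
Your approach is the same as the paper's: identify $(d-k)$-faces of $\Pi_d$ with $k$-chains in $\overline{\BB_{d+1}}$ via Propositions \ref{prop:fanofusual} and \ref{prop:charbr}, convert chains to ordered set partitions, and compute the chiseling direction as the sum of the spanning rays of the normal cone, reducing modulo $\ee_{[d+1]}=0$ in $W_d$. The one substantive point you leave unresolved is resolved in the paper exactly as you guess: the correspondence must be taken in the \emph{reversed} orientation, $T_1=[d+1]\setminus S_k,\ T_2=S_k\setminus S_{k-1},\dots,T_{k+1}=S_1$, so that $\ee_{S_i}=\sum_{j=k+2-i}^{k+1}\ee_{T_j}$ and $\sum_{i=1}^k\ee_{S_i}=\sum_{j=2}^{k+1}(j-1)\ee_{T_j}\equiv\ee_\cT$. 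This is not an inner-versus-outer-normal issue --- the rays $\ee_S$ of $\Br_d$ \emph{are} the outer facet normals --- it is purely a choice of which end of the chain maps to which end of the partition. Be warned that your proposed sanity check is itself miscalibrated: the supporting facets of the vertex $v_{\mathrm{id}}^{(1,\dots,d+1)}=(1,2,\dots,d+1)$ have outer normals $\ee_{\{2,\dots,d+1\}},\ee_{\{3,\dots,d+1\}},\dots,\ee_{\{d+1\}}$ (the functional $\sum_{j>1}x_j$ is maximized where $x_1$ is smallest), not $\ee_{\{1\}},\dots,\ee_{[d]}$, which are the outer normals at the reversed vertex $(d+1,d,\dots,1)$; running your check with the normals as you wrote them would reproduce the spurious minus sign rather than eliminate it.
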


\begin{proof}
	It follows from Propositions \ref{prop:fanofusual} and \ref{prop:charbr} that each $(d-k)$-dimensional face $G$ of $\Pi_d$ corresponds with a $k$-chain in $\overline{\BB_{d+1}}:$
	\begin{equation}\label{equ:kchain} 
		\emptyset \subsetneq S_1 \subsetneq S_2 \subsetneq \cdots \subsetneq S_k \subsetneq [d+1] 
	\end{equation}
	such that the normal cone of $G$ is spanned by $\ee_{S_1}, \ee_{S_2}, \cdots, \ee_{S_k}.$

	For each $k$-chain in the form of \eqref{equ:kchain}, we associate with it the ordered set partition $\cT = T_1 | T_2 | \cdots | T_k | T_{k+1}$, where 
	\[ T_1 = [d+1] \setminus S_k, \quad T_2 = S_k \setminus S_{k-1}, \quad \dots, \quad T_{k} = S_2\setminus S_1, \quad T_{k+1} = S_1.\]
	One sees that this established a bijection between $k$-chains in $\overline{\BB_{d+1}}$ and ordered set partitions in $\overline{\OO_{d+1}},$ and hence induces a bijection between nonempty proper faces of $\Pi_d$ and ordered set partitions in $\overline{\OO_{d+1}}$. Furthermore, suppose $G$ is in bijection with ordered set partition $\cT$ through the $k$-chain \eqref{equ:kchain}. Then the chiseling direction of $\Pi_d$ at $G$ is
	\[ \sum_{i=1}^k \ee_{S_i} = \sum_{i=1}^k \sum_{j=k+2-i}^{k+1} \ee_{T_j} = \sum_{j=2}^{k+1} (j-1) \ee_{S_j}.\]
	As the normal fan is defined in $W_d$, where $\ee_{[d+1]}=\sum_{i\in[d+1]} \ee_i=0$, the above chiseling direction can be written as 
	\[ \sum_{j=2}^{k+1} (j-1) \ee_{S_j} + \ee_{[d+1]} = \sum_{j=1}^{k+1} j \ee_{S_j} = \ee_{\cT}.\]
\end{proof}

\begin{proof}[Proof of Theorem \ref{thm:bary2}]
	We apply Algorithm \ref{alg:bary} to $P_0 = \Pi_d$, making sure that the chiseling distances $\epsilon_i$'s satisfy
	\begin{equation}
	\epsilon_1 < \frac{1}{4}, \quad \text{and for $2 \le i \le d,$} \quad \epsilon_i < \frac{\epsilon_{i-1}}{2}.
	\label{equ:tiny2}
\end{equation}
Similar to the proof of Theorem \ref{thm:bary1}, we will show $P_d$ is a usual nested permutohedron. 

	First, $\Pi_d$ is a full-dimensional polytope in the affine space
	\[ 	\langle \ee_{[d+1]}, \xx \rangle = \langle \1, \xx \rangle \ = \ \sum_{j=1}^{d+1} j =: b_{[d+1]}.\]
	Next, for each $\cT \in \overline{\OO_{d+1}},$ let $G_\cT$ be its corresponding face of $\Pi_d$ assumed by Lemma \ref{lem:bij}. Then the chiseling direction of $G_\cT$ at $\Pi_d$ is $\ee_\cT.$ By Lemma \ref{lem:Pd}, the polytope $P_d$ is defined by the linear system: 
	\begin{equation} 
		\langle \ee_{[d+1]}, \xx \rangle = \langle \1, \xx \rangle \ = \ b_{[d+1]}, \quad \text{and} \quad 
		\langle \ee_\cT, \xx \rangle \ \le \ b_\cT - \epsilon_{d-k+1}, \quad \forall \cT \in \overline{\OO_{d+1}}, 
	\end{equation}
	where $b_{\cT} := \max_{\xx \in \Pi_d} \langle \ee_{\cT}, \xx \rangle.$
	Suppose $\Type(\cT)=(t_0, t_1,t_2,\dots, t_k, t_{k+1})$ (see Defintion \ref{defn:type} for the definition of structure type). Then we can compute that
	\[ b_{\cT} =  \left( \sum_{i=1}^{k+1} i \sum_{j=t_{i-1}+1}^{t_i} j\right).\]
	Note the above formula for $b_{\cT}$ not only works for $\cT \in \overline{\OO_{d+1}}$, but also works for $\cT = [d+1].$

	It follows from \eqref{equ:tiny2} that 
	\[
		0 < \epsilon_d < \epsilon_{d-1}-\epsilon_d < \epsilon_{d-2}-\epsilon_{d-1} < \cdots < \epsilon_1- \epsilon_2 < \epsilon_1 < \frac{1}{4}.
	\]
	Hence,
	\[ \bbeta := \Big( \epsilon_2-\epsilon_1, \epsilon_3 - \epsilon_2, \dots, \epsilon_d-\epsilon_{d-1}, -\epsilon_d\Big)\]
	is a strictly increasing sequence, where the absolute value of each entry is strictly smaller than $\frac{1}{4}.$ Therefore, letting $\balpha:=(1,2,\dots, d+1),$ one checks that $(M,N)=(1,1)$ is an appropriate choice for $(\balpha,\bbeta).$ Thus, it follows from Theorem \ref{thm:facetdes} that $P_d$ is the usual nested permutohedron $\Perm(\balpha, \bbeta;1,1).$
\end{proof}

\section{Questions}\label{sec:question}

We finish with questions that might be of interest for future research.

\begin{enumerate}[leftmargin=*]
\item Notice that in the case of the usual permutohedron $\Perm(\balpha),$ we always have $v_\pi^\balpha \in C(\pi)$ for each $\pi$, which is a property that makes notation natural. It is not always the case that for a usual nested permutohedron $\Perm(\balpha,\bbeta; M,N)$ we have $v_{\pi,\tau}^{(\balpha,\bbeta)(M,N)}\in C(\pi,\tau)$ for each $(\pi,\tau)$. Does there exist any usual or generalized nested permutohedron with this property?
\item Is there a way to realize the permuto-associahedron \cite[Lecture 9.3]{zie} as a deformation of the regular nested permutohedron?
\item The nested Braid fan was defined by grouping together points with the same relative order of coordinates and their first differences. One could go beyond and consider second differences, but this is \textbf{not} a subsequent barycentric subdivision. Is this ``doubly" nested Braid fan a projective fan?
\item The barycentric subdivision of a fan is obtained from stellar subdivisions in a particular order. If not done in the correct order the resulting fan is different. Which sequences of stellar subdivisions of $\Sigma(\Delta_d)$ give coarsenings of $\Br_d$?
\item As mentioned before, one of the motivations of this paper was to define and study a class of polytopes whose edges are parallel to directions in the form of $\ee_i+\ee_j-\ee_k-\ee_\ell$. The most direct way would be to first construct a fan from the hyperplane arrangement given by $x_i+x_j=x_k+x_\ell$ for all tuples $(i,j,k,\ell)$, including those with repeated elements, and then define a family of polytopes whose normal fans coarsen this new fan. One issue that arises is that this hyperplane arrangement is not simplicial. How many regions does it have? Do they have a combinatorial interpretation?
\end{enumerate}

\appendix
\section{Normal cones and projective fans}\label{apd:normal}

We will give a proof for Proposition \ref{prop:deform}, proceeded by 
definitions of normal cones and normal fans.
Recall that $W$ is the dual space of $V.$ Thus, any $\ww \in W$ can be considered as a linear functional on $V.$
\begin{definition}\label{defn:normal}
Suppose $P$ is a polytope in an affine space that is a translation of $V.$ Given a face $F$ of a polytope $P$, we define the \emph{normal cone} of $P$ at $F$:
\[
\ncone(F, P) 
:= \left\{ \ww \in W: \quad \langle \ww, \xx \rangle \geq \langle \ww, \yy \rangle, \quad \forall \xx \in F,\quad \forall \yy \in P \right\}.
\]
Therefore, $\ncone(F,P)$ is the collection of linear functionals $\ww$ in $W$ such that $\ww$ attains maximum value at $F$ over all points in $P.$

The \emph{normal fan} of $P$, denoted by $\Sigma(P),$ is the collection of all normal cones of $P$ as we range over all faces of $P$. 
\end{definition}

Since any linear functional $\ww \in W$ attains its maximum on some face of $P$, normal fans are always complete.

A fan $\Sigma'$ is a \emph{coarsening} of another fan $\Sigma$ if any cone in $\Sigma'$ is the union of a set of cones in $\Sigma.$ One can check that $\Sigma'$ is a coarsening of $\Sigma$ if and only if any maximal cone in $\Sigma'$ is the union of a set of maximal cones in $\Sigma.$

\begin{proof}[Proof of Proposition \ref{prop:deform}]
			Suppose $Q$ is a deformation of $P_0$. Then there exists $\bb \in \RR^m$ such that conditions \ref{item:pts} and \ref{item:nopass} of Definition \ref{defn:deform0} are satisfied. 
			Let $v, u$ be described as in condition \ref{item:nopass}. Then each of $\bfa_{i_1},\dots, \bfa_{i_k}$ attains maximum value at $u$ over all points in $Q,$ and thus is in $\ncone(u, Q).$ As $\bfa_{i_1},\dots, \bfa_{i_k}$ are spanning rays of $\ncone(v,P_0),$ we conclude that any maximal cone of $\Sigma(P_0)$ is a subset of some maximal cone of $\Sigma(Q)$. Since both $\Sigma(P_0)$ and $\Sigma(Q)$ are complete fans, we conclude that any maximal cone of $\Sigma(Q)$ is the union of a set of maximal cones in $\Sigma(P_0).$ Hence, $\Sigma(Q)$ is a coarsening of $\Sigma(P_0).$

Suppose $\Sigma(Q)$ is a coarsening of $\Sigma(P_0).$ 
	Let $\bb = (b_i)_{i=1}^m$ where 
\begin{equation}
b_i := \max_{\xx \in Q} \langle \bfa_i, \xx \rangle.
\label{equ:bi}
\end{equation}
We will show that $Q$ is the deformation of $P$ with the deforming vector $\bb$
by proving conditions \ref{item:pts} and \ref{item:nopass} of Definition \ref{defn:deform0} are satisfied.

Let $v$ be a vertex of $P_0$ that lies on facets $F_{i_1}, F_{i_2}, \dots, F_{i_k}$ of $P_0.$ Then $\{ \bfa_{i_1}, \dots, \bfa_{i_k}\}$ are the generating rays for $\ncone(v,P_0)$ and thus they belong to $\ncone(u, Q)$ for some vertex $u$ of $Q$. Therefore, 
\[
	\langle \bfa_{i_j}, u \rangle = \max_{\xx \in Q} \langle \bfa_{i_j}, \xx \rangle = b_{i_j}, \quad \forall 1 \le i \le k.
\]
Hence, condition \ref{item:nopass} of Definition \ref{defn:deform0} follows. 

Finally, let	\[ Q' := \{ \xx \in V \ : \ \A \xx \le \bb \}.\]
It is left to show that $Q' = Q.$ One sees that it is enough to show that any vertex $u$ of $Q$ is a vertex $Q',$ and $\ncone(u, Q) \subseteq \ncone(u,Q').$ Since $\Sigma(Q)$ is a coarsening of $\Sigma(P_0),$ we see any vertex $u$ of $Q$ arises in the way described in condition \ref{item:nopass}. Hence, $u$ is the intersection of hyperplanes determined by taking equalities of a subset of inequalities in $\A \xx \le \bb$ and satisfies the remaining inequalities. This implies $u$ is a vertex of $Q'.$ Let $R_u := \{ \bfa_{j_1}, \dots, \bfa_{j_t}\} \subseteq \{ \bfa_1, \dots, \bfa_m\}$ be the set of rays in $\ncone(u,Q).$ Then $\ncone(u,Q)$ is generated by rays in $R_u,$ and
\[ \langle \bfa_{j_s}, u \rangle = \max_{\xx \in Q} \langle \bfa_{j_s}, \xx \rangle = b_{j_s}, \quad \forall 1 \le s \le t.\]
Since $Q'$ is defined by $\A \xx \le \bb,$ this implies $\langle \bfa_{j_s}, \xx \rangle \le b_{j_s}$ for all $\xx \in Q$. Thus, $\langle \bfa_{j_s}, u \rangle = \max_{\xx \in Q'} \langle \bfa_{j_s}, \xx \rangle$ and so $\bfa_{j_s}$ belongs to $\ncone(u,Q')$ for each $1 \le s \le t.$ It follows that $\ncone(u,Q) \subseteq \ncone(u,Q').$
\end{proof}

\bibliography{biblio}
\bibliographystyle{plain}

\end{document}